	\setlist{nosep} \usepackage{color}
\DeclareMathOperator{\Rea}{\textnormal{Re}}
\DeclareMathOperator{\Ima}{\textnormal{Im}}
\newcommand{\TheTitle}{Necessary Conditions and Tight Two-level Convergence\\Bounds for Parareal and Multigrid Reduction in Time}
\newcommand{\TheAuthors}{Ben S. Southworth}
\title{{\TheTitle}\thanks{This work was  performed under the auspices of the U.S. Department of Energy under 
grant number (NNSA) DE-NA0002376 and Lawrence Livermore National Laboratory under 
contracts B614452 and B627942.}}
\author{  Ben~S.~Southworth
  \thanks{Department of Applied Mathematics,
          University of Colorado at Boulder
          (\email{ben.s.southworth@gmail.com}).}
}
\ifpdf\hypersetup{  pdftitle={\TheTitle},
  pdfauthor={\TheAuthors}
}
\begin{document}
\maketitle

\begin{abstract}
Parareal and multigrid reduction in time (MGRiT) are two of the most popular parallel-in-time methods. The basic idea is to
treat time integration in a parallel context by using a multigrid method in time. If $\Phi$ is the (fine-grid) time-stepping scheme
of interest, such as RK4, then let $\Psi$ denote a ``coarse-grid'' time-stepping scheme chosen to approximate $k$ steps of
$\Phi$, where $k\geq 1$. In particular, $\Psi$ defines the coarse-grid correction, and evaluating $\Psi$ should be (significantly)
cheaper than evaluating $\Phi^k$. Parareal is a two-level method with a fixed relaxation scheme, and MGRiT is a
generalization to the multilevel setting, with the additional option of a modified, stronger relaxation scheme. 

A number of papers have studied the convergence of Parareal and MGRiT. However, there have yet to be general conditions
developed on the convergence of Parareal or MGRiT that answer simple questions such as, (i) for a given $\Phi$ and $k$,
what is the best $\Psi$, or (ii) can Parareal/MGRiT converge for my problem? This work derives necessary and sufficient
conditions for the convergence of Parareal and MGRiT applied to linear problems, along with tight two-level convergence bounds,
under minimal additional assumptions on $\Phi$ and $\Psi$. Results all rest on the introduction of a \textit{temporal approximation property}
(TAP) that indicates how $\Phi^k$ must approximate the action of $\Psi$ on different vectors. Loosely, for unitarily diagonalizable
operators, the TAP indicates that the fine-grid and coarse-grid time integration schemes must integrate geometrically
smooth spatial components similarly, and less so for geometrically high frequency. In the (non-unitarily) diagonalizable setting,
the conditioning of each eigenvector, $\mathbf{v}_i$, must also be reflected in how well $\Psi\mathbf{v}_i \sim\Phi^k\mathbf{v}_i$. 
In general, worst-case convergence bounds are exactly given by $\min \varphi < 1$ such that an inequality along the lines of
$\|(\Psi-\Phi^k)\mathbf{v}\| \leq\varphi \|(I - \Psi)\mathbf{v}\|$ holds for all $\mathbf{v}$. Such inequalites are formalized as 
different realizations of the TAP in Section 2, and form the basis for convergence of MGRiT and Parareal applied
to linear problems.
\end{abstract}

\section{Introduction} \label{sec:intro}

Efficiently distributing computational work over many processors, or parallelizing, is fundamental to running large-scale
numerical simulations. In the case of partial differential equations (PDEs) in space and time, problems are at least 3-4
dimensional, and as many as seven or more for problems such as radiative transport. Additionally accounting for
multiple variables that may have to be solved for, even a moderate number of points in each dimension requires a massive
number of unknowns, as well as a high level of parallelism, to obtain an accurate solution. Furthermore, computational
power is largely increasing in the number of processors available and less in the power of individual processors, making
increased parallelism an important area of research.

Steady state PDEs (non-time-dependent) are typically posed as boundary value problems (BVPs), which provide
a natural mechanism to parallelize in space. When time derivates are introduced, adding parallelism in the time
dimension is more complicated. In particular, for time-dependent
PDEs, it is often the case that only an initial value in time is given. To that end, propagating information through
the temporal domain appears to be an inherently sequential process because the initial information can only
be propagated in one direction, namely forward in time. This is how most time-dependent PDEs are solved -- 
given some initial value in time, a BVP is formulated and discretized in the spatial domain. The IVP is then
propagated forward one time step by solving the BVP and applying some time integration routine, and
the process repeats based on a new ``initial value'' in time. However, as the number of processors available
to run numerical simulations has increased, so has the interest in so-called parallel-in-time methods,
which are designed to parallelize the process of integrating forward in time.

Because time integration typically involves solving for a solution at a set of discrete ``time points,'' it can
be represented in block matrix form, where 
\begin{align}\label{eq:system}
A\mathbf{u} = \begin{bmatrix} I \\ -\Phi & I \\ & -\Phi & I \\ & & \ddots & \ddots\end{bmatrix}
	\begin{bmatrix} \mathbf{u}_0 \\ \mathbf{u}_1\\ \mathbf{u}_2 \\ \vdots \end{bmatrix} = \mathbf{f}.
\end{align}
Here, $\mathbf{u}_i$ is the solution at the $i$th time point, $\mathbf{f}$ the right-hand side, and $\Phi$
some invertible operator that progresses the solution from time $t_i$ to $t_{i+1}$. In this setting, classical
(sequential) time integration can be seen as a direct (forward) solve of \eqref{eq:system}. Parallel-in-time methods
can typically be posed as some form of preconditioner or iterative method to solve \eqref{eq:system}. This
introduces new questions on the convergence of such iterations, which do not arise in the sequential setting.
Interestingly, although a lower bi-diagonal matrix is one of the easiest linear systems to solve in serial
though a forward solve, solving lower triangular matrices remains difficult in the parallel setting. 

Many parallel-in-time methods have been proposed, with varying levels of success. Some of the prominent
methods include full space-time multigrid \cite{FalgoutFriedhoffKolevMaclachlanSchroderVandewalle2017,HortonVandewalle1995},
parallel full approximation scheme in space and time (PFASST) \cite{Emmett:2012du},
Parareal \cite{Parareal}, and multigrid reduction in time (MGRiT) \cite{FalgoutFriedhoffKolevMaclachlanSchroder2014}.
Parareal is perhaps the most well-known and one of the original ideas for parallel-in-time integration. Parareal is
effectively a two-level multigrid method. Time points are partitioned into C-points and F-points, and relaxation
consists of integrating each C-point $k-1$ time steps forward, based on its current solution (that is, to the final F-point
preceding the next C-point); here $k$ denotes the coarsening factor. This is followed by a coarse-grid correction,
which approximately inverts the Schur complement of \eqref{eq:system}. In particular, $k$ steps on the fine grid,
$\Phi^k$, are approximated by some operator $\Psi$ that is cheaper to evaluate.
The simplest example is letting $\Psi$ be the same time-integration scheme as $\Phi$, using time steps that are
$k$ times larger. The multigrid reduction in time (MGRiT) algorithm generalizes this to the multilevel setting, by
recursively coarsening the temporal grid until it is sufficiently small to solve directly at minimal cost. 

Analysis of Parareal dates back to \cite{Bal:2005cw}, where Parareal is analyzed from a time-integration
perspective, looking at the stability and error of Parareal as a time-integration routine with respect to the continuous problem.
In \cite{Gander:2007bdb}, the connection between Parareal and a two-level multigrid algorithm with F-relaxation
is discussed, and initial bounds developed for Parareal that are, in some cases, sharp. An analysis of the nonlinear
case is developed in \cite{Gander:2008bt}, largely demonstrating that Parareal is applicable to nonlinear problems, and
the convergence of Parareal applied to elasticity and plasma simulations is discussed, respectively, in
\cite{HessenthalerNordslettenRoehrleSchroderFalgout2018,ReynoldsBarredo:2012bl}. 
More recently, \cite{Wu:2015ht,Wu:2015jv} analyze several specific time integration schemes applied to
problems of the form $\mathbf{u}_t = \mathcal{L}\mathbf{u} + \mathbf{g}$, where $\mathcal{L}$ is symmetric negative
definite. Some of the results are tight, but are indeed limited to specific time-integration schemes
and symmetric negative definite spatial discretizations. 
A detailed analysis of error propagation of two-level MGRiT and Parareal is developed in
\cite{DobrevKolevPeterssonSchroder2017}, under the assumption that fine- and coarse-grid time-stepping operators
commute and are diagonalizable. Results in \cite{DobrevKolevPeterssonSchroder2017} are, to some
extent, a generalization of \cite{Gander:2007bdb}, and also introduce FCF-relaxation to the analysis,
a variation in MGRiT that is not used in Parareal, but that can be important for convergence on difficult
problems. Numerical results in \cite{DobrevKolevPeterssonSchroder2017} demonstrate the derived
bounds appear to be tight when applied to a number of model problems. In fact, the framework developed
in this paper is a substantial generalization of that in \cite{DobrevKolevPeterssonSchroder2017}. 
One corollary proven here is that, for a certain class of problems, the bounds developed in
\cite{DobrevKolevPeterssonSchroder2017} are indeed exact to $O(1/N_c)$, where $N_c$ is the number
of time points on the coarse grid. Relaxation is generalized further in the recent paper \cite{Gander:2018gj}
using an algebraic perspective, similar to the framework used in this paper. {The framework developed in
\cite{DobrevKolevPeterssonSchroder2017} is also extended to the multilevel setting in \cite{MGRiT19}, under
the assumption of diagonalizable operators}.\footnote{{Results and the framework developed in this paper are
also extended in \cite{MGRiT19} to discuss the difficulties that a multilevel method presents over
the two-level setting.}} Finally, in \cite{Friedhoff:2015gt}, local mode
analysis techniques are generalized and applied to parabolic parallel-in-time problems, providing more accurate
estimates of convergence over traditional techniques. 

Despite a number of works analyzing Parareal and MGRiT, there remains a gap in the literature in answering the
fundamental question, for a general problem, what must a coarse-grid time-stepping scheme satisfy to see convergent
behavior? The main contribution of this paper is the development of, under minimal assumptions, necessary and sufficient
conditions for the convergence in norm of linear Parareal and two-level MGRiT. A simple \textit{temporal approximation property} (TAP)
is introduced that measures how accurately the fine-grid time-integration scheme approximates the coarse-grid
integration scheme. This leads to necessary and sufficient conditions for convergence of error propagation in
the $\ell^2$- and $A^*A$-norms, including tight bounds on convergence. Additional results are established under
further assumptions on the time-integration operators being diagonalizable and unitarily diagonalizable. Indeed,
if we assume that the spatial discretization is symmetric and definite as in \cite{Wu:2015jv,Wu:2015ht}, results
here provide exact bounds on convergence for arbitrary time-integration schemes. For the most general results, 
the only assumption is that the problem is linear. Most results also require that the same operator $\Phi$ integrates the
solution at all time-points (for example, there can be no time-dependent differential spatial coefficients). In all
cases, some variation on the TAP provides a simple and relatively intuitive explanation of exactly how the
coarse-grid operator must approximate the fine-grid operator for convergence. Given a time-integration scheme
of interest, it can easily be plugged into results here for a more problem-specific description of necessary and sufficient
conditions for convergence. Theory is based on building error-propagation operators and appealing to block-Toeplitz
matrix theory; it is interesting to point out that a similar approach as used here likely provides a general convergence
framework for the recent idea of using circulant preconditioners in time to solve the space-time system in parallel
\cite{Goddard:2018vy,McDonald:2018cj}.

The rest of this paper proceeds as follows. Section \ref{sec:res} presents the main theoretical contributions
in a concise and accessible manner. Proofs of these results are then established in the following sections. Section
\ref{sec:conv} discusses convergence of reduction-based multigrid-type methods and derives analytic formulas
for two-level error- and residual-propagation operators of MGRiT. The most general theorems are then derived in Section 
\ref{sec:gen}, and further analysis based on additional assumptions is given in Section \ref{sec:diag}. Some
of the analysis can be extended to the time-dependent case, and a discussion on that topic is given in Section
\ref{sec:time}. There are many applications for the new theorems, such as finding the ``best'' coarse-grid time-stepping
scheme for a given problem, better understanding why hyperbolic problems tend to be difficult for parallel-in-time solvers
\cite{Ruprecht:2018hk},
and understanding the effect of spatial coarsening on the convergence of Parareal/MGRiT \cite{Ruprecht:2014gd,HowseDesterckMachlachlanFalgoutSchroder2017}, among others. A brief discussion on implications
of results derived here is given in Section \ref{sec:conc}, and a detailed study is the topic of a forthcoming paper. 

\section{Statement of results} \label{sec:res}

This section presents the theoretical contributions of this paper; proofs
are derived in the sections that follow. The underlying idea is that Parareal and MGRiT are iterative methods
to solve a discrete linear system \eqref{eq:system}, of which the exact solution is simply the space-time vector
achieved through sequential time-stepping. Here, convergence theory is derived to provide, under certain 
assumptions, necessary conditions and sufficient conditions for two-level MGRiT/Parareal to converge to the
sequential solution in norm.

\subsection{The framework}\label{sec:res:frame}

Let $\Phi$ be an $N_x\times N_x$ invertible fine-grid time-stepping operator and $\Psi$ an $N_x\times N_x$ invertible
coarse-grid time-stepping operator, and suppose we coarsen in time by a factor of $k$. The primary results rest on
three assumptions:
\begin{enumerate}
\item $\Phi$ and $\Psi$ are linear.
\item {$\|\Phi\|, \|\Psi\|$ are stable; that is, $\|\Phi^p\|,\|\Psi^p\| < 1$ for some $p$.}
\item $\Phi$ and $\Psi$ are independent of time; that is, the same operator propagates the solution from time
$t_i$ to $t_{i+1}$ and from $t_{j}$ to $t_{j+1}$, for all $i,j$. 
\end{enumerate}
Assumption 1 restricts our attention to the linear case, which, as in many problems, allows for a more detailed
analysis. {The second assumption is an algebraic requirement for a stable time-stepping scheme, which is a
also a natural and reasonable thing to assume. Note that a stronger assumption is $\|\Phi\|,\|\Psi\| < 1$. LeVeque
refers to this as ``strong stability'' \cite[Chapter 9.5]{leveque2007finite}, but results here hold for the more general case as well.}
The third assumption is the strongest one, yet it still
encompasses all problems for which variables do not have time-dependent coefficients, which consists of a large
class of space-time PDEs, among other problems. Some of the theory developed here applies to the time-dependent
case as well. In particular, sufficient conditions can be derived (see Theorem \ref{th:nontoeplitz}) for convergence if
$\Phi$ and $\Psi$ are time-dependent, but simultaneously diagonalizable for all time steps. This occurs, for example,
in the case of time-dependent reaction terms, or for adaptive time-stepping. Some other results hold in the time-dependent
case as well, which are discussed in Section \ref{sec:time}.

Further assumptions that come up (yielding stronger convergence results) are:
\begin{enumerate}\setcounter{enumi}{3}
\item Assume that $(\Psi-\Phi^k)$ is invertible.
\item Assume that $\Phi$ and $\Psi$ commute.
\begin{enumerate}
\item Assume that $\Phi$ and $\Psi$ are diagonalizable. 
\item Assume that $\Phi$ and $\Psi$ are normal (unitarily diagonalizable).
\end{enumerate}
\end{enumerate}
A discussion on these assumptions is provided in Section \ref{sec:res:ass}; however, it is worth noting that
all of them are plausible assumptions for many problems of interest. {Also, it is believed that Assumption 4 is
not actually necessary for any of the presented theoretical results. However, without Assumption 4, some of the
analysis becomes significantly more complicated, and it is not pursued in this work.}

 {\color{black}To consider convergence of Parareal and MGRiT, let $\hat{\mathbf{u}}$ be the exact solution
to \eqref{eq:system}. Then the error and residual for an approximate solution at the $i$th iteration, 
$\mathbf{u}_i$, are given, respectively, by
\begin{align*}
\mathbf{e}_i & =\hat{\mathbf{u}} - \mathbf{u}_i , \\
\mathbf{r}_i & = \mathbf{b} - A \mathbf{u}_i 
= A ( \hat{\mathbf{u}} - \mathbf{u}_i) 
= A\mathbf{e}_i.
\end{align*}
Here, we seek bounds on how Parareal and MGRiT propagate the error and residual corresponding to the linear
system in \eqref{eq:system}. To measure this propagation, we use the discrete $\ell^2$-norm ($\|\cdot\|$) and
$A^*A$-norm ($\|\cdot\|_{A^*A}$),\footnote{The $\ell^2$-norm and $A^*A$-norm are generally the most common
norms used for nonsymmetric problems, where the latter corresponds to a normal-equation formulation.} defined
by
\begin{align*}
\|\mathbf{e}_i\|^2 = \langle \mathbf{e}_i,\mathbf{e}\rangle, \hspace{3ex}
	\|\mathbf{e}_i\|_{A^*A}^2 = \langle A^*A\mathbf{e}_i,\mathbf{e}_i\rangle = \|\mathbf{r}_i\|^2.
\end{align*}
Note that assuming $A$ is nonsingular, $\mathbf{e}_i = \mathbf{0}$ if and only if $\mathbf{r}_i = \mathbf{0}$, and as
$\mathbf{e},\mathbf{r}\to \mathbf{0}$, we converge to the discrete solution obtained through sequential time
stepping. Although in practice users typically want the error to be small, the error cannot be easily 
measured in practice, while the residual can, making error and residual propagation both of interest.

Moving forward, let $\Phi$ denote the fine-grid time-stepping operator and $\Psi$ denote the coarse-grid time-stepping operator,
for all time points, with coarsening factor $k$, and $N_c$ time points on the coarse grid. Let $\mathbf{e}_i^{(F)}$ denote error
associated with Parareal or MGRiT with F-relaxation after $i$ iterations, $\mathbf{e}_i^{(FCF)}$ denote error associated with
MGRiT with FCF-relaxation after $i$ iterations, and similarly for residual vectors.}
Most results here are asymptotic in the sense that certain approximation properties and bounds are given up to order $O(1/N_c)$.
However, the leading constants in the $O(1/N_c)$ terms are also generally quite small, {and positive in all cases.} Furthermore,
because parallel-in-time is most often used when the number of time steps is relatively large, in practice these terms can
often be considered negligible. {Note that the operators $\Phi$ and $\Psi$ correspond to one time step, and are
independent of $N$ and $N_c$.}

{The most general results and remarks are presented in Section \ref{sec:res:th}. Several extensions for specific cases are
given in Sections \ref{sec:res:err} and \ref{sec:res:comm}, and proofs for all results are provided in the sections that follow.}

\subsection{Necessary and sufficient conditions}\label{sec:res:th}

{This section introduces necessary and sufficient conditions for convergence of residual in the $\ell^2$-norm
and, equivalently, error in the $A^*A$-norm, including tight bounds in norm. In the case
that $\Phi$ and $\Psi$ commute, these results hold for error in the $\ell^2$-norm as well.}
To start, a new temporal approximation property (TAP) is introduced, which is the fundamental assumption
leading to convergence. 
\begin{definition}[Temporal approximation property]
Let $\Phi$ denote a fine-grid time-stepping operator and $\Psi$ denote a coarse-grid time-stepping operator, for all time points,
with coarsening factor $k$. Then, $\Phi$ satisfies an F-relaxation temporal approximation property with power $p$
(F-TAP$_p$), with respect to $\Psi$, with constant $\varphi_{F,p}$, if, for all vectors $\mathbf{v}$,
\begin{align}\label{eq:tap_f}
\|(\Psi - \Phi^k)^p\mathbf{v}\| \leq \varphi_{F,p} \left[\min_{x\in[0,2\pi]} \left\| (I - e^{\mathrm{i}x}\Psi )^p\mathbf{v}\right\| \right].
\end{align}
Similarly, $\Phi$ satisfies an FCF-relaxation temporal approximation property with power $p$ (FCF-TAP$_p$), with
respect to $\Psi$, with constant $\varphi_{FCF,p}$, if, for all vectors $\mathbf{v}$,
\begin{align}\label{eq:tap_fcf}
\|(\Psi - \Phi^k)^p\mathbf{v}\| \leq \varphi_{FCF,p}\left[\min_{x\in[0,2\pi]} \left\| (\Phi^{-k}(I - e^{\mathrm{i}x}\Psi) )^p\mathbf{v}\right\| \right].
\end{align}
\end{definition}

Necessary and sufficient conditions for convergence of MGRiT and Parareal under various further assumptions are all
based on satisfying one of the above approximation properties with a nicely bounded constant, typically less than one.
The two variations on
a TAP are conceptually simple and can be presented in a more intuitive manner as follows. An F-TAP requires that
$\Phi^k$ approximates the action of $\Psi$ very accurately for vectors $\mathbf{v}\approx \Psi\mathbf{v}$, and less
accurately for $\mathbf{v}$ that differs significantly from $\Psi\mathbf{v}$. If $\Phi$ and $\Psi$ commute and have an orthogonal eigenvector
basis, then $\Phi^k$ must approximate $\Psi$ very accurately for eigenvectors of $\Psi$ with associated
eigenvalue close to one in magnitude, and less accurately for smaller eigenvalues. In the context of PDEs, order-one
eigenmodes of $\Psi$ typically correspond to the smallest eigenvalues of the spatial discretization. To that end, the
fine-grid and coarse-grid time-stepping operators must propagate ``smooth'' modes in the spatial domain
(corresponding to small eigenvalues) very similarly. In the case of an FCF-TAP, the additional term $\Phi^{-k}$ often makes
the TAP easier to satisfy. If $\|\Phi^k\| < 1$, that is, $\Phi$ is strongly stable \cite[Chapter 9.5]{leveque2007finite}, then
$\|\Phi^{-k}\mathbf{v}\| > \|\mathbf{v}\|$ for all $\mathbf{v}$. Think of this as an extra fudge
factor to help convergence (at the added expense of FCF-relaxation). When $\Psi$ is not diagonalizable, the eigenvectors do
not form a basis, and when $\|(I - e^{\mathrm{i}x}\Psi )\mathbf{v}\| \approx 0$ is a more complicated question. Further analysis
of that case, particularly for hyperbolic problems, is ongoing work. 

{\color{black}
Necessary and sufficient conditions for convergence of MGRiT and Parareal are now presented with respect to the TAP. 

\begin{theorem}[Necessary and sufficient conditions -- Error in the $A^*A$-norm]\label{th:genF}
Suppose that Assumptions 1, 2, and 3 hold, and that $\Phi$ satisfies an F-TAP$_1$ with respect to $\Psi$, with constant
$\varphi_F$, and $\Phi$ satisfies an FCF-TAP$_1$ with respect to $\Psi$, with constant $\varphi_{FCF}$. Then, 
\begin{align}
\frac{\|\mathbf{r}_{i+1}^{(F)}\|}{\|\mathbf{r}_i^{(F)}\|} = \frac{\|\mathbf{e}_{i+1}^{(F)}\|_{A^*A}}{\|\mathbf{e}_i^{(F)}\|_{A^*A}} & < 
	 \varphi_F\left(1+\|\Psi^{N_c}\|\right), \\
\frac{\|\mathbf{r}_{i+1}^{(FCF)}\|}{\|\mathbf{r}_i^{(FCF)}\|} = \frac{\|\mathbf{e}_{i+1}^{(FCF)}\|_{A^*A}}{\|\mathbf{e}_i^{(FCF)}\|_{A^*A}} & < 
	  \varphi_{FCF}\left(1+\|\Phi^{-k}\Psi^{N_c}\Phi^k\|\right),
\end{align}
for iterations $i > 1$. Thus, satisfying $\varphi_F\left(1+\|\Psi^{N_c}\|\right)< 1$ and
$ \varphi_{FCF}\left(1+\|\Phi^{-k}\Psi^{N_c}\Phi^k\|\right)<1$ are sufficient conditions for convergence of MGRiT with
F-relaxation and FCF-relaxation, respectively, on every iteration but one, with respect to error in the $A^*A$-norm. 

Additionally, assume that $(\Psi-\Phi^k)$ is invertible (Assumption 4). Then, 
\begin{align}
\frac{\varphi_F}{1+O(1/\sqrt{N_c})} & \leq \frac{\|\mathbf{r}_{i+1}^{(F)}\|}{\|\mathbf{r}_i^{(F)}\|} =
	\frac{\|\mathbf{e}_{i+1}^{(F)}\|_{A^*A}}{\|\mathbf{e}_i^{(F)}\|_{A^*A}} , \\
 \frac{\varphi_{FCF}}{1+O(1/\sqrt{N_c})} & \leq \frac{\|\mathbf{r}_{i+1}^{(FCF)}\|}{\|\mathbf{r}_i^{(FCF)}\|} =
 	\frac{\|\mathbf{e}_{i+1}^{(FCF)}\|_{A^*A}}{\|\mathbf{e}_i^{(FCF)}\|_{A^*A}},
\end{align}
for iterations $i > 1$. Thus, satisfying $\varphi_F < 1+O(1/\sqrt{N_c})$ and $\varphi_{FCF}< 1+O(1/\sqrt{N_c})$ are necessary
conditions for convergence of MGRiT with F-relaxation and FCF-relaxation, respectively, on every iteration but one,
with respect to error in the $A^*A$-norm. 

Finally, assume that $\Phi$ and $\Psi$ commute and either, (i) $(\Psi-\Phi^k)$ is invertible (Assumptions 4 and 5), or (ii)
$\Phi$ and $\Psi$ are diagonalizable (Assumptions 5 and 5a/5b). Then, $\Phi$ satisfying an F-TAP$_p$, for power $p\geq1$,
with respect to $\Psi$, with $\varphi_{F,p}<(1+O(1/\sqrt{N_c}))$, is a necessary condition to see convergent behavior
of Parareal and two-level MGRiT with F-relaxation, after $p$ iterations, with respect to error in the the $A^*A$-norm.
Similarly,  $\Phi$ satisfying an FCF-TAP$_p$, for power $p\geq1$, with respect to $\Psi$, with
$\varphi_{FCF,p}<(1+O(1/\sqrt{N_c}))$, is a necessary condition to see convergent behavior
of two-level MGRiT with FCF-relaxation, after $p$ iterations, with respect to error in the the $A^*A$-norm.
\end{theorem}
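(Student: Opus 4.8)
The plan is to route everything through the residual-propagation operator. By definition $\|\mathbf{e}_i\|_{A^*A}^2 = \langle A^*A\mathbf{e}_i,\mathbf{e}_i\rangle = \|A\mathbf{e}_i\|^2 = \|\mathbf{r}_i\|^2$, so in each displayed line the two reduction factors are literally the same number and it suffices to bound $\|\mathbf{r}_{i+1}^{(F)}\|/\|\mathbf{r}_i^{(F)}\|$ and its FCF analogue. First I would invoke the closed forms for the two-level error-propagation operators derived in Section~\ref{sec:conv}: restricted to the C-points, F-relaxation gives $E_F = I - B_\Delta^{-1}A_\Delta$ and FCF-relaxation gives the corresponding operator with an extra block-diagonal $\Phi^k$ factor, where $A_\Delta$ (resp.\ $B_\Delta$) is the $N_c\times N_c$ block lower-bidiagonal matrix of the form in \eqref{eq:system} with $\Phi^k$ (resp.\ $\Psi$) on the subdiagonal. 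Conjugating by $A_\Delta$ turns error- into residual-propagation, $A_\Delta E_F A_\Delta^{-1} = I - A_\Delta B_\Delta^{-1} = (B_\Delta - A_\Delta)B_\Delta^{-1}$, which is exactly the finite block-Toeplitz operator whose $d$-th block-diagonal ($d\ge 1$) equals $(\Psi-\Phi^k)\Psi^{d-1}$ up to an irrelevant overall sign, i.e.\ the finite section of the operator with symbol $\sigma_F(z) = z(\Psi-\Phi^k)(I-z\Psi)^{-1}$; the FCF symbol carries an extra right factor of $\Phi^k$. The caveat ``$i>1$'' enters here: only after one F-relaxation sweep is the F-point error slaved to the C-point error, so the reduction-factor identities hold from the second iteration on.

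Second, I would identify the $L^\infty$-norm of the symbol with the TAP constant. Using $|e^{\mathrm{i}x}|=1$ and the change of variables $\mathbf{w}=(I-e^{\mathrm{i}x}\Psi)\mathbf{v}$, a bijection since $\rho(\Psi)<1$ by Assumption~2 makes $I-e^{\mathrm{i}x}\Psi$ invertible,
\[
\max_{x}\|\sigma_F(e^{\mathrm{i}x})\| = \max_x\sup_{\mathbf{v}\ne\mathbf{0}}\frac{\|(\Psi-\Phi^k)\mathbf{v}\|}{\|(I-e^{\mathrm{i}x}\Psi)\mathbf{v}\|} = \sup_{\mathbf{v}\ne\mathbf{0}}\frac{\|(\Psi-\Phi^k)\mathbf{v}\|}{\min_x\|(I-e^{\mathrm{i}x}\Psi)\mathbf{v}\|} = \varphi_F,
\]
where the outer sup and max commute and the last equality is the definition of $\varphi_F$ in \eqref{eq:tap_f}; the analogous computation with $\mathbf{w}=\Phi^{-k}(I-e^{\mathrm{i}x}\Psi)\mathbf{v}$ gives $\max_x\|\sigma_{FCF}(e^{\mathrm{i}x})\|=\varphi_{FCF}$, matching \eqref{eq:tap_fcf}. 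The sufficient-condition bounds then follow by bounding the operator norm of the finite block-Toeplitz section by the symbol sup-norm plus the truncation correction incurred when the block-diagonals with $d\ge N_c$ are discarded; that tail is controlled by $\|\Psi^{N_c}\|$ for F-relaxation and by $\|\Phi^{-k}\Psi^{N_c}\Phi^k\|$ for FCF-relaxation after the relevant conjugation, producing the factors $\varphi_F(1+\|\Psi^{N_c}\|)$ and $\varphi_{FCF}(1+\|\Phi^{-k}\Psi^{N_c}\Phi^k\|)$, with strict inequality since the supremum over the compact frequency set $[0,2\pi]$ is attained.

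Third, for the necessary conditions I would prove a matching lower bound by testing the residual-propagation operator against Fourier wave packets. Choosing $x^\ast$ that maximizes $\|\sigma_F(e^{\mathrm{i}x})\|$ and a near-extremal $\mathbf{v}^\ast$ for the TAP at frequency $x^\ast$, one takes the trial vector $\mathbf{v}^\ast\otimes g$ with $g_j$ a windowed exponential $e^{\mathrm{i}x^\ast j}$ of support width $\sim\sqrt{N_c}$; since such a packet carries only $O(1/\sqrt{N_c})$ relative mass near the first and last coarse time points, the finite Toeplitz operator acts on it like multiplication by $\sigma_F(e^{\mathrm{i}x^\ast})$ up to $O(1/\sqrt{N_c})$ relative error, giving $\|\mathbf{r}_{i+1}\|/\|\mathbf{r}_i\| \ge \varphi_F/(1+O(1/\sqrt{N_c}))$ with positive leading constant, and likewise for FCF. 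Assumption~4 is used here to guarantee $\sigma_F$ (resp.\ $\sigma_{FCF}$) is bounded and the extremal pair $(x^\ast,\mathbf{v}^\ast)$ is non-degenerate so the packet can be built. For power $p\ge1$, commutativity of $\Phi$ and $\Psi$ (Assumption~5) lets $(\Psi-\Phi^k)$ commute with $(I-z\Psi)^{-1}$, so the $p$-fold product of residual-propagation operators is, asymptotically in $N_c$, the finite section of $\sigma_F(z)^p = z^p(\Psi-\Phi^k)^p(I-z\Psi)^{-p}$, whose sup-norm equals $\varphi_{F,p}$ by the same change of variables applied to $(I-e^{\mathrm{i}x}\Psi)^p$; either diagonalizability (Assumptions~5a/5b) or invertibility of $(\Psi-\Phi^k)$ (Assumption~4) is what lets the non-Toeplitz remainder of the $p$-fold product be absorbed into the $O(1/\sqrt{N_c})$ term, and the wave-packet bound applied directly to $E_F^p$ (resp.\ the FCF operator) forces $\varphi_{F,p}<1+O(1/\sqrt{N_c})$ (resp.\ $\varphi_{FCF,p}<1+O(1/\sqrt{N_c})$) whenever $p$-step contraction is observed.

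The main obstacle is the block-Toeplitz boundary analysis: the symbol is operator-valued and, in the generality of Theorem~\ref{th:genF}, neither normal nor diagonalizable, so the classical scalar Toeplitz finite-section asymptotics must be redeveloped by hand — both the upper bound with its explicit $\|\Psi^{N_c}\|$-type correction and, more delicately, the lower bound with the sharp $O(1/\sqrt{N_c})$ rate, where the admissible packet width is dictated by having to stay away from the temporal endpoints. Controlling the non-Toeplitz remainder of the $p$-fold product for $p>1$ under only commutativity plus one of the mild extra assumptions is the heaviest technical step.
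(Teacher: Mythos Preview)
Your treatment of the sufficient conditions is essentially the paper's: both recognize $I-A_\Delta B_\Delta^{-1}$ as a finite block-Toeplitz section, compute its symbol $e^{\mathrm{i}x}(\Psi-\Phi^k)(I-e^{\mathrm{i}x}\Psi)^{-1}$ (the paper keeps the finite-sum form with the explicit $(I-e^{\mathrm{i}N_cx}\Psi^{N_c})$ factor), identify the sup-norm of that symbol with $\varphi_F$, and invoke the standard upper bound on the norm of a Toeplitz section by the symbol sup-norm (Theorem~\ref{th:toeplitz2} in the paper). The $1+\|\Psi^{N_c}\|$ correction and the FCF variant arise the same way.

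Where you genuinely diverge is in the lower bound. You argue variationally, testing $I-A_\Delta B_\Delta^{-1}$ against a windowed Fourier wave packet concentrated at the extremal frequency to recover $\varphi_F$ up to an $O(1/\sqrt{N_c})$ boundary loss. The paper instead takes an algebraic detour: it writes down an explicit pseudoinverse $(I-A_\Delta B_\Delta^{-1})^\dagger$ (Lemmas~\ref{lem:pinv1}--\ref{lem:pinv2}), observes that this pseudoinverse is, up to a few zeroed rows/columns, a block-bidiagonal Toeplitz matrix $\mathcal{T}_0$, and then bounds its smallest nonzero singular value from above using the block-Toeplitz minimum-eigenvalue asymptotics of Serra-Capizzano et al.\ (Theorem~\ref{th:toeplitz}) applied to $\widehat{\mathcal{T}}_p\widehat{\mathcal{T}}_p^*$. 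Your route is more self-contained and constructive; the paper's route trades the wave-packet analysis for an off-the-shelf spectral theorem but requires the pseudoinverse machinery (and this is exactly where Assumption~4 enters for them, since the blockwise inversion of $\Psi-\Phi^k$ is needed to form $\mathcal{T}_0$).

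The place where your sketch is thinnest is the $p$-iteration necessary condition. You note correctly that $(I-A_\Delta B_\Delta^{-1})^p$ is not Toeplitz and wave your hands at absorbing the non-Toeplitz remainder into $O(1/\sqrt{N_c})$. The paper's pseudoinverse approach pays off here: Lemma~\ref{lem:pinv2} gives $(A_0^p)^\dagger$ \emph{exactly} as $\mathcal{T}_0^p$ with $p$ rows and columns zeroed, Lemma~\ref{lem:TsT} shows $\widehat{\mathcal{T}}_p\widehat{\mathcal{T}}_p^*$ is genuinely block-Toeplitz with symbol $(-a+be^{\mathrm{i}x})^p[(-a+be^{\mathrm{i}x})^p]^*$, and Lemma~\ref{lem:sig} (a Weyl-inequality argument) controls the discrepancy. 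So the paper never has to estimate a ``non-Toeplitz remainder'' at all; under commutativity the symbol factors cleanly and the TAP$_p$ constant drops out. Making your wave-packet argument rigorous for $p>1$ would require either reproducing this structure or a separate perturbation estimate you have not supplied.
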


Theorem \ref{th:genF} presents necessary and sufficient conditions for convergence of Parareal and MGRiT
with minimal assumptions. The first statements provide necessary and sufficient conditions that every
iteration is convergent in the $\ell^2$-norm for residual and $A^*A$-norm for error. Note that the bounds are
tight. That is, as $N_c$ increases, $\|\Psi^{N_c}\| \approx 0$ and the worst-case ratio of successive error vectors
in the $A^*A$-norm converges exactly to $\varphi_F$ or $\varphi_{FCF}$. 

However, this only considers worst-case convergence for a single iteration. It is possible that convergence after
$p$ iterations is $\ll \varphi^p$. In theory, it is possible to see divergent behavior on initial iterations, but eventual
convergence (because for some non-Hermitian operators, $\|M^p\|\ll \|M\|^p$).  Under the additional assumptions that
$\Phi$ and $\Psi$ commute (which holds, for example, in using arbitrary single-step multi-stage integration schemes for
$\Phi$ and $\Psi$ (Section \ref{sec:res:ass})) and either Assumption 4 or 5a/5b, the final statement in each theorem
provides necessary conditions to see convergence of residual in the $\ell^2$-norm and error in the $A^*A$-norm
after an arbitrary number of iterations. 

\begin{corollary}[Extension to error in the $\ell^2$-norm]\label{cor:err_res}
If $\Phi$ and $\Psi$ commute, then identical conditions and bounds as in Theorem \ref{th:genF} hold for convergence
of error in the $\ell^2$-norm, on all iterations except the last (as opposed to the first).
\end{corollary}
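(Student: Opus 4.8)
The plan is to leverage the identity $R=AEA^{-1}$ between the residual- and error-propagation operators, together with the commuting hypothesis, to show that $E$ and $R$ carry the \emph{same} block-Toeplitz symbol, so that the symbol-level estimates underlying Theorem \ref{th:genF} apply verbatim. First I would recall from Section \ref{sec:conv} the explicit block-Toeplitz form of the F- and FCF-residual-propagation operators; their symbols are rational functions of $\Phi$, $\Psi$ and $e^{\mathrm{i}x}$ on the relevant coarse-grid subspace (in fact polynomials after clearing the $(I-e^{\mathrm{i}x}\Psi)^{-1}$-type factors). Since $A$ is itself block-Toeplitz with symbol a polynomial in $\Phi$ and $e^{\mathrm{i}x}$, and since $\Phi$ and $\Psi$ commute, at each frequency $x$ the symbol of $R$ conjugates the symbol of $E$ by a matrix that commutes with it; hence the two symbols coincide. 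Consequently $\|E\|$ and $\|R\|$ are both equal to the maximum over $x$ of the common symbol norm, up to the same $O(1/\sqrt{N_c})$ boundary corrections, and the TAP-based identifications of $\varphi_F$, $\varphi_{FCF}$ (and their power-$p$ analogues) with this symbol norm carry over unchanged.

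Next I would track the finite-size corrections, since the equality of symbols is only asymptotic: $A\,\mathcal{T}(\widehat E)\,A^{-1}=\mathcal{T}(\widehat E)+(\text{finite-rank, boundary-supported term})$, and this correction is precisely what produces the $\|\Psi^{N_c}\|$- and $\|\Phi^{-k}\Psi^{N_c}\Phi^k\|$-type factors appearing in the sufficient-condition bounds. Because $A$ is lower bi-diagonal, conjugation by $A$ transports this correction from the first coarse-grid block (where it sits for $R^i$, which is why iteration $1$ is the exceptional one) to the last coarse-grid block for $E^i=A^{-1}R^iA$; spelling out $E^i=A^{-1}R^iA$ and locating where $E^i$ fails to be a genuine Toeplitz power yields the claimed shift of the single exceptional iteration from the first to the last. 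For the necessary-direction lower bounds and the power-$p$ statements, I would either repeat the argument of Theorem \ref{th:genF} with $E$ in place of $R$ (under Assumption 5, and 4 or 5a/5b, exactly as there) or, equivalently, invoke the symbol coincidence to transfer the lower bound $\varphi_F/(1+O(1/\sqrt{N_c}))\le\|R\|$ directly to $\|E\|$.

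The main obstacle I anticipate is the bookkeeping of these finite-size corrections in the non-diagonalizable commuting case (Assumption 5 with Assumption 4 but not 5a/5b): there one cannot literally diagonalize $\Phi$ and $\Psi$, the symbol matrices $\widehat E(x)$ are only simultaneously (unitarily) triangularizable, and one must verify that the strictly upper-triangular nilpotent parts contribute only to the $O(1/\sqrt{N_c})$ terms and do not disturb the leading symbol-norm equality — in effect, that the Toeplitz-norm-to-symbol-norm convergence and the boundary-correction estimates are uniform over the relevant family of symbols. A secondary and more routine obstacle is confirming that the exceptional-iteration shift is exactly by one (first to last) and does not contaminate two iterations; this should follow from the one-sided (bi-diagonal) structure of $A$ together with the fact that a single sweep of F- or FCF-relaxation already restricts errors and residuals to a subspace on which the genuine Toeplitz estimates hold.
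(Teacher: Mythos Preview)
Your approach would work in principle, but it is considerably more complicated than the paper's, and the complications you anticipate (boundary corrections from conjugation by $A$, the non-diagonalizable commuting case) are self-inflicted rather than intrinsic to the problem.

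The paper's argument is a one-line observation. From the explicit block formulas \eqref{eq:cgc_res} and \eqref{eq:cgc_err}, the operators $I - A_\Delta B_\Delta^{-1}$ and $I - B_\Delta^{-1} A_\Delta$ differ only in whether the block-diagonal factor $\textnormal{diag}(\Psi - \Phi^k)$ multiplies the lower-triangular $\Psi$-block on the left or on the right. When $\Phi$ and $\Psi$ commute, $(\Psi - \Phi^k)$ commutes with every power $\Psi^j$, so these two operators are \emph{exactly equal as matrices}, not merely equal at the symbol level. Then from \eqref{eq:err_pow}--\eqref{eq:res_pow}, $\mathcal{E}_F^p$ is built from $P_{\textnormal{ideal}}$ times the \emph{same} core $(I - B_\Delta^{-1} A_\Delta)^p = (I - A_\Delta B_\Delta^{-1})^p$ that appears in $\mathcal{R}_F^p$ (and likewise for FCF). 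Hence Theorems \ref{th:suff} and \ref{th:gen}, which bound $\|(I - A_\Delta B_\Delta^{-1})^p\|$ directly, transfer without modification. The only structural difference is that the bounded factor $P_{\textnormal{ideal}}$ sits on the left of the power in $\mathcal{E}^p$, whereas $R_{\textnormal{ideal}}$ sits on the right in $\mathcal{R}^p$; this is precisely what moves the single exceptional iteration from the first to the last.

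Your route through $\mathcal{R} = A\mathcal{E}A^{-1}$ and symbol coincidence requires you to control the finite-rank boundary terms produced by conjugation by the non-Toeplitz $A$, and your concern about simultaneous triangularizability in the non-diagonalizable case is genuine for that line of argument. The paper's approach bypasses all of this: the core-operator identity is an exact matrix equality under commutation, valid with no diagonalizability hypothesis and no asymptotic approximation, so there is nothing further to track.
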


If $\Phi$ and $\Psi$ do not commute, similar results as in Theorem \ref{th:genF} hold, but require a modified version
of the TAP, which is introduced in Section \ref{sec:res:err}. If they do commute \textit{and} are diagonalizable, Section
\ref{sec:res:comm} introduces additional results in a modified norm.

\begin{remark}[Real-valued operators]\label{rem:real}
Suppose $\Psi$ is real-valued and we only consider real-valued $\mathbf{v}$. Expanding
$\|(I - e^{\mathrm{i}x}\Psi)\mathbf{v}\|$ as an inner product yields
\begin{align*}
\|(I - e^{\mathrm{i}x}\Psi)\mathbf{v}\|^2 = \|\mathbf{v}\|^2 + \|\Psi\mathbf{v}\|^2 - 2\cos(\theta_x)\langle \Psi\mathbf{v},\mathbf{v}\rangle,
\end{align*}
where $e^{\mathrm{i}x} = \cos(\theta_x) + i \sin(\theta_x)$. Then,
\begin{align*}
\min_{x\in[0,2\pi]} \|(I - e^{\mathrm{i}x}\Psi)\mathbf{v}\| & = \begin{cases} \|(I + \Psi)\mathbf{v}\|  &
	\text{if } \langle \Psi\mathbf{v},\mathbf{v}\rangle \leq 0 \\
	\|(I - \Psi)\mathbf{v}\| & \text{if } \langle \Psi\mathbf{v},\mathbf{v}\rangle > 0 \end{cases}.
\end{align*}
Similarly, if $\Phi$ is also real-valued, then 
\begin{align*}
\min_{x\in[0,2\pi]} \|\Phi^{-k}(I - e^{\mathrm{i}x}\Psi)\mathbf{v}\| & = \begin{cases} \|\Phi^{-k}(I + \Psi)\mathbf{v}\| &
	\text{if } \langle \Phi^{-k}\Psi\mathbf{v},\Phi^{-k}\mathbf{v}\rangle \leq 0 \\
	\|\Phi^{-k}(I - \Psi)\mathbf{v}\| & \text{if } \langle \Phi^{-k}\Psi\mathbf{v},\Phi^{-k}\mathbf{v}\rangle > 0 \end{cases}.
\end{align*}
\end{remark}

\begin{remark}[F(CF)$^\rho$ relaxation]
Here, we only consider the cases of F- and FCF-relaxation. However, results generalize naturally to arbitrary F(CF)$^\rho$
relaxation (as considered in \cite{Gander:2018gj}), where the CF-steps are repeated $\rho$ times, by simply adding the
term $\Phi^{-k\rho}$ to the right-hand side of the F-TAP, analogous to the $\Phi^{-k}$ in the FCF-TAP. 
\end{remark}

\begin{remark}[Error tolerance, $\delta t$, and superlinear convergence]
It is important to see Parareal and MGRiT as iterative solvers to a discrete linear system \eqref{eq:system}, rather than an
integration scheme to solve the continuous problem. In general, the solution obtained through Parareal or MGRiT should be
no more accurate than that obtained through sequential time stepping, which is exactly defined by the choice of $\Phi$.
An important question in discrete linear systems is how accurately to solve them. Suppose $\Phi$ is a  time-integration
scheme with global accuracy $\mathcal{O}(\delta t^s)$. Then it is generally only necessary to solve the discrete linear
system \eqref{eq:system} (for example, using Parareal or MGRiT) to accuracy $\mathcal{O}(\delta t^s)$. 

When superlinear convergence of Parareal to the continuous solution is observed (for example, see \cite{Gander:2007jq}),
this corresponds to Parareal iterations converging faster (in the discrete sense) than the integration accuracy of $\Phi$. From
Theorem \ref{th:genF}, we see this is likely a result of satisfying the F-TAP with constant $O(\delta t^\ell)$, where $\ell$ is greater
than the integration accuracy of $\Phi$. For example, if the F-TAP is satisfied with constant $\varphi_F = \delta t^2$,
for given $\Phi$ and $\Psi$, independent of $\delta t$, then Parareal will converge like $\delta t^2$, even as $\delta t\to 0$. 
\end{remark}

\begin{remark}[Self-consistency of $\Psi$]
One of the most surprising results of this theory is how convergence of Parareal depends on the coarse-grid time stepper,
$\Psi$. It is natural to assume that $\Psi$ must approximate $k$ steps on the fine grid, $\Phi^k$, with accuracy that somehow
depends on $\Phi$. However, this is not the case. Indeed, the TAP illustrates that $\Psi$ must approximate $\Phi^k$ with
accuracy based on $I - \Psi$, indicating that there must be some self-consistency in terms of which vectors $\Psi$ approximates
the action of $\Phi^k$ on well. 
\end{remark}

\begin{remark}[Computing TAP constants]
The constants in the TAP are exactly defined as the maximum generalized singular value of the pair
$\{\Psi - \Phi^k, I - e^{\mathrm{i}\hat{x}}\Psi\}$, for some $\hat{x}\in[0,2\pi]$. If we consider real-valued operators, then
(from Remark \ref{rem:real}) we seek the maximum generalized singular value of $\{\Psi - \Phi^k, I - \Psi\}$ and
$\{\Psi - \Phi^k, I + \Psi\}$. For sparse matrices, such as those that arise with explicit time stepping of differential
discretizations, iterative methods have been developed to compute these values and vectors for relatively cheap
and without forming $(I - \Psi)^{-1}$ (for example, see \cite{zha1996computing}). Even if $\Phi$ and $\Psi$ are implicit
and thus contain inverses, iterative methods to compute the largest generalized singular value are typically applicable
if the action of $\Phi$ and $\Psi$ are available.
\end{remark}
}

\subsection{Tight convergence of $\ell^2$-error}\label{sec:res:err}
{\color{black}

Section \ref{sec:res:th} developed necessary and sufficient conditions for convergence of error in the $A^*A$-norm, and
Corollary \ref{cor:err_res} states that if $\Phi$ and $\Psi$ commute, equivalent results as Theorem \ref{th:genF} follow
immediately for error in the $\ell^2$-norm. If $\Phi$ and $\Psi$ do not commute, we need to introduce a modified inverse
temporal approximation property to study convergence of error in the $\ell^2$-norm.

\begin{definition}[Inverse temporal approximation property]
Let $\Phi$ denote a fine-grid time-stepping operator and $\Psi$ denote a coarse-grid time-stepping operator, for all time
points, with coarsening factor $k$, such that $(I - e^{\mathrm{i}x}\Psi)$ is invertible. Then, $\Phi$ satisfies an F-relaxation inverse
temporal approximation property (F-ITAP), with respect to $\Psi$, with constant $\tilde{\varphi}_{F}$, if, for all vectors
$\mathbf{v}$,
\begin{align}\label{eq:itap_f}
\max_{x\in[0,2\pi]} \left\| (I - e^{\mathrm{i}x}\Psi )^{-1}(\Psi - \Phi^k)\mathbf{v}\right\|  \leq \tilde{\varphi}_{F} \|\mathbf{v}\|.
\end{align}
Similarly, $\Phi$ satisfies an FCF-relaxation inverse temporal approximation property (FCF-ITAP), with
respect to $\Psi$, with constant $\tilde{\varphi}_{FCF}$, if, for all vectors $\mathbf{v}$,
\begin{align}\label{eq:itap_fcf}
\max_{x\in[0,2\pi]} \left\| (I - e^{\mathrm{i}x}\Psi )^{-1}(\Psi - \Phi^k)\mathbf{v}\right\| \leq \tilde{\varphi}_{FCF} \|\Phi^{-k}\mathbf{v}\|.
\end{align}
\end{definition}
In the case that $(\Psi - \Phi^k)$ is invertible, $(\Psi - \Phi^k)$ can be moved to the right-hand side. For example, the
F-ITAP can be expressed as
\begin{align*}
\max_{x\in[0,2\pi]} \left\| (I - e^{\mathrm{i}x}\Psi )^{-1}\mathbf{v}\right\| \leq \tilde{\varphi}_{F} \|(\Psi - \Phi^k)^{-1}\mathbf{v}\|,
\end{align*}
for all $\mathbf{v}$. Note the ITAP is not considered with respect to powers $p$. This is because derived results
based on powers also assume that $\Phi$ and $\Psi$ commute (see Theorem \ref{th:genF}),
in which case the results from Section \ref{sec:res:th} hold for error in the $\ell^2$-norm, and the ITAP is not necessary. 
Also, the assumption that $I - e^{\mathrm{i}x}\Psi$ is invertible is equivalent to assuming that $\Psi$ does not have an eigenvalue
of magnitude exactly one.\footnote{This assumption is likely a flaw in our line of proof and not actually necessary.}

\begin{theorem}[Necessary and sufficient conditions -- Error in the $\ell^2$-norm]\label{th:genF2}
Suppose that Assumptions 1, 2, and 3 hold, and that $\Phi$ satisfies an F-ITAP with respect to $\Psi$, with constant
$\tilde{\varphi}_F$, and $\Phi$ satisfies an FCF-ITAP with respect to $\Psi$, with constant $\tilde{\varphi}_{FCF}$. Then, 
with $n$ total iterations,
\begin{align}
\frac{\|\mathbf{e}_{i+1}^{(F)}\|}{\|\mathbf{e}_i^{(F)}\|} & < \tilde{\varphi}_F\left(1+\|\Psi^{N_c}\|\right), \\
\frac{\|\mathbf{e}_{i+1}^{(FCF)}\|}{\|\mathbf{e}_i^{(FCF)}\|} & < \tilde{\varphi}_{FCF}\left(1+\|\Psi^{N_c}\|\right),
\end{align}
for iterations $i=0,..,n-2$. Thus, satisfying $\tilde{\varphi}_F\left(1+\|\Psi^{N_c}\|\right)< 1$ and
$ \tilde{\varphi}_{FCF}\left(1+\|\Phi^{-k}\Psi^{N_c}\Phi^k\|\right)<1$ are sufficient conditions for convergence of MGRiT with
F-relaxation and FCF-relaxation, respectively, on every iteration but one, with respect to error in the $\ell^2$-norm. 

Additionally, assume that $(\Psi-\Phi^k)$ is invertible (Assumption 4). Then, with $n$ total iterations,
\begin{align}
\frac{\tilde{\varphi}_F}{1+O(1/\sqrt{N_c})} & \leq	\frac{\|\mathbf{e}_{i+1}^{(F)}\|}{\|\mathbf{e}_i^{(F)}\|} , \\
 \frac{\tilde{\varphi}_{FCF}}{1+O(1/\sqrt{N_c})} & \leq  \frac{\|\mathbf{e}_{i+1}^{(FCF)}\|}{\|\mathbf{e}_i^{(FCF)}\|},
\end{align}
for iterations $i=0,..,n-2$. Thus, satisfying $\tilde{\varphi}_F < 1+O(1/\sqrt{N_c})$ and $\tilde{\varphi}_{FCF}< 1+O(1/\sqrt{N_c})$
are necessary conditions for convergence of MGRiT with F-relaxation and FCF-relaxation, respectively, on every iteration but one,
with respect to error in the $\ell^2$-norm. 
\end{theorem}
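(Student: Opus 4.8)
The plan is to reduce the whole theorem to two-sided quantitative control of the norm of a single finite block-Toeplitz matrix, reusing the explicit error-propagation formulas from Section~\ref{sec:conv}. First I would write out the two-level error-propagation operator $E_F$ (resp.\ $E_{FCF}$) acting on the full space-time vector and note that, by Assumption~3 (time-independence), after the first F-relaxation sweep the F-point error is slaved to the C-point error; restricting to the C-point degrees of freedom the operator becomes a \emph{strictly lower-triangular block-Toeplitz} matrix $\mathcal{E}_F$ of block order $N_c$, whose $m$-th subdiagonal block is $\Psi^{m-1}(\Phi^k-\Psi)$, and with FCF-relaxation the analogous matrix $\mathcal{E}_{FCF}$ carries an extra factor $\Phi^k$ on the right (the $\Phi^{-k}$ appearing in \eqref{eq:itap_fcf}). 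Its matrix symbol is the causal $H^\infty$-function $g_F(x) = e^{\mathrm{i}x}(I-e^{\mathrm{i}x}\Psi)^{-1}(\Psi-\Phi^k)$, which is well defined precisely because the ITAP hypothesis assumes $I-e^{\mathrm{i}x}\Psi$ invertible for all $x$, and (comparing with \eqref{eq:itap_f}) one has exactly $\sup_{x\in[0,2\pi]}\|g_F(x)\| = \tilde\varphi_F$, and similarly $\sup_x\|g_{FCF}(x)\| = \tilde\varphi_{FCF}$. Everything else is squeezing $\|\mathcal{E}_F\|$ between $\tilde\varphi_F$ and $\tilde\varphi_F - O(1/\sqrt{N_c})$.

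For the sufficient conditions (the strict upper bounds), I would split an arbitrary error into its component in the F-slaved subspace $V$ (the range of $E_F$) and its orthogonal complement. On $V$, the ratio $\|\mathbf{e}_{i+1}\|/\|\mathbf{e}_i\|$ is governed by $\|\mathcal{E}_F\|$, and for a causal symbol the finite section satisfies $\|\mathcal{E}_F\| \le \sup_x\|g_F(x)\| = \tilde\varphi_F$ by the standard Toeplitz/Hardy-space bound, with strict inequality from compressing to finitely many blocks. The part of $\mathbf{e}_i$ transverse to $V$ survives F-relaxation only as a C-point residual, and pushing it through the coarse solve and re-interpolation contributes at most a far-corner factor $\tilde\varphi_F\|\Psi^{N_c}\|$ (propagation over the whole coarse grid); the triangle inequality then yields $\|\mathbf{e}_{i+1}\|/\|\mathbf{e}_i\| < \tilde\varphi_F(1+\|\Psi^{N_c}\|)$ for $i=0,\dots,n-2$ (the bound remains valid, if not sharp, once $\mathbf{e}_i\in V$), and identically for FCF with $g_{FCF}$ and the factor $\|\Phi^{-k}\Psi^{N_c}\Phi^k\|$.

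For the necessary conditions (the lower bounds), assume in addition that $\Psi-\Phi^k$ is invertible (Assumption~4). Choose $x_0$ and a unit vector $\mathbf{w}_0$ with $\|g_F(x_0)\mathbf{w}_0\|$ within $O(1/\sqrt{N_c})$ of $\tilde\varphi_F$, and form a windowed plane wave $\mathbf{w}$ on the coarse grid, $\mathbf{w}_j = c_j e^{\mathrm{i}jx_0}\mathbf{w}_0$, with a Fej\'er-type window $c_j$ supported in the interior of $\{i+1,\dots,N_c-1\}$; a direct estimate gives $\|\mathcal{E}_F\mathbf{w}\| \ge (\tilde\varphi_F - O(1/\sqrt{N_c}))\|\mathbf{w}\|$, the $1/\sqrt{N_c}$ loss being the window's leakage into the truncation. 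Since the leading subdiagonal block of $\mathcal{E}_F$ is $\pm(\Psi-\Phi^k)$, invertible by Assumption~4, $\mathcal{E}_F^{\,i}$ is onto the span of coarse points $\{i,\dots,N_c\}$, so for any $i\le n-2$ there is an initial error $\mathbf{e}_0$ with $\mathbf{e}_i$ equal to (the embedding of) $\mathbf{w}$; taking $\mathbf{e}_{i+1}=E_F\mathbf{e}_i$ realizes $\|\mathbf{e}_{i+1}\|/\|\mathbf{e}_i\| \ge \tilde\varphi_F/(1+O(1/\sqrt{N_c}))$. The last iteration is excluded because the window needs one coarse point of slack on the right; this mirrors the exclusion of the initial iteration(s) in the $A^*A$-norm statement of Theorem~\ref{th:genF}, and follows from the fact that a strictly triangular Toeplitz matrix and its anti-transpose have equal norm. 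The FCF case is identical using $g_{FCF}$, noting $\Phi^k$ is invertible so the leading block of $\mathcal{E}_{FCF}$ stays invertible under Assumption~4.

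The main obstacle is the quantitative Toeplitz estimate: proving the matching upper and lower bounds $\|\mathcal{E}_F\| \in [\tilde\varphi_F - O(1/\sqrt{N_c}),\ \tilde\varphi_F]$ with honest, positive constants in the remainder, and bookkeeping exactly how the non-Toeplitz boundary rows/columns (the first and last coarse points together with the slaved F-point blocks) produce the harmless $(1+\|\Psi^{N_c}\|)$ inflation on the sufficiency side while not spoiling the lower bound on the necessity side. A secondary point to get right is verifying that Assumption~4 is precisely what makes $\mathcal{E}_F^{\,i}$ surjective onto the relevant coordinate subspace, so that a near-extremal $\mathbf{e}_i$ is genuinely reachable from some $\mathbf{e}_0$; without it one only controls the quotient by the kernel of $\mathcal{E}_F^{\,i}$ and the lower bound could in principle degrade.
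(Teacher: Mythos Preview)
Your reduction to the C-point block $I - B_\Delta^{-1}A_\Delta$ and the identification of its infinite symbol $g_F(x)=e^{\mathrm{i}x}(I-e^{\mathrm{i}x}\Psi)^{-1}(\Psi-\Phi^k)$ with $\sup_x\|g_F(x)\|=\tilde\varphi_F$ is exactly right and matches the paper's framework. Where you diverge is in \emph{how} you extract the two-sided bounds on $\|I-B_\Delta^{-1}A_\Delta\|$.

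For the upper bound, your Hardy-space/finite-section argument is correct and in fact yields $\|I-B_\Delta^{-1}A_\Delta\|\le\tilde\varphi_F$ directly, \emph{without} the $(1+\|\Psi^{N_c}\|)$ factor. Your subsequent ``transverse component'' story for where that factor comes from is wrong: once $i\ge 1$ the error already lies in $\operatorname{range}(P_{\textnormal{ideal}})$, so there is no transverse part, and for $i=0$ the argument you sketch does not produce that specific constant. The paper takes a different route: it uses the \emph{finite} symbol $F_F(x)=e^{\mathrm{i}x}(I-e^{\mathrm{i}N_cx}\Psi^{N_c})(I-e^{\mathrm{i}x}\Psi)^{-1}(\Psi-\Phi^k)$ (the truncated geometric sum), applies Tilli's bound (Theorem~\ref{th:toeplitz2}), and then obtains $(1+\|\Psi^{N_c}\|)$ from the triangle inequality on the factor $I-e^{\mathrm{i}N_cx}\Psi^{N_c}$. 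Your bound is actually sharper for the coarse block; the extra factor in the theorem statement is an artifact of the paper's line of proof, not of the operator itself.

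For the lower bound, your Fej\'er-window construction is a legitimate alternative, but the paper does something quite different: it writes down an explicit pseudoinverse $(I-B_\Delta^{-1}A_\Delta)^\dagger$ (Lemma~\ref{lem:pinv1}, here with $g=I$, $f=\Psi$, $h=\Psi-\Phi^k$), observes that its normal equations are, up to boundary rows, a Hermitian block-Toeplitz matrix with generating function $(e^{\mathrm{i}x}(\Psi-\Phi^k)^{-1}\Psi-(\Psi-\Phi^k)^{-1})(\cdot)^*$, and then invokes the sharp minimum-eigenvalue asymptotics of Theorem~\ref{th:toeplitz} (Serra-Capizzano/Miranda--Tilli) to get the $O(1/\sqrt{N_c})$ rate. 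This pseudoinverse route is more machinery-heavy but delegates the hard analysis to citable results; your window approach is more hands-on and would require you to actually prove the $O(1/\sqrt{N_c})$ leakage estimate with an honest constant, which is doable but is real work. Your observation that Assumption~4 is what makes the leading subdiagonal block invertible, hence $\mathcal{E}_F^i$ surjective onto the right coordinate subspace so that near-extremal $\mathbf{e}_i$ are reachable, is a point the paper leaves implicit.
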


As in Section \ref{sec:res:th}, worst-case convergence in the $\ell^2$-norm is given by constants $\tilde{\varphi}_F$ 
and $\tilde{\varphi}_{FCF}$ to $O(1/N_c)$. 
}

\subsection{Additional results for commuting diagonalizable operators}\label{sec:res:comm}

As it turns out, results from above can be strengthened in some sense under the additional assumption that $\Phi$ and
$\Psi$ commute and are diagonalizable. This leads to exact bounds on convergence in a modified norm, that are fairly tight
for a large number of iterations, $p$, as well. By norm equivalence in finite-dimensional spaces, convergence in the modified norm is also
necessary and sufficient for (asymptotic) convergence in the $\ell^2$- and $A^*A$-norms. The constants in norm
equivalence depend on the conditioning of the eigenvectors. First, we introduce a less general approximation
property based on the assumption that $\Phi$ and $\Psi$  commute and are diagonalizable.

\begin{definition}[Temporal eigenvalue approximation property]
Let $\Phi$ denote a fine-grid time-stepping operator and $\Psi$ denote a coarse-grid time-stepping operator,
of size $N_x\times N_x$, for all time points, with coarsening factor $k$. Suppose that $\Phi$ and $\Psi$ commute and are
diagonalizable, with eigenvalues given by $\{\lambda_\ell\}$ and $\{\mu_\ell\}$, respectively. Then, $\Phi$ satisfies an F-relaxation
temporal eigenvalue approximation property (F-TEAP), with respect to $\Psi$, with constant
$\varphi_{F}$, if, for $\ell=0,...,N_x-1$,
\begin{align}\label{eq:teap_f}
|\mu_\ell - \lambda_\ell^k| \leq \varphi_{F}(1 - |\mu_\ell|).
\end{align}
Similarly, $\Phi$ satisfies an FCF-relaxation temporal eigenvalue approximation property (FCF-TEAP), with
respect to $\Psi$, with constant $\varphi_{FCF}$, if, for $\ell=0,...,N_x-1$,
\begin{align}\label{eq:teap_fcf}
|\mu_\ell - \lambda_\ell^k| \leq \varphi_{FCF}\frac{1 - |\mu_\ell|}{|\lambda_\ell^{k}|}.
\end{align}
\end{definition}
Note that for the TEAP, there is no distinction between powers, because scalars commute. {Furthermore, Assumption
2 implies that all eigenvalues $|\mu_i|, |\lambda_i| < 1$. }

\begin{theorem}[The diagonalizable case -- F-relaxation]\label{th:necF}
Let $\Phi$ denote the fine-grid time-stepping operator and $\Psi$ denote the coarse-grid time-stepping operator, for all time points,
with coarsening factor $k$, and $N_c$ time points on the coarse grid. Assume that $\Phi$ and $\Psi$ commute and
are diagonalizable, with eigenvectors given as columns of $U$, and that $\Phi$ satisfies an F-TEAP with respect
to $\Psi$, with constant $\varphi_F<1$. Let $\mathbf{e}_{p+1}$ denote the error vector of Parareal/MGRiT with F-relaxation
after $p+1$ iterations. Then, $\|\mathbf{e}_1\|_{(UU^*)^{-1}}^2 \leq k\|\mathbf{e}_0\|_{(UU^*)^{-1}}^2$, and
\begin{align}\label{eq:errF}
\|\mathbf{e}_{p+1}\|_{(UU^*)^{-1}}^2 \leq \left(\varphi_F^{2p} - O(1/N_c^2)\right)\|\mathbf{e}_1\|_{(UU^*)^{-1}}^2.
\end{align}
Furthermore, this bound is tight; that is, there exists an initial error $\mathbf{e}_0$ such that \eqref{eq:errF} holds
with equality, to $O(1/N_c^2)$. 

This also provides necessary and sufficient (asymptotic) conditions for convergence in the $\ell^2$- and $A^*A$-norms.
That is, iterations may diverge at first, but will eventually converge in the $\ell^2$- and $A^*A$-norms.
\end{theorem}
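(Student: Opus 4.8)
The plan is to reduce everything to a scalar (per-eigenvector) analysis by diagonalizing. Since $\Phi$ and $\Psi$ commute and are simultaneously diagonalizable, write $\Phi = U\Lambda U^{-1}$ and $\Psi = U M U^{-1}$ with $\Lambda = \mathrm{diag}(\lambda_\ell)$, $M = \mathrm{diag}(\mu_\ell)$. The error-propagation operator of two-level MGRiT with F-relaxation (derived in Section \ref{sec:conv}, which I may cite) is a block-Toeplitz-like operator built from $\Psi - \Phi^k$ and powers of $\Psi$; conjugating the whole space-time iteration by the block-diagonal matrix $I_{N_c}\otimes U^{-1}$ decouples it into $N_x$ independent scalar problems, one for each eigenpair $(\lambda_\ell^k, \mu_\ell)$. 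The weighted norm $\|\cdot\|_{(UU^*)^{-1}}$ is precisely the norm in which this change of variables is an isometry: $\|\mathbf{w}\|_{(UU^*)^{-1}}^2 = \|U^{-1}\mathbf{w}\|^2$. So $\|\mathbf{e}_{p+1}\|_{(UU^*)^{-1}}^2 = \sum_\ell \|(\text{scalar propagator}_\ell)^{p}(\text{something}_\ell)\|^2$, and the problem becomes: bound the $p$-th power of each scalar error-propagation operator, which is itself a (scalar) $N_c\times N_c$ lower-triangular Toeplitz matrix with subdiagonal entries determined by $\mu_\ell - \lambda_\ell^k$ and $\mu_\ell$.

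Next I would invoke the scalar/Toeplitz machinery. For F-relaxation the scalar residual-propagation matrix after one iteration has the form $(\mu - \lambda^k) N$ where $N$ is nilpotent-ish lower triangular with entries involving $\mu^j$; the norm of its $p$-th power is governed by the symbol $f(z) = (\mu-\lambda^k)\sum_{j} \mu^{j-1} z^j$ evaluated on the unit circle, up to $O(1/N_c)$ corrections from truncating the Toeplitz operator to finite size. The key quantity is $\max_{|z|=1} |(\mu - \lambda^k)/(1 - \mu z)|$ type expression; taking $z$ to extremize gives $|\mu - \lambda^k|/(1-|\mu|)$, which by the F-TEAP \eqref{eq:teap_f} is bounded by $\varphi_F$. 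Raising to the $p$-th power and summing over $\ell$ gives the bound $\varphi_F^{2p}$ on the ratio $\|\mathbf{e}_{p+1}\|^2/\|\mathbf{e}_1\|^2$, with the $O(1/N_c^2)$ slack coming from the finite-size truncation of the Toeplitz symbol norm; similarly the initial estimate $\|\mathbf{e}_1\|^2 \le k\|\mathbf{e}_0\|^2$ comes from bounding the first-iteration propagator (before it has the benefit of the TAP) by its obvious norm, which picks up a factor related to the coarsening factor $k$. Tightness follows by choosing $\mathbf{e}_0$ concentrated on the eigenvector $\mathbf{v}_\ell$ achieving the max in the TEAP and with the right Fourier content in time to nearly attain the symbol supremum. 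The final sentence — asymptotic necessity and sufficiency in the $\ell^2$- and $A^*A$-norms — follows from finite-dimensional norm equivalence: $\|\cdot\|$ and $\|\cdot\|_{(UU^*)^{-1}}$ are equivalent with constants depending only on $\kappa(U)$, not on $p$, so $\varphi_F<1$ forces the sequence to $0$, while the per-iteration factors in $\|\cdot\|_2$ can exceed $1$ transiently (hence "may diverge at first").

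The main obstacle I anticipate is making the $O(1/N_c^2)$ error term rigorous and uniform. The scalar error-propagation matrices are finite $N_c\times N_c$ truncations of semi-infinite Toeplitz operators, and the norm of a power of such a truncation differs from the supremum of $|$symbol$|^p$ over the circle by a boundary correction. Controlling this correction uniformly over all eigenvalues $\mu_\ell$ (in particular as $|\mu_\ell|\to 1$, where $1-|\mu_\ell|$ is small and the relevant resolvent is nearly singular) is delicate; I expect this is exactly where Assumption 2 ($|\mu_\ell|<1$ strictly) and possibly a quantitative spectral-gap argument enter, and where the "$O(1/N_c^2)$" rather than "$O(1/N_c)$" for this particular (F-relaxation, squared-norm) statement has to be earned via a careful second-order expansion of the truncated Toeplitz norm. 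A secondary technical point is handling the case $\varphi_F$ close to $1$, where one must verify the $O(1/N_c^2)$ term does not swamp $\varphi_F^{2p}$ — but since the statement is asymptotic in $N_c$ with $p$ fixed, this is benign. Everything else (the diagonalization, the weighted-norm isometry, the reduction to scalars, the symbol computation, norm equivalence) is essentially bookkeeping once the Toeplitz-norm lemma from Section \ref{sec:conv} is in hand.
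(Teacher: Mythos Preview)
Your proposal is correct and follows essentially the same route as the paper: simultaneous diagonalization to reduce to scalar per-eigenmode problems in the $(UU^*)^{-1}$-norm (exactly as in \eqref{eq:u*u}), then a Toeplitz symbol analysis giving $|\mu_\ell-\lambda_\ell^k|/(1-|\mu_\ell|)\le\varphi_F$, with the factor-of-$k$ first-iteration estimate coming from $\|P_{\textnormal{ideal}}\|<\sqrt{k}$ (Lemma~\ref{lem:ideal_bound}) and the final norm-equivalence argument identical to \eqref{eq:norml2}. The one place the paper is more concrete than your sketch is the mechanism for the sharp $O(1/N_c^2)$ term: rather than a generic ``second-order expansion of the truncated Toeplitz norm,'' the paper (i) passes to the pseudoinverse (Lemmas~\ref{lem:pinv1}--\ref{lem:pinv2}), whose normal equations are an explicit tridiagonal rank-one perturbation of a Toeplitz matrix, and then (ii) bounds its minimum eigenvalue directly via a three-term recurrence argument (Lemma~\ref{lem:tridiag}); equivalently, in Theorem~\ref{th:nec_diag} the paper observes that the scalar generating function $F_p(x)=(1+|\mu_i|^2-2|\mu_i|\cos(x-\theta))^p$ has a zero of order exactly two at its minimizer, so Theorem~\ref{th:toeplitz} gives rate $N_c^{-2}$ rather than $N_c^{-1}$---which is precisely the ``second-order'' phenomenon you anticipated.
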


\begin{theorem}[The diagonalizable case -- FCF-relaxation]\label{th:necFCF}
Let $\Phi$ denote the fine-grid time-stepping operator and $\Psi$ denote the coarse-grid time-stepping operator, for all time points,
with coarsening factor $k$, and $N_c$ time points on the coarse grid. Assume that $\Phi$ and $\Psi$ commute and
are diagonalizable, with eigenvectors given as columns of $U$, and that $\Phi$ satisfies an FCF-TEAP with respect
to $\Psi$, with constant $\varphi_{FCF}<1$. Let $\mathbf{e}_{p+1}$ denote the error vector of MGRiT with FCF relaxation
after $p+1$ iterations. Then, $\|\mathbf{e}_1\|_{(UU^*)^{-1}}^2 \leq k\|\mathbf{e}_0\|_{(UU^*)^{-1}}^2$, and
\begin{align}\label{eq:errFCF}
\|\mathbf{e}_{p+1}\|_{(UU^*)^{-1}}^2 \leq \left(\varphi_{FCF}^{2p} - O(1/N_c^2)\right)\|\mathbf{e}_1\|_{(UU^*)^{-1}}^2.
\end{align}
Furthermore, this bound is tight; that is, there exists an initial error $\mathbf{e}_0$ such that \eqref{eq:errFCF} holds
with equality, to $O(1/N_c^2)$. 

This also provides necessary and sufficient (asymptotic) conditions for convergence in the $\ell^2$- and $A^*A$-norms.
That is, iterations may diverge at first, but will eventually converge in the $\ell^2$- and $A^*A$-norms.
\end{theorem}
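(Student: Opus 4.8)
The plan is to diagonalize in space, reducing everything to a family of scalar lower‑triangular Toeplitz matrices indexed by the spatial eigenmodes, and then read the norms of their powers off the associated symbols; I give the argument for Theorem \ref{th:necFCF}, Theorem \ref{th:necF} being identical with a shorter symbol.

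\textbf{Step 1 (spatial diagonalization).} Under Assumptions 5 and 5a write $\Phi=U\Lambda U^{-1}$, $\Psi=UMU^{-1}$ with $\Lambda=\mathrm{diag}(\lambda_\ell)$, $M=\mathrm{diag}(\mu_\ell)$. Since $\|\mathbf x\|_{(UU^*)^{-1}}^2=\|(I_{N_c}\otimes U^{-1})\mathbf x\|^2$, the modified norm is just the $\ell^2$‑norm after the change of variables $\mathbf x\mapsto\hat{\mathbf x}:=(I_{N_c}\otimes U^{-1})\mathbf x$, and this commutes with the two‑level FCF error‑propagation operator $E_{FCF}$ whose closed form is established in Section \ref{sec:conv}. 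Because $\Phi,\Psi$ are simultaneously diagonalized, in the $\hat{\mathbf x}$ variables $E_{FCF}$ decouples into $N_x$ scalar problems: restricted to C‑points the mode‑$\ell$ propagation over one iteration is the $N_c\times N_c$ lower‑triangular Toeplitz matrix $E_\ell:=(\lambda_\ell^k-\mu_\ell)\lambda_\ell^k\,B_{\mu_\ell}^{-1}N^2$, where $N$ is the down‑shift and $B_{\mu_\ell}=I-\mu_\ell N$ (for Theorem \ref{th:necF} replace this by $(\lambda_\ell^k-\mu_\ell)B_{\mu_\ell}^{-1}N$). For $p\ge1$ the full error vector is \emph{consistent} — its F‑point blocks are fixed powers of $\Phi$ applied to its C‑point blocks — so within each mode the C/F structure contributes one common scalar factor that cancels in the ratio $\|\mathbf e_{p+1}\|_{(UU^*)^{-1}}/\|\mathbf e_1\|_{(UU^*)^{-1}}$; this is the reason the statement is phrased relative to $\mathbf e_1$ rather than $\mathbf e_0$.

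\textbf{Step 2 (Toeplitz symbols).} Products of power series in $N$ multiply symbols, so $E_\ell^p$ is again lower‑triangular Toeplitz with symbol
$$g_\ell(z)^p=\left(\frac{(\lambda_\ell^k-\mu_\ell)\lambda_\ell^k z^2}{1-\mu_\ell z}\right)^{\!p},\qquad|z|=1 .$$
For a truncation of a Toeplitz operator with bounded analytic symbol, the spectral norm is at most the sup of the symbol on the unit circle and converges to it as $N_c\to\infty$; here
$$\max_{|z|=1}|g_\ell(z)|=\frac{|\lambda_\ell^k|\,|\lambda_\ell^k-\mu_\ell|}{1-|\mu_\ell|}=:\varphi_{FCF,\ell}\le\varphi_{FCF}$$
by the FCF‑TEAP, with maximizer $z=e^{\mathrm{i}x}$ such that $e^{\mathrm{i}x}\mu_\ell$ is a nonnegative real — exactly the realization of the $\min_x$ in the FCF‑TAP for an eigenvector (cf.\ Remark \ref{rem:real}). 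I would then sharpen this to $\|E_\ell^p\|^2=\varphi_{FCF,\ell}^{2p}-O(1/N_c^2)$, with the correction of the stated (negative) sign, by computing the singular values of the truncated matrix explicitly from the closed form $B_{\mu_\ell}^{-p}=\sum_{j\ge0}\binom{p+j-1}{j}\mu_\ell^jN^j$ and the known asymptotics of truncated Toeplitz matrices.

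\textbf{Step 3 (assembly, first iteration, tightness).} Summing Step 2 over modes,
$$\|\mathbf e_{p+1}\|_{(UU^*)^{-1}}^2=\sum_\ell\bigl\|E_\ell^p\,\hat{\mathbf e}_1^{(\ell)}\bigr\|^2\le\Bigl(\max_\ell\|E_\ell^p\|^2\Bigr)\|\mathbf e_1\|_{(UU^*)^{-1}}^2\le\bigl(\varphi_{FCF}^{2p}-O(1/N_c^2)\bigr)\|\mathbf e_1\|_{(UU^*)^{-1}}^2 ,$$
which is \eqref{eq:errFCF}; the separate bound $\|\mathbf e_1\|_{(UU^*)^{-1}}^2\le k\|\mathbf e_0\|_{(UU^*)^{-1}}^2$ follows from the explicit one‑step formula, the factor $k$ being the cost of an arbitrary (non‑consistent) $\mathbf e_0$ acquiring consistent F‑point data during the first F‑relaxation, bounded via stability (Assumption 2). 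For tightness, taking $\varphi_{FCF}$ to be the optimal (smallest) TEAP constant $\max_\ell\varphi_{FCF,\ell}$, fix $\ell^*$ attaining it and $x_*$ attaining $\max_{|z|=1}|g_{\ell^*}|$, and choose $\mathbf e_0$ so that $\hat{\mathbf e}_1$ is supported on mode $\ell^*$ with time component the normalized truncated geometric vector $\propto(1,\overline{e^{\mathrm{i}x_*}},\overline{e^{2\mathrm{i}x_*}},\dots)$ — an approximate leading right singular vector of $E_{\ell^*}^p$; standard estimates for Toeplitz matrices acting on truncated Fourier modes give equality in \eqref{eq:errFCF} up to $O(1/N_c^2)$. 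Finally, since $U$ is a fixed ($N_c$‑independent) invertible matrix, $\|\cdot\|$, $\|\cdot\|_{A^*A}$, $\|\cdot\|_{(UU^*)^{-1}}$ are mutually equivalent with $N_c$‑independent constants, so $\varphi_{FCF}<1$ forces $\mathbf e_p\to\mathbf 0$ in all three norms (after iteration $1$), while the tightness construction shows $\varphi_{FCF}\ge1$ precludes asymptotic decay — the claimed necessary‑and‑sufficient asymptotic characterization. The main obstacle is the finite‑$N_c$ refinement in Step 2: replacing the clean $N_c\to\infty$ symbol bound by $\|E_\ell^p\|^2=\varphi_{FCF,\ell}^{2p}-O(1/N_c^2)$ with the correct sign, together with an explicit near‑maximal singular vector for the truncated power, is exactly where the Toeplitz structure afforded by commutativity and diagonalizability — which makes $E_\ell^p$ itself Toeplitz with symbol $g_\ell^p$ — is indispensable; without it one would be estimating $\|M^p\|$ for a genuinely non‑normal, non‑Toeplitz $M$.
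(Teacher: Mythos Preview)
Your proposal shares the paper's overall strategy---simultaneously diagonalize, reduce to a family of scalar lower-triangular Toeplitz matrices $E_\ell$ indexed by spatial eigenmode, and identify the convergence factor with the supremum of the symbol $g_\ell$ on the unit circle---but the technical route to the sharp $O(1/N_c^2)$ correction and to tightness is genuinely different. You bound $\|E_\ell^p\|$ above by $\sup|g_\ell|^p$ and invoke general truncated-Toeplitz asymptotics, together with an explicit truncated Fourier test vector, for the matching lower bound. The paper instead passes to the \emph{pseudoinverse} of the scalar propagation operator (Lemmas \ref{lem:pinv1}--\ref{lem:pinv2}, \ref{lem:sig}), whose normal equations reduce to a rank-one perturbation of a symmetric tridiagonal Toeplitz matrix \eqref{eq:almost_toe}; it then pins down the minimum eigenvalue of that matrix by an explicit three-term recursion argument (Lemma \ref{lem:tridiag}), localizing it between $(1-|\mu_i|)^2+\pi^2|\mu_i|/(6N_c^2)$ and $(1-|\mu_i|)^2+\pi^2|\mu_i|/N_c^2$, and squeezes this against the lower bound from Theorem \ref{th:nec_diag}. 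Your route is conceptually cleaner---the symbol is the right object, and lower-triangular Toeplitz matrices are closed under products---but the paper's tridiagonal analysis is what delivers the $O(1/N_c^2)$ rate with explicit constants; your Fourier-mode test vector gives the correct limit, but recovers the stated rate only if you additionally exploit that $|g_\ell|^2$ has a \emph{quadratic} maximum on the circle, which the paper makes explicit through Theorem \ref{th:toeplitz} (the order of the zero equals the rate exponent). The first-iteration bound $\|\mathbf{e}_1\|_{(UU^*)^{-1}}^2\le k\|\mathbf{e}_0\|_{(UU^*)^{-1}}^2$ and the norm-equivalence argument for asymptotic $\ell^2$/$A^*A$ convergence are handled in the paper essentially as you describe, via $\|P_{\textnormal{ideal}}\|<\sqrt{k}$ (Lemma \ref{lem:ideal_bound}) and \eqref{eq:norml2}.
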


Note that in the case of normal matrices, $U^{-1} = U^*$, and we have that the $(UU^*)^{-1}$-norm is exactly equal to
the $\ell^2$-norm. In that case, the TEAP and TAP are equivalent, and we have an exact bound on Parareal and
two-level MGRiT convergence of residual in the $\ell^2$-norm and error in the $\ell^2$- and $A^*A$-norms. In the
commuting and diagonalizable case, these results are an extension of the upper bounds developed in
\cite{DobrevKolevPeterssonSchroder2017}.

Some of these results can be extended to the time-dependent case as well, such as when there are time-dependent reaction
terms in a PDE or variable time-step size. Such scenarios are discussion in Section \ref{sec:time}.

\begin{remark}[Convergence bounds observed in practice]
It is worth pointing out that the TAP and Theorems \ref{th:genF}, \ref{th:genF2}, \ref{th:necF}, and \ref{th:necFCF} not only
define worst-observable convergence factors of Parareal and two-level MGRiT, but such convergence factors are likely to be
observed in practice. In the theoretical derivations that follow, convergence bounds are derived based on minimum and maximum
eigenvalues or singular values of block-Toeplitz matrices. In many cases, it can be shown that there are clusters of singular modes
or eigenmodes near these upper or lower bounds \cite{Miranda:2000is,Serra:1998cl,Serra:1998fm,Serra:1999bj},
making them likely to be observed in practice. Numerical results confirming this for diagonalizable model problems can be
found in \cite{DobrevKolevPeterssonSchroder2017}, where the proposed upper bounds match the exact bounds
of Theorems \ref{th:necF} and \ref{th:necFCF}.
\end{remark}

\subsection{Discussion on assumptions}\label{sec:res:ass}

To remark on Assumptions 4 and 5 from Section \ref{sec:res:th}, note that almost all time-integration routines (including all
single-step Runge-Kutta-type methods) are rational functions of
some invertible operator $\mathcal{L}$, where, for example, $\mathcal{L}$ is a scalar in the case of a standard ODE,
or a spatial discretization operator in the case of a space-time PDE. Starting with Assumption 5, if $\Phi$ and
$\Psi$ are both functions of $\mathcal{L}$, assuming that they commute is a mild assumption, because any
rational function of $\mathcal{L}$ commutes. This includes most time-integration schemes, including all single-step
multi-stage Runge-Kutta type schemes. Assumptions (5a) and (5b) then follow if, in addition, $\mathcal{L}$
is diagonalizable and normal, respectively. These are stronger assumptions that are satisfied, for example, in
the case of many parabolic PDEs.

{\color{black}

Returning to Assumption 4, note again that it is believed Assumption 4 is not necessary, and is rather a flaw
in our line of proof. Nevertheless, here we use an example to show that assuming $\Psi - \Phi^k$ is invertible is 
reasonable anyways. Of course, because we want $\Psi \approx\Phi^k$, we don't \textit{want} $\Psi - \Phi^k$ to be
invertible. If $\Psi \mathbf{v} = \Phi^k\mathbf{v}$ for any vector $\mathbf{v}$, then indeed it is not invertible.
However, in practice it is unlikely for $\Psi$ to \textit{exactly} preserve a mode of $\Phi^k$ in this manner.

\begin{example}[RK4 and $\Psi - \Phi^k$]\label{ex:rk}
Consider RK4 time integration with coarsening by a factor of two, where $\Phi$ corresponds to RK4 with time step $\delta t$,
and $\Psi$ corresponds to RK4 with time step $2\delta t$ (the standard approach used in Parareal and MGRiT to approximate
$\Phi^2$). Assume that $\Phi$ and $\Psi$ are stable. Then,
\begin{align*}
\Phi & = I + \delta t\mathcal{L} + \tfrac{\delta t^2}{2}\mathcal{L}^2 + \tfrac{\delta t^3}{6}\mathcal{L}^3 +
	\tfrac{\delta t^4}{24}\mathcal{L}^4 , \\
\Psi & =  I + 2 \delta t \mathcal{L} + 2 \delta t^2\mathcal{L}^2 +  \tfrac{4\delta t^3}{3}\mathcal{L}^3 + \tfrac{2\delta t^4}{3}\mathcal{L}^4 \\
& = \Phi^2 - \left[\tfrac{\delta t^5}{4}\mathcal{L}^5 + \tfrac{5\delta t^6}{72}\mathcal{L}^6 +
	\tfrac{\delta t^7}{72}\mathcal{L}^7 + \tfrac{\delta t^8}{576} \mathcal{L}^8\right] \\
& = \Phi^2 - \tfrac{\delta t^5}{4}\mathcal{L}^5\left[I + \tfrac{5\delta t}{18}\mathcal{L} +
	\tfrac{\delta t^2}{18}\mathcal{L}^2 + \tfrac{\delta t^3}{144} \mathcal{L}^3\right], \\
{\Psi - \Phi^2} & {= - \tfrac{\delta t^5}{4}\mathcal{L}^5\left[I + \tfrac{5\delta t}{18}\mathcal{L} +
	\tfrac{\delta t^2}{18}\mathcal{L}^2 + \tfrac{\delta t^3}{144} \mathcal{L}^3\right]}.
\end{align*}
Assuming $\mathcal{L}$ is nonsingular (which it should be for a well-posed problem), $\Psi - \Phi^2$ is only
singular (non-invertible) if an eigenvalue $\lambda_i$ of $\mathcal{L}$ is exactly one of the three roots of
\begin{align*}
p(\lambda) = 1 + \tfrac{5\delta t}{18}\lambda + \tfrac{\delta t^2}{18}\lambda^2 + \tfrac{\delta t^3}{144}\lambda^3.
\end{align*}
If such an eigenvalue does not exist, then $\Psi - \Phi^k$ is invertible. Working through the closed form for cubic
roots, one can show that the roots of $p(\lambda)$ are approximately given by 
\begin{align*}
\lambda_0 \approx -\frac{5.5}{\delta t}, \hspace{2ex}\lambda_1 \approx \frac{4.96\mathrm{i} - 0.2}{\delta t},
	\hspace{2ex}\lambda_2 \approx \frac{-4.96\mathrm{i} - 0.2}{\delta t}.
\end{align*}

Returning to the assumption that $\Phi$ is stable, a necessary condition for this is that all eigenvalues of $\Phi$
are less than one in magnitude. Thus suppose $\hat{\lambda}\in\{\lambda_0,\lambda_1,\lambda_2\}$ is an
eigenvalue of $\mathcal{L}$ that is also a root of $p(\lambda)$. Applying $\Phi$ to the corresponding eigenvector
$\hat{\mathbf{v}}$ shows that $\Phi$ must have an eigenvalue $\gg 1$. By contradiction, $\Psi - \Phi^k$ must
be invertible if $\Phi$ is stable.
\end{example}

Example \ref{ex:rk} proves that for RK4 with coarsening by a factor of two, $\Psi - \Phi^k$ must be invertible
if $\Phi$ is stable. In general, if $\Phi$ and $\Psi$ are rational functions of some operator $\mathcal{L}$
(same operator on each level), then $\Psi-\Phi^k$ is invertible as long as one of the eigenvalues of
$\mathcal{L}$ is not a root of the difference of the two characteristic polynomials. If $\mathcal{L}$ has
non-negative eigenvalues, then this holds for \textit{all} explicit Runge-Kutta schemes. 

The analysis is more complicated for larger $k$ or if $\Phi$ and $\Psi$ are based on different
operators (for example, if spatial coarsening is used \cite{HowseDesterckMachlachlanFalgoutSchroder2017}).
However, the general heuristic stands that it is unlikely for $\Phi^k$ to exactly preserve a mode
of $\Psi$. If, in fact, it does for a specific problem, a small perturbation to $\delta t$ would likely nullify that
property, and the assumption that $\Psi - \Phi^k$ is invertible will stand.
}

\section{Convergence theory framework} \label{sec:conv}

\subsection{Error and residual propagation} \label{sec:conv:conv}

Let $\mathcal{E}$ denote the error-propagation operator and $\mathcal{R}$ the residual-propagation operator of 
Parareal or two-level MGRiT. These operators are derived analtically in this section. Note that for fixed-point 
iterative methods, error propagation takes the form $\mathcal{E} = I - M^{-1}A$, where $M$ is some approximation of
$A$. From Section \ref{sec:res:frame}, observe that
\begin{align*}
\mathbf{e}_i & = \mathcal{E}^i\mathbf{e}_0\hspace{3ex}  
\Longleftrightarrow \hspace{3ex} A^{-1}\mathbf{r}_i = \mathcal{E}^iA^{-1}\mathbf{r}_0\hspace{3ex} 
\Longleftrightarrow \hspace{3ex} \mathbf{r}_i = (A\mathcal{E}A^{-1})^i\mathbf{r}_0.
\end{align*}
Thus, residual propagation is formally similar to error propagation, where $\mathcal{R} = A\mathcal{E}A^{-1} = I - AM^{-1}$.
In this form, error propagation is a measure of $M$ as a left approximate inverse of $A$ and residual propagation a measure
of $M$ as a right approximate inverse of $A$. Then, observe that
\begin{align}
\|\mathcal{R}\|^2 & = \sup_{\mathbf{x}\neq\mathbf{0}} \frac{\langle \mathcal{R}\mathbf{x},\mathcal{R}\mathbf{x}\rangle}{\langle\mathbf{x},\mathbf{x}\rangle}
	= \sup_{\mathbf{x}\neq\mathbf{0}} \frac{\langle A\mathcal{E}A^{-1}\mathbf{x},A\mathcal{E}A^{-1}\mathbf{x}\rangle}{\langle\mathbf{x},\mathbf{x}\rangle}
	= \sup_{\mathbf{y}\neq\mathbf{0}} \frac{\langle A^*A\mathcal{E}\mathbf{y},\mathcal{E}\mathbf{y}\rangle}{\langle A^*A\mathbf{y},\mathbf{y}\rangle} 
	= \|\mathcal{E}\|^2_{A^*A}; \label{eq:AsA}
\end{align}
that is, the norm of the residual-propagation operator in the $\ell^2$-norm is equivalent to that of the error-propagation 
operator in the $A^*A$-norm.\footnote{In general, the $A^*A$-norm is considered a stronger norm, also consistent with
the result that one can have an arbitrarily accurate left approximate inverse that makes for a poor right approximate
inverse \cite{Mendelsohn:1956wk}.} Note, this is consistent with the previous noted relation,
$\|\mathbf{e}\|_{A^*A} = \|\mathbf{r}\|$.

\subsection{Reduction-based multigrid}\label{sec:conv:mg}

MGRiT and Parareal are both reduction-based multigrid algorithms. Multigrid methods are
a class of iterative methods based on two parts: relaxation and coarse-grid correction, which are designed to be complementary
in the sense that they each reduce different, complementary, error modes \cite{BrHeMc2000}. Error
propagation of relaxation typically takes the form $I - M^{-1}A$, where $M$ is some easy to compute approximation of $A$, such as the
diagonal or lower-triangular block. Coarse-grid correction is a subspace projection, for which error propagation takes the form
$I - P(RAP)^{-1}RA$. Here, $R$ is a restriction operator, which restricts residuals on the fine grid to a coarse-grid problem; $RAP$
defines the coarse-grid problem to be solved; and $P$ is an interpolation operator to interpolate a correction back to the fine grid. 
Moving forward, at times we will just refer to MGRiT, but imply Parareal as well.

For most multigrid methods, the purpose of relaxation is to reduce error associated with highly oscillatory modes (large eigenvalues/singular
values in the algebraic case). Coarse-grid correction is then complementary by reducing error associated with geometrically smooth modes, or
small eigenvalues/singular values. In a reduction-based multigrid method, relaxation and coarse-grid correction instead reduce error associated
with different degrees of freedom (DOFs), or, equivalently, blocks in the matrix. To this end, suppose all DOFs are partitioned into a disjoint covering
of C-points and F-points, and matrices $A,P$, and $R$ take the following block forms:
\begin{align} \label{eq:blocks}
A = \begin{bmatrix} A_{ff} & A_{fc} \\ A_{cf} & A_{cc} \end{bmatrix}, \hspace{3ex}
P = \begin{bmatrix} W \\ I \end{bmatrix}, \hspace{3ex}
R = \begin{bmatrix} Z & I \end{bmatrix},
\end{align}
where the identity blocks in $P$ and $R$ are on the $n_c\times n_c$ C-point block.
A simple two-level reduction-based multigrid method is given by letting $Z = -A_{cf}A_{ff}^{-1}$ and $W = \mathbf{0}$. In this case,
coarse-grid correction yields zero error at C-points \cite{air1}. The restriction operator defined through $Z=-A_{cf}A_{ff}^{-1}$
is referred to as ``ideal restriction,'' denoted $R_{\textnormal{ideal}}$,
where it is ideal in being the unique restriction operator that yields an exact coarse-grid correction at C-points. Following this with an
exact solve on F-points as a relaxation scheme then yields an exact solution at F-points, without modifying the solution at C-points
\cite{air2,air1}. Thus, the solution is exact and we have a two-grid reduction, where solving $A\mathbf{x} = \mathbf{b}$ is reduced to
 solving one system based on $A_{ff}$ and one system based on $RAP$.

MGRiT is also a reduction-based multigrid method, instead using the so-called ``ideal interpolation'' operator. Ideal interpolation,
denoted $P_{\textnormal{ideal}}$,
is defined through $W = -A_{ff}^{-1}A_{fc}$. For symmetric positive definite matrices, ideal interpolation is ideal in a certain
theoretical sense \cite{Falgout:2004cs,ideal18}. In the nonsymmetric setting, ideal interpolation is ideal as the unique interpolation operator that
eliminates the contribution of F-point residuals to the coarse-grid right-hand side \cite{air2,FalgoutFriedhoffKolevMaclachlanSchroder2014}.
When coupled with $R = \begin{bmatrix}\mathbf{0} & I\end{bmatrix}$, referred to as restriction by injection, coarse-grid correction then yields zero
residual at C-points. Note that an exact solve on F-points yields zero residual at F-points. Thus, coarse-grid correction with
$P_{\textnormal{ideal}}$ and restriction by injection, preceded by an exact solve on F-points, also yields an exact two-level
reduction \cite{air2}.\footnote{Note that the ordering is important: coarse-grid correction with $P_{\textnormal{ideal}}$ and restriction by
injection, followed by an exact solve on F-points, does not yield a two-grid reduction \cite{air2}.}

In the algebraic setting, $A_{ff}^{-1}$ is often not easily computed, so approximations are made, such as in AMG methods
based on an approximate ideal restriction (AIR) \cite{air1,air2}. MGRiT and the system in \eqref{eq:system} are unique in that the action
of $A_{ff}^{-1}$ can be computed, so ideal interpolation and exact F-relaxation are feasible choices. In this case, assuming a
block form as in \eqref{eq:blocks}, $RAP_{\textnormal{ideal}} = S_A$, where $S_A$ is the Schur complement, independent of $R$ \cite{air2}. 
Although we can express a closed form for $S_A$ (see \eqref{eq:rap}), $S_A$ is not amenable to a recursive multilevel algorithm.
In particular, one time step on the Schur-complement coarse grid simply consists of taking $k$ steps on the fine grid, which is no
cheaper than solving the fine grid problem directly. Because of this, MGRiT is based on a non-Galerkin coarse grid, where we
approximate $\Phi^k$ with some other operator $\Psi$. Usually, this is accomplished by approximating $k$ steps of size
$\delta t$ with one step of size $k\delta t$. 

The following section derives error and residual-propagation operators for MGRiT. Further details on reduction-based
multigrid methods can be found in \cite{air1,air2,MacLachlan:2006gt,Ries:1983}, and further details on the MGRiT algorithm can
be found in, for example, \cite{DobrevKolevPeterssonSchroder2017,FalgoutFriedhoffKolevMaclachlanSchroder2014,
FalgoutLecouvezWoodward2017,GahvariDobrevFalgoutKolevSchroderSchulzYang2016}. The reduction properties
rely on the ideal interpolation and restriction operators,
\begin{align*}
R_{\textnormal{ideal}} & = \begin{bmatrix}-A_{cf}A_{ff}^{-1}& I \end{bmatrix}, \hspace{5ex}
	P_{\textnormal{ideal}} =\begin{bmatrix} -A_{ff}^{-1}A_{fc} \\ I \end{bmatrix}.
\end{align*}

\subsection{Error and residual-propagation operators}\label{sec:conv:prop}

Consider residual and error propagation of two-level MGRiT, with a non-Galerkin coarse-grid operator, $B_\Delta^{-1}$,
to approximate the Schur complement, $A_\Delta:=S_A^{-1}$ ($A_\Delta$ is used to be consistent with previous works
\cite{DobrevKolevPeterssonSchroder2017,FalgoutFriedhoffKolevMaclachlanSchroder2014}). Because MGRiT is
based on ideal interpolation, here we use a pre-relaxation scheme of
F-relaxation or FCF-relaxation \cite{air2}. It is important to note that, in the case of the MGRiT algorithm, F-relaxation and
C-relaxation refer to an exact solve on F- and C-points, respectively. {This is how parallelization in time is achieved --
an exact solve on F-points corresponds to using the current (spatial) solution at each C-point (in time) and integrating
the spatial solution forward in time over the proceeding $k-1$ F-points. This local time integration is coupled with a global
time integration on a coarse grid that is cheaper to evaluate ($B_\Delta^{-1}$), for a two-level parallel-in-time iterative method.}

Recall that error propagation of relaxation and coarse-grid correction each take the form of a classic fixed-point method,
$I - M^{-1}A$. The approximate inverses for an exact solve on F-points, an exact solve on C-points, and coarse-grid correction,
are given, respectively, by
\begin{align*}
M_F^{-1}& = \begin{bmatrix} A_{ff}^{-1} & \mathbf{0} \\ \mathbf{0} & \mathbf{0} \end{bmatrix} , \hspace{3ex}
M_C^{-1} = \begin{bmatrix} \mathbf{0} & \mathbf{0} \\ \mathbf{0} & A_{cc}^{-1}\end{bmatrix} ,\\
M_{cgc}^{-1} & = \begin{bmatrix}-A_{ff}^{-1}A_{fc} \\ I \end{bmatrix} B_\Delta^{-1} \begin{bmatrix}\mathbf{0} & I \end{bmatrix} 
	 = \begin{bmatrix} \mathbf{0} & -A_{ff}^{-1}A_{fc}B_\Delta^{-1} \\ \mathbf{0} & B_\Delta^{-1} \end{bmatrix}.
\end{align*}
Then, error and residual propagation of two-level MGRiT with pre F-relaxation are given by
$\mathcal{E}_F = I - (M_F^{-1} + M_{cgc}^{-1} -M^{-1}_{cgc}AM^{-1}_F)A$ and
$\mathcal{R}_F = I - A(M_F^{-1} + M_{cgc}^{-1} -M^{-1}_{cgc}AM^{-1}_F)$, respectively. Note that 
\begin{align*}
M^{-1}_{cgc}AM^{-1}_F & =  \begin{bmatrix} \mathbf{0} & -A_{ff}^{-1}A_{fc}B_\Delta^{-1} \\ \mathbf{0} & B_\Delta^{-1} \end{bmatrix}
	\begin{bmatrix} A_{ff} & A_{fc} \\ A_{cf} & A_{cc} \end{bmatrix} \begin{bmatrix} A_{ff}^{-1} & \mathbf{0} \\ \mathbf{0} & \mathbf{0} \end{bmatrix} \\
& = \begin{bmatrix} -A_{ff}^{-1}A_{fc}B_\Delta^{-1}A_{cf}A_{ff}^{-1} & \mathbf{0} \\B_\Delta^{-1}A_{cf}A_{ff}^{-1} & \mathbf{0} \end{bmatrix}.
\end{align*}
Combining,
\begin{align} 
\mathcal{R}_F & = \begin{bmatrix} I & \mathbf{0} \\ \mathbf{0} & I \end{bmatrix} - \begin{bmatrix} A_{ff} & A_{fc} \\ A_{cf} & A_{cc} \end{bmatrix} 
	\begin{bmatrix} A_{ff}^{-1} + A_{ff}^{-1}A_{fc}B_\Delta^{-1}A_{cf}A_{ff}^{-1} & -A_{ff}^{-1}A_{fc} B_\Delta^{-1} \nonumber\\ 
	-B_\Delta^{-1}A_{cf}A_{ff}^{-1} & B_\Delta^{-1}\end{bmatrix} \\
& = \begin{bmatrix} \mathbf{0} & \mathbf{0} \\ -(I - A_\Delta B_\Delta^{-1})A_{cf}A_{ff}^{-1} & I - A_\Delta B_{\Delta}^{-1}\end{bmatrix}  \label{eq:r_pref} \\
& = \begin{bmatrix} \mathbf{0} \\ I - A_\Delta B_{\Delta}^{-1}\end{bmatrix} R_{\textnormal{ideal}}, \label{eq:rf_pow}\\
\mathcal{E}_F & = \begin{bmatrix} I & \mathbf{0} \\ \mathbf{0} & I \end{bmatrix} - 
	\begin{bmatrix} A_{ff}^{-1} + A_{ff}^{-1}A_{fc}B_\Delta^{-1}A_{cf}A_{ff}^{-1} & -A_{ff}^{-1}A_{fc} B_\Delta^{-1} \nonumber\\ 
	-B_\Delta^{-1}A_{cf}A_{ff}^{-1} & B_\Delta^{-1}\end{bmatrix} \begin{bmatrix} A_{ff} & A_{fc} \\ A_{cf} & A_{cc} \end{bmatrix} \\
& = \begin{bmatrix} \mathbf{0} & -A_{ff}^{-1}A_{fc}({I - B_\Delta^{-1} A_\Delta}) \\ \mathbf{0} & {I - B_\Delta^{-1} A_\Delta}\end{bmatrix}  \label{eq:e_pref} \\
& = P_{\textnormal{ideal}}\begin{bmatrix}\mathbf{0} & {I - B_\Delta^{-1} A_\Delta}\end{bmatrix}\label{eq:ef_pow}.
\end{align}

To consider FCF-relaxation, note that MGRiT residual and error propagation for FCF-relaxation is equivalent to multiplying
$\mathcal{R}_F$ and $\mathcal{E}_F$ by residual and error propagation for FC-relaxation, which are respectively given by
\begin{align*}
I - A(M_F^{-1} + M_C^{-1} - M_C^{-1}AM_F^{-1}) & = I - \begin{bmatrix} A_{ff} & A_{fc} \\ A_{cf} & A_{cc} \end{bmatrix}
	\begin{bmatrix} A_{ff}^{-1} & \mathbf{0} \\ -A_{cc}^{-1}A_{cf}A_{ff}^{-1} & A_{cc}^{-1} \end{bmatrix} \\
& = \begin{bmatrix} A_{fc}A_{cc}^{-1}A_{cf}A_{ff}^{-1} & -A_{fc}A_{cc}^{-1} \\ \mathbf{0} & \mathbf{0} \end{bmatrix}, \\
I - (M_F^{-1} + M_C^{-1} - M_C^{-1}AM_F^{-1})A & = I - \begin{bmatrix} A_{ff}^{-1} & \mathbf{0} \\ 
	-A_{cc}^{-1}A_{cf}A_{ff}^{-1} & A_{cc}^{-1} \end{bmatrix} \begin{bmatrix} A_{ff} & A_{fc} \\ A_{cf} & A_{cc} \end{bmatrix} \\
& = \begin{bmatrix} \mathbf{0} & -A_{ff}^{-1}A_{fc} \\ \mathbf{0} & A_{cc}^{-1}A_{cf}A_{ff}^{-1}A_{fc} \end{bmatrix}.
\end{align*}
It follows that residual and error propagation of two-level MGRiT with pre FCF-relaxation are given by
\begin{align}
\mathcal{R}_{FCF} & = \begin{bmatrix} \mathbf{0} & \mathbf{0} \\ -(I - A_\Delta B_\Delta^{-1})A_{cf}A_{ff}^{-1} & I - A_\Delta B_{\Delta}^{-1}\end{bmatrix}
	 \begin{bmatrix} A_{fc}A_{cc}^{-1}A_{cf}A_{ff}^{-1} & -A_{fc}A_{cc}^{-1} \\ \mathbf{0} & \mathbf{0} \end{bmatrix} \nonumber\\
& = \begin{bmatrix} \mathbf{0} & \mathbf{0} \\ -(I - A_\Delta B_\Delta^{-1})A_{cf}A_{ff}^{-1}A_{fc}A_{cc}^{-1}A_{cf}A_{ff}^{-1} & 
	(I - A_\Delta B_\Delta^{-1})A_{cf}A_{ff}^{-1}A_{fc}A_{cc}^{-1}\end{bmatrix} \label{eq:r_fcf} \\
& = \begin{bmatrix} \mathbf{0} \\ (I - A_\Delta B_\Delta^{-1})A_{cf}A_{ff}^{-1}A_{fc}A_{cc}^{-1}\end{bmatrix} R_{\textnormal{ideal}}, \label{eq:rfcf_pow}\\
\mathcal{E}_{FCF} & =\begin{bmatrix} \mathbf{0} & -A_{ff}^{-1}A_{fc}({I - B_\Delta^{-1} A_\Delta}) \\ \mathbf{0} & {I - B_\Delta^{-1} A_\Delta}\end{bmatrix}
	\begin{bmatrix} \mathbf{0} & -A_{ff}^{-1}A_{fc} \\ \mathbf{0} & A_{cc}^{-1}A_{cf}A_{ff}^{-1}A_{fc} \end{bmatrix} \nonumber \\
& = \begin{bmatrix} \mathbf{0} & -A_{ff}^{-1}A_{fc}({I - B_\Delta^{-1} A_\Delta})A_{cc}^{-1}A_{cf}A_{ff}^{-1}A_{fc} \\
	\mathbf{0} & ({I - B_\Delta^{-1} A_\Delta})A_{cc}^{-1}A_{cf}A_{ff}^{-1}A_{fc}\end{bmatrix} \label{eq:e_fcf} \\
& = P_{\textnormal{ideal}}\begin{bmatrix}\mathbf{0} & ({I - B_\Delta^{-1} A_\Delta})A_{cc}^{-1}A_{cf}A_{ff}^{-1}A_{fc}\end{bmatrix}\label{eq:efcf_pow}.
\end{align}

Note from \eqref{eq:AsA} that $\|\mathcal{E}_F^p\|_{A^*A} = \|\mathcal{R}_F^p\|$ and
$\|\mathcal{E}_{FCF}^p\|_{A^*A} = \|\mathcal{R}_{FCF}^p\|$, for $p\geq 1$. 

\subsection{MGRiT matrices}\label{sec:conv:mat}

So far, derivations have been purely algebraic and assumed no structure to the linear system. Focusing on the MGRiT framework,
consider the MGRiT system \eqref{eq:system} and suppose we coarsen in time by a factor of $k$. This corresponds to partitioning
time points into C-points and F-points, such that for every $k$ points, $k-1$ are F-points. For convenience, assume that the first
and last time points are C-points, in which case the total number of C-points is given by $N_c = 1+\frac{N-1}{k}$, where $N$ is the
total number of time points.\footnote{This is slightly different notation than used in \cite{DobrevKolevPeterssonSchroder2017}. There,
it is assumed $N_c = N/k$; however, the coarse grid then has $N_c+1$ points. Here, $N_c$ exactly denotes the number of coarse-grid
time points, at the expense of a slightly more complicated relation between $N$ and $N_c$.}
Then, blocks in an FC-partitioning of the matrix $A$ \eqref{eq:blocks} take the following form:
\begin{align*}
A_{ff} & =
\left[\begin{array}{@{}cccc : c : cccc@{}}
	I &&&&&&&&\text{} \\
		-\Phi & I &&&&&&& \\
		 & \ddots  & \ddots &&&&& \\
		&  & -\Phi & I &&&& \\ \hdashline
		& & & & \ddots \\ \hdashline 
	&&&&& I &&& \\
		&&&&& -\Phi & I & \\
		&&&&&  & \ddots & \ddots  \\
		&&&&& & & -\Phi & I
\end{array}\right],
	\hspace{3ex}
A_{fc} =
\left[\begin{array}{@{}c : c : c : c c@{}}
	-\Phi & & &  \mathbf{0} \\
		~~\mathbf{0} & & & \vdots \\ 
		~~\vdots & & &\vdots\\
		~~\mathbf{0} & & &\mathbf{0} \\ \hdashline 
	& \ddots & & \vdots \\ \hdashline
	&& -\Phi & \mathbf{0} \\ 
		&& ~~\mathbf{0} & \vdots \\
		&& ~~\vdots &\vdots  \\ 
		&& ~~\mathbf{0} & \mathbf{0}\\
\end{array}\right], \\
A_{cf} & =
\left[\begin{array}{@{}c c c c : c : cccc@{}}
	\mathbf{0} & ... && &&&&& \\ \hdashline
	\mathbf{0} & ... & \mathbf{0} & -\Phi &&&&& \\ \hdashline
	&&&&\ddots \\ \hdashline
	&&&&& \mathbf{0} & ... & \mathbf{0} & -\Phi 
\end{array}\right], 
	\hspace{12ex}
A_{cc} = \left[ \begin{array}{@{}c : c : c : c@{}}
	I & & & \\\hdashline
	& I & & \\ \hdashline
	& & \ddots & \\\hdashline
	& & & I 
\end{array}\right].
\end{align*}
Dotted lines are used to highlight the block nature, where each group of $k-1$ F-points are adjacent in the time domain, while all
C-points are disconnected. Next, further matrix forms that arise in residual and error propagation are derived:
{\small
\begin{align}
(A_{ff})^{-1} & = 
\left[\begin{array}{@{}cccc: c : cccc c@{}}
	I &&&&&&&& \\
		\Phi & I &&&&&&& \\
		\vdots & \ddots & \ddots &&&&& \\
		\Phi^{k-2} & ... & \Phi & I &&&& \\ \hdashline
	&&&& \ddots  & \text{ } \\ \hdashline
	&&&&& I &&&& \\
		&&&&& \Phi & I &&& \\
		&&&&& \vdots & \ddots & \ddots & \\
		&&&&& \Phi^{k-2} & ... & \Phi & I 
\end{array}\right],\label{eq:aff_inv}
\\
-A_{cf}(A_{ff})^{-1} & =
\left[\begin{array}{@{}c c c c : c : cccc@{}}
	\mathbf{0} & ... && &&&&& \\ \hdashline
	\Phi^{k-1} & \Phi^{k-2} &... & \Phi &&& \\ \hdashline
	& & & & \ddots \\ \hdashline
	&& &&& \Phi^{k-1} & \Phi^{k-2} &... & \Phi 
\end{array}\right],\label{eq:ideal_block}
\\
-(A_{ff})^{-1}A_{fc} & =
\left[\begin{array}{@{}c : c : c : c c@{}}
	\Phi & & &  \mathbf{0} \\
		\Phi^2 & & & \vdots \\ 
		~~\vdots & & &\vdots\\
		\Phi^{k-1} & & &\mathbf{0} \\ \hdashline 
	& \ddots & & \vdots \\ \hdashline
	&& \Phi & \mathbf{0} \\ 
		&& \Phi^2 & \vdots \\
		&& ~~\vdots &\vdots  \\ 
		&& \Phi^{k-1} & \mathbf{0}\\
\end{array}\right],
	\hspace{8ex}
A_{cf}(A_{ff})^{-1}A_{fc} = \left[ \begin{array}{@{}c : c : c : c@{}}
	\mathbf{0} & & & \\\hdashline
	\Phi^k & \mathbf{0} & & \\ \hdashline
	& \ddots & \ddots & \\\hdashline
	& & \Phi^k & \mathbf{0} 
\end{array}\right].\label{eq:relax_block}
\end{align}
}Note from \eqref{eq:relax_block} that the action of $XA_{cf}(A_{ff})^{-1}A_{fc}$ shifts all columns of $X$ to the left,
and scales all entries by $\Phi^k$. This will be useful in future derivations.

Recall that $RAP_{\textnormal{ideal}}$ is given by the Schur complement of $A$, independent of $R$. To be consistent with
\cite{DobrevKolevPeterssonSchroder2017,FalgoutFriedhoffKolevMaclachlanSchroder2014,Friedhoff:2015gt},
denote $A_\Delta := S_A = RAP_{\textnormal{ideal}}$. From above, it follows that
\begin{align}\label{eq:rap}
A_\Delta = A_{cc} - A_{cf}(A_{ff})^{-1}A_{fc} = \begin{bmatrix} I \\ -\Phi^k & I \\ & \ddots & \ddots \\ && -\Phi^k & I \end{bmatrix}.
\end{align}
Observe that the coarse-grid operator consists of taking $k$ time steps with the time-stepping function $\Phi$. Because this is
no cheaper to evaluate than $k$ individual steps of $\Phi$ -- that is, propagating $k$ steps on the fine grid -- a non-Galerkin coarse-grid
is used. Instead of taking $k$ time steps of size $\delta t$, corresponding to $\Phi^k$, $k$ steps are approximated by some operator $\Psi$,
\begin{align}
A_\Delta B_\Delta^{-1} & = \begin{bmatrix} I \\ -\Phi^k & I \\ & \ddots & \ddots \\ && -\Phi^k & I \end{bmatrix}
	\begin{bmatrix} I \\ \Psi & I \\ \Psi^2 & \Psi & I \\ \vdots & \vdots & \ddots & I \\ \Psi^{N_c-1} & \Psi^{N_c-2} & ... & \Psi & I \end{bmatrix} \label{eq:exact}\\
& = \begin{bmatrix} I \\ \Psi - \Phi^k & I \\ (\Psi-\Phi^k)\Psi & \Psi-\Phi^k & I \\ \vdots & \vdots & \ddots & \ddots\\ (\Psi-\Phi^k)\Psi^{N_c-2} & (\Psi-\Phi^k)\Psi^{N_c-3} & ... & \Psi-\Phi^k  & I\end{bmatrix}, \nonumber\\
I - A_\Delta B_\Delta^{-1} & = \textnormal{diag}(\Psi - \Phi^k)\begin{bmatrix} \mathbf{0} \\ I & \mathbf{0} \\ \Psi & I & \mathbf{0} \\ \vdots & \vdots & \ddots & \ddots \\ \Psi^{N_c-2} & \Psi^{N_c-3} & ... & I  & \mathbf{0} \end{bmatrix}, \label{eq:cgc_res}
\\
{I - B_\Delta^{-1} A_\Delta} & = {\begin{bmatrix} \mathbf{0} \\ I & \mathbf{0} \\ \Psi & I & \mathbf{0} \\ \vdots & \vdots & \ddots & \ddots \\ \Psi^{N_c-2} & \Psi^{N_c-3} & ... & I  & \mathbf{0} \end{bmatrix} } \textnormal{{diag}}{(\Psi - \Phi^k)}. \label{eq:cgc_err}
\end{align}
{Note that if $\Phi$ and $\Psi$ commute, then ${I - B_\Delta^{-1} A_\Delta} = I - A_\Delta B_\Delta^{-1}$.}

\section{The general case}\label{sec:gen}

This section derives a sequence of linear algebra lemmas, which are then used to present and prove a more precise version of
Theorem \ref{th:genF}. The underlying idea is that the $\ell^2$-norm of an operator is given by the
largest singular value, which is also equivalent to one divided by the smallest nonzero singular value of the respective
pseudoinverse. Here, we rely on block-Toeplitz matrix theory to place tight bounds on the maximum and minimum
singular values of operators related to error and residual propagation. 

From Section \ref{sec:conv:prop} and \eqref{eq:rf_pow}, \eqref{eq:ef_pow}, \eqref{eq:rfcf_pow}, and \eqref{eq:efcf_pow}, error- and
residual-propagation operators for $p$ iterations of two-level MGRiT, with F-relaxation and FCF-relaxation, take the following
forms: 
\begin{align}
\mathcal{E}_F^p & = P_{\textnormal{ideal}}({I - B_\Delta^{-1}A_\Delta} )^p,
	\hspace{5ex}\mathcal{E}_{FCF}^p = P_{\textnormal{ideal}}\left({(I - B_\Delta^{-1}A_\Delta })A_{cf}A_{ff}^{-1}A_{fc}\right)^p, \label{eq:err_pow}\\
\mathcal{R}_F^p & = (I - A_\Delta B_\Delta^{-1})^pR_{\textnormal{ideal}},
	\hspace{5ex}\mathcal{R}_{FCF}^p = \left((I - A_\Delta B_\Delta^{-1})A_{cf}A_{ff}^{-1}A_{fc}\right)^pR_{\textnormal{ideal}},\label{eq:res_pow}
\end{align}
where matrices are as in Section \ref{sec:conv:mat}. Notice that convergence over $p>1$ iterations in all cases is
determined by bounding either $\|(I - A_\Delta B_\Delta^{-1})^p\| < 1$ for F-relaxation or
$\|((I - A_\Delta B_\Delta^{-1})A_{cf}A_{ff}^{-1}A_{fc})^p\| < 1$ for FCF-relaxation. The leading (trailing) factor of $R_{\textnormal{ideal}}$
($P_{\textnormal{ideal}}$) accounts for the single iteration in Theorems\ref{th:genF} on which convergence
may not be observed. The following lemma proves that $\|R_{\textnormal{ideal}}\|,\|P_{\textnormal{ideal}}\| < \sqrt{k}$ if $\Phi$ is strongly stable.

\begin{lemma}[Bounds on $\|R_{\textnormal{ideal}}\|,\|P_{\textnormal{ideal}}\|$]\label{lem:ideal_bound}
Let $\|\Phi\| < 1$. Then,
\begin{align*}
\|R_{\textnormal{ideal}}\| = \|P_{\textnormal{ideal}}\| < \sqrt{k}.
\end{align*}
\end{lemma}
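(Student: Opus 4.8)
The plan is to compute $\|P_{\textnormal{ideal}}\|^2$ directly using the block structure of $P_{\textnormal{ideal}}$, and then obtain the bound on $\|R_{\textnormal{ideal}}\|$ either by an identical argument on its transpose-like structure or by observing that $R_{\textnormal{ideal}}$ and $P_{\textnormal{ideal}}$ are adjoints up to a known relationship. Recall from \eqref{eq:blocks} and Section \ref{sec:conv:mat} that $P_{\textnormal{ideal}} = \begin{bmatrix} -A_{ff}^{-1}A_{fc} \\ I\end{bmatrix}$, and from \eqref{eq:relax_block} the block $-A_{ff}^{-1}A_{fc}$ is block-diagonal (over the $N_c$ coarse points) with each diagonal block being the column vector $\begin{bmatrix} \Phi & \Phi^2 & \cdots & \Phi^{k-1}\end{bmatrix}^T$ acting on an $N_x$-dimensional space. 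Hence $P_{\textnormal{ideal}}^*P_{\textnormal{ideal}} = I + (A_{ff}^{-1}A_{fc})^*(A_{ff}^{-1}A_{fc})$ is block-diagonal with each block equal to $I + \sum_{j=1}^{k-1}(\Phi^*)^j\Phi^j = I + \sum_{j=1}^{k-1}(\Phi^j)^*\Phi^j$.

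So the first step is to reduce $\|P_{\textnormal{ideal}}\|^2$ to $\max$ over the coarse points of $\|I + \sum_{j=1}^{k-1}(\Phi^j)^*\Phi^j\|$; since every diagonal block is identical, this is simply $\|I + \sum_{j=1}^{k-1}(\Phi^j)^*\Phi^j\|$. The second step is to bound this: by the triangle inequality and submultiplicativity,
\begin{align*}
\Bigl\|I + \sum_{j=1}^{k-1}(\Phi^j)^*\Phi^j\Bigr\| \leq 1 + \sum_{j=1}^{k-1}\|\Phi^j\|^2 \leq 1 + \sum_{j=1}^{k-1}\|\Phi\|^{2j} < 1 + (k-1) = k,
\end{align*}
where the strict inequality uses the hypothesis $\|\Phi\| < 1$, so that $\|\Phi\|^{2j} < 1$ for each $j=1,\dots,k-1$. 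Taking square roots gives $\|P_{\textnormal{ideal}}\| < \sqrt{k}$.

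For $R_{\textnormal{ideal}} = \begin{bmatrix} -A_{cf}A_{ff}^{-1} & I\end{bmatrix}$, the cleanest route is to note $\|R_{\textnormal{ideal}}\| = \|R_{\textnormal{ideal}}^*\|$ and that $R_{\textnormal{ideal}}^* = \begin{bmatrix} -(A_{cf}A_{ff}^{-1})^* \\ I\end{bmatrix}$ has the same columnwise block structure: from \eqref{eq:ideal_block}, $-A_{cf}A_{ff}^{-1}$ is block-diagonal with each diagonal block the row vector $\begin{bmatrix}\Phi^{k-1} & \Phi^{k-2} & \cdots & \Phi\end{bmatrix}$, so $R_{\textnormal{ideal}}R_{\textnormal{ideal}}^* = I + \sum_{j=1}^{k-1}\Phi^j(\Phi^j)^*$ (block-diagonal, identical blocks), and the identical estimate $\|I + \sum_{j=1}^{k-1}\Phi^j(\Phi^j)^*\| \leq 1 + \sum_{j=1}^{k-1}\|\Phi\|^{2j} < k$ applies. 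Finally one remarks these two upper bounds are the same number, and although the lemma is stated as an equality $\|R_{\textnormal{ideal}}\| = \|P_{\textnormal{ideal}}\|$, this follows because $\|P_{\textnormal{ideal}}\|^2 = \|I + \sum_{j=1}^{k-1}(\Phi^j)^*\Phi^j\| = \|(I + \sum_{j=1}^{k-1}(\Phi^j)^*\Phi^j)^*\|$, and one can pass between $\sum (\Phi^j)^*\Phi^j$ and $\sum \Phi^j(\Phi^j)^*$ since $\|BB^*\| = \|B^*B\|$ for the block matrix $B = \begin{bmatrix}\Phi \\ \vdots \\ \Phi^{k-1}\end{bmatrix}$ (they have the same nonzero spectrum). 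I do not anticipate a serious obstacle here; the only mild subtlety is bookkeeping the block-diagonal reduction correctly and being careful that the "identical blocks" claim genuinely holds (it does, since $\Phi$ is time-independent by Assumption 3), together with confirming the equality — rather than merely a common bound — between $\|R_{\textnormal{ideal}}\|$ and $\|P_{\textnormal{ideal}}\|$ via the $BB^*$/$B^*B$ identity.
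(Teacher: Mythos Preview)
Your approach is essentially the same as the paper's: form the Gram matrix ($P_{\textnormal{ideal}}^*P_{\textnormal{ideal}}$ or $R_{\textnormal{ideal}}R_{\textnormal{ideal}}^*$), observe it is block diagonal with generic block $\sum_{j=0}^{k-1}(\Phi^j)^*\Phi^j$ (resp.\ $\sum_{j=0}^{k-1}\Phi^j(\Phi^j)^*$), and bound by the triangle inequality and $\|\Phi\|<1$. One minor bookkeeping point: not all diagonal blocks are identical---the last column of $-A_{ff}^{-1}A_{fc}$ (and the first row of $-A_{cf}A_{ff}^{-1}$) is zero, so the corresponding block is just $I$; this does not affect the bound.

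There is, however, a small gap in your justification of the \emph{equality} $\|R_{\textnormal{ideal}}\|=\|P_{\textnormal{ideal}}\|$. With $B=\begin{bmatrix}\Phi\\ \vdots\\ \Phi^{k-1}\end{bmatrix}$ you correctly get $B^*B=\sum_{j=1}^{k-1}(\Phi^j)^*\Phi^j$, but $BB^*$ is a $(k-1)\times(k-1)$ \emph{block} matrix with $(i,j)$-block $\Phi^i(\Phi^j)^*$, not the single $N_x\times N_x$ matrix $\sum_{j=1}^{k-1}\Phi^j(\Phi^j)^*$. So the identity $\|BB^*\|=\|B^*B\|$ does not directly equate $\|\sum(\Phi^j)^*\Phi^j\|$ with $\|\sum\Phi^j(\Phi^j)^*\|$. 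The paper itself, incidentally, does not prove this equality either---its proof only establishes the common bound $<\sqrt{k}$ and appeals to an ``analogous derivation'' for the second operator. For the purposes of the lemma's use downstream (bounding the first/last iteration), the common bound is all that matters.
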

\begin{proof}
From \eqref{eq:ideal_block}, note that $\|R_{\textnormal{ideal}}\| = \sigma_{max}(R_{\textnormal{ideal}}) = \sqrt{\lambda_{max}(R_{\textnormal{ideal}}R_{\textnormal{ideal}}^*)}$, where 
$R_{\textnormal{ideal}}R_{\textnormal{ideal}}^*$ is block diagonal, with an identity in the first block, and the rest given by $\sum_{\ell=0}^{k-1} \Phi^\ell(\Phi^{\ell})^*$.
Then,
\begin{align*}
\|R_{\textnormal{ideal}}\| = \sqrt{\left\| \sum_{\ell=0}^{k-1} \Phi^\ell(\Phi^{\ell})^*\right\|} \leq \sqrt{\sum_{\ell=0}^{k-1} \left\|\Phi^\ell(\Phi^{\ell})^*\right\|}
	= \sqrt{\sum_{\ell=0}^{k-1} \left\|\Phi^\ell\right\|^2} < \sqrt{k}.
\end{align*}
An analogous derivation confirms that $\|P_{\textnormal{ideal}}\| < \sqrt{k}$. 
\end{proof}
Note that Lemma \ref{lem:ideal_bound} is not necessarily tight, but sufficient to prove that error cannot diverge
significantly in the $A^* A$- or $\ell^2$-norm in the first/last iteration.

\subsection{{Residual-propagation and $I - A_\Delta B_\Delta^{-1}$}}\label{sec:gen:res}

Now, we consider the maximum singular value of $I - A_\Delta B_\Delta^{-1}$ and
$(I-A_\Delta B_\Delta^{-1})A_{cf}A_{ff}^{-1}A_{fc}$, which arises in residual propagation.
From \eqref{eq:relax_block} and \eqref{eq:cgc_res}, it is clear that
both of these operators are block-Toeplitz matrices. Appealing to block-Toeplitz matrix theory, asymptotically (in $N_c$)
tight bounds can be placed on the maximum singular value by way of considering the operator's generator function. 
Let $\alpha_i$ denote the (potentially matrix-valued) Toeplitz coefficient for the $i$th diagonal of a (block) Toeplitz matrix,
where $\alpha_0$ is the diagonal, $\alpha_{-1}$ the first subdiagonal, and so on. Then the Toeplitz matrix corresponds
with a Fourier generator function,
\begin{align*}
{F}(x) & = \sum_{i=-\infty}^\infty \alpha_ie^{-\mathrm{i}x}.
\end{align*}
The following theorems introduce specific results from the field of block-Toeplitz operator theory, which prove important
in further derivations.

\begin{theorem}[Minimum eigenvalue of Hermitian block-Toeplitz operators \cite{Miranda:2000is,Serra:1998cl,Serra:1999ed}]
\label{th:toeplitz}
Let $T_N(F)$ be an $N\times N$ Hermitian block-Toeplitz matrix, with self-adjoint, continuous generating function
$F(x): [0,2\pi]\to\mathbb{C}^{m\times m}$, where $F(x) =F(x)^*$, and the minimum eigenvalue of $F(x)$ is not constant.
Then, the smallest eigenvalue of $T_N(F)$ is given by
\begin{align*}
\lambda_{\min}(T_N) = \min_{x\in[0,2\pi]} \lambda_{\min}(F(x)) + O(N^{-\alpha}),
\end{align*}
where $\alpha>0$ is the order of the highest-order zero in $x$ of 
\begin{align*}
\lambda_{\min}(F(x)) - \left[\min_{y\in[0,2\pi]} \lambda_{\min}(F(y)) \right].
\end{align*}
\end{theorem}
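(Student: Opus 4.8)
The plan is to establish the asymptotic formula for $\lambda_{\min}(T_N(F))$ by combining two classical tools from block-Toeplitz theory: a sharp localization of the spectrum in terms of the generating function, and a quantitative rate estimate governed by the order of vanishing of $\lambda_{\min}(F(x)) - \min_y \lambda_{\min}(F(y))$. First I would reduce to the case where the infimum of $\lambda_{\min}(F(x))$ equals zero, by subtracting the constant matrix $\big(\min_y \lambda_{\min}(F(y))\big) I$ from $F$; since this shifts every eigenvalue of $T_N(F)$ by the same constant (because $T_N(cI) = cI$), it suffices to prove $\lambda_{\min}(T_N(F)) = O(N^{-\alpha})$ when $\min_x \lambda_{\min}(F(x)) = 0$, with $\alpha$ the order of the highest-order zero of $\lambda_{\min}(F(x))$ in $x$.

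The lower bound $\lambda_{\min}(T_N(F)) \geq 0 = \min_x \lambda_{\min}(F(x))$ is the easy direction: for any vector $\mathbf{v}$, the Rayleigh quotient $\langle T_N(F)\mathbf{v},\mathbf{v}\rangle$ can be written as an integral $\frac{1}{2\pi}\int_0^{2\pi} \langle F(x)\widehat{\mathbf{v}}(x), \widehat{\mathbf{v}}(x)\rangle\,dx$ against the Fourier-type transform $\widehat{\mathbf{v}}$ of the block-vector $\mathbf{v}$, and since $F(x) \succeq 0$ pointwise this integral is nonnegative (this is the standard Grenander--Szeg\H{o}/Tilli argument for Hermitian block-Toeplitz forms). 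The harder direction is the matching upper bound $\lambda_{\min}(T_N(F)) \leq C N^{-\alpha}$: here I would take $x_0$ to be a point where $\lambda_{\min}(F(x))$ attains its highest-order zero, let $\mathbf{w}_0 \in \mathbb{C}^m$ be a corresponding null eigenvector of $F(x_0)$, and build a trial block-vector $\mathbf{v}_N \in \mathbb{C}^{Nm}$ whose blocks are $e^{-\mathrm{i} j x_0}\mathbf{w}_0$ modulated by a smooth, compactly-supported-in-frequency bump (a discrete analogue of a Fej\'er-type kernel) concentrating near $x_0$ at scale $N^{-1}$. Evaluating the Rayleigh quotient on $\mathbf{v}_N$ and Taylor-expanding $\lambda_{\min}(F(x))$ near $x_0$ to order $\alpha$ then yields the bound $O(N^{-\alpha})$ after balancing the spread of the bump against the vanishing rate; the continuity of $F$ controls the off-diagonal Fourier tails so that $T_N(F)$ genuinely acts like the integral operator on the relevant frequency window.

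The main obstacle is the sharpness of the exponent $\alpha$ in the upper bound: a naive trial vector gives a weaker rate, and extracting the exact power requires that the modulating bump be tuned to the Taylor behavior of $\lambda_{\min}(F(\cdot))$ at $x_0$ and that one track cross terms coming from the fact that $\mathbf{w}_0$ is only an eigenvector of $F(x_0)$, not of $F(x)$ for nearby $x$. This is precisely the content of the cited results \cite{Miranda:2000is,Serra:1998cl,Serra:1999ed}, so in the paper I would simply invoke those theorems, noting that the hypotheses (Hermitian, continuous, self-adjoint generating function with nonconstant minimal eigenvalue) are exactly what is assumed here, and that the $O(N^{-\alpha})$ term with the stated interpretation of $\alpha$ is their conclusion; the one-line reduction to $\min_x \lambda_{\min}(F(x)) = 0$ above makes the normalization in those references match the statement as written.
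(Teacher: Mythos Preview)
The paper does not prove this theorem at all: it is stated as a known result from the cited references \cite{Miranda:2000is,Serra:1998cl,Serra:1999ed} and then immediately used. Your proposal correctly identifies this at the end (``I would simply invoke those theorems''), which is exactly what the paper does; the preceding sketch of the reduction to $\min_x\lambda_{\min}(F(x))=0$, the integral representation for the lower bound, and the localized trial vector for the upper bound is a reasonable outline of how such results are proved in the literature, but it is extra relative to the paper, which offers no argument whatsoever.
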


\begin{theorem}[Maximum singular value of block-Toeplitz operators \cite{Tilli:1998tl}]\label{th:toeplitz2}
Let $T_N(F)$ be an $N\times N$ block-Toeplitz matrix, with continuous generating function $F(x): [0,2\pi]\to\mathbb{C}^{m\times m}$.
Then, the maximum singular value is bounded above by
\begin{align*}
\sigma_{max}(T_N(F)) \leq \max_{x\in[0,2\pi]} \sigma_{max}(F(x)),
\end{align*}
 for all $N\in\mathbb{N}$.
\end{theorem}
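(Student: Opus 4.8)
The plan is to realize $T_N(F)$ as a compression of the operator of multiplication by $F$ on vector-valued $L^2$ functions on the circle, and then use that compressing by an orthogonal projection cannot increase the operator norm. Given a block vector $\mathbf{v} = (\mathbf{v}_0,\dots,\mathbf{v}_{N-1})^T$ with each $\mathbf{v}_j\in\mathbb{C}^m$, I would associate the $\mathbb{C}^m$-valued trigonometric polynomial $\mathbf{p}(x) = \sum_{j=0}^{N-1}\mathbf{v}_je^{\mathrm{i}jx}$; by orthonormality of $\{e^{\mathrm{i}jx}\}_{j\in\mathbb{Z}}$ in $L^2([0,2\pi])$ under normalized measure, $\frac{1}{2\pi}\int_0^{2\pi}\|\mathbf{p}(x)\|^2\,dx = \sum_{j}\|\mathbf{v}_j\|^2 = \|\mathbf{v}\|^2$.

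The first step is a bookkeeping identity relating $T_N(F)$ to Fourier coefficients. Writing $\alpha_k = \frac{1}{2\pi}\int_0^{2\pi}F(x)e^{\mathrm{i}kx}\,dx$ for the matrix-valued Fourier coefficients, so that $F(x)=\sum_k\alpha_ke^{-\mathrm{i}kx}$, one checks that the $(i,j)$ block of $T_N(F)$ equals $\alpha_{j-i}$, and that for $0\leq i\leq N-1$ the $i$-th Fourier coefficient of the product $F(x)\mathbf{p}(x)$ is exactly $\sum_{j=0}^{N-1}\alpha_{j-i}\mathbf{v}_j = (T_N(F)\mathbf{v})_i$. Hence the blocks of $T_N(F)\mathbf{v}$ form a finite subfamily of the Fourier coefficients of $F\mathbf{p}\in L^2$.

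I would then invoke Bessel's inequality together with a pointwise norm bound. Set $M := \max_{y\in[0,2\pi]}\sigma_{max}(F(y))$, which is finite and attained because continuity of $F$ makes $x\mapsto\sigma_{max}(F(x))$ continuous on the compact set $[0,2\pi]$. Since $\|F(x)\mathbf{p}(x)\|\leq\sigma_{max}(F(x))\|\mathbf{p}(x)\|\leq M\|\mathbf{p}(x)\|$ for every $x$, Bessel's inequality applied to $F\mathbf{p}$ gives
\begin{align*}
\|T_N(F)\mathbf{v}\|^2 &= \sum_{i=0}^{N-1}\|(T_N(F)\mathbf{v})_i\|^2 \leq \frac{1}{2\pi}\int_0^{2\pi}\|F(x)\mathbf{p}(x)\|^2\,dx \\
&\leq M^2\,\frac{1}{2\pi}\int_0^{2\pi}\|\mathbf{p}(x)\|^2\,dx = M^2\|\mathbf{v}\|^2.
\end{align*}
Taking the supremum over $\mathbf{v}\neq\mathbf{0}$ yields $\sigma_{max}(T_N(F)) = \|T_N(F)\|\leq M$, with the bound uniform in $N$, as claimed.

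The points that need care, rather than genuine obstacles, are keeping the index and sign conventions in the bookkeeping identity consistent with the paper's generator-function convention (negative Fourier modes corresponding to subdiagonals), and making explicit why continuity upgrades an essential supremum to an attained maximum on $[0,2\pi]$. An equivalent but more operator-theoretic phrasing bypasses the explicit computation entirely: $T_N(F)$ is unitarily equivalent to $P_N M_F P_N$ restricted to the range of $P_N$, where $M_F$ denotes multiplication by $F$ (with $\|M_F\| = M$) and $P_N$ is the orthogonal projection onto the span of $\{e^{\mathrm{i}jx}\mathbf{e}_l : 0\leq j\leq N-1,\ 1\leq l\leq m\}$; since $\|P_N M_F P_N\|\leq\|M_F\|$ for an orthogonal projection, the bound follows at once. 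I would present the direct version, since it is self-contained and uses only Bessel's inequality.
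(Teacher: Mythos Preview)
The paper does not prove this statement; it is quoted as a known result with a citation to Tilli (1998). Your argument is correct and is essentially the classical proof: realize $T_N(F)$ as the compression $P_N M_F P_N$ of the multiplication operator $M_F$ on $L^2([0,2\pi];\mathbb{C}^m)$ to the span of the first $N$ exponentials, and use $\|P_N M_F P_N\|\leq\|M_F\|=\max_x\sigma_{\max}(F(x))$. The explicit Bessel-inequality computation you give is exactly this compression bound written out in coordinates, and your handling of the sign convention matches the paper's (with $\alpha_\ell$ the coefficient of $e^{-\mathrm{i}\ell x}$, so the $(i,j)$ block is $\alpha_{j-i}$). Nothing needs to be added.
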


Theorem \ref{th:toeplitz2} is now used in the following theorem to derive upper bounds on the maximum singular
values of interest. 

\begin{theorem}[Sufficient conditions]\label{th:suff}
Let $\Phi$ denote the fine-grid time-stepping operator and $\Psi$ denote the coarse-grid time-stepping operator,
with coarsening factor $k$, and $N_c$ coarse-grid time points. Assume that $\Phi$ satisfies an F-TAP$_1$ with 
respect to $\Psi$, with constant $\varphi_{F,1}$. Then,
\begin{align*}
\left\| I - A_\Delta B_\Delta^{-1}\right\| \leq \varphi_{F,1}\left(1 +\|\Psi^{N_c}\|\right).
\end{align*}
Similarly, assume that $\Phi$ satisfies an FCF-TAP$_1$ with respect to $\Psi$, with constant
$\varphi_{FCF,1}$. Then, 
\begin{align*}
\left\|(I-A_\Delta B_\Delta^{-1})A_{cf}A_{ff}^{-1}A_{fc}\right\| \leq \varphi_{FCF,1}\left(1 + \|\Phi^{-k}\Psi^{N_c}\Phi^k\|\right).
\end{align*}
\end{theorem}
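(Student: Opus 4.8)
The plan is to realize both operators of interest as block-Toeplitz matrices, identify their generating functions explicitly, and then apply Theorem~\ref{th:toeplitz2} to bound the maximum singular value by the maximum over $x\in[0,2\pi]$ of the singular values of the generating function. For the F-relaxation case, recall from \eqref{eq:cgc_res} that $I - A_\Delta B_\Delta^{-1} = \textnormal{diag}(\Psi - \Phi^k) L$, where $L$ is the strictly lower-triangular block-Toeplitz matrix with Toeplitz coefficients $\alpha_{-j} = \Psi^{j-1}$ for $j\geq 1$ (and $\alpha_j = \mathbf{0}$ for $j \geq 0$). Since left-multiplication by the block-diagonal matrix $\textnormal{diag}(\Psi-\Phi^k)$ preserves the Toeplitz structure, the product is block-Toeplitz with coefficients $(\Psi-\Phi^k)\Psi^{j-1}$ on the $j$th subdiagonal. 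I would caution here that $L$ is formally the truncation to $N_c$ blocks of a semi-infinite Toeplitz operator; to apply Theorem~\ref{th:toeplitz2} rigorously I would pass to the generating function of the $N_c\times N_c$ truncation, or equivalently observe that the geometric tail is controlled by $\|\Psi^{N_c}\|$ and write the generating function as $F(x) = (\Psi - \Phi^k)\sum_{j=1}^{N_c-1} e^{-\mathrm{i}jx}\Psi^{j-1}$, which telescopes.

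The key computation is the telescoping sum: $(I - e^{-\mathrm{i}x}\Psi)\sum_{j=1}^{N_c-1} e^{-\mathrm{i}jx}\Psi^{j-1} = e^{-\mathrm{i}x}(I - e^{-\mathrm{i}(N_c-1)x}\Psi^{N_c-1})$, so that $\sum_{j=1}^{N_c-1} e^{-\mathrm{i}jx}\Psi^{j-1} = e^{-\mathrm{i}x}(I - e^{-\mathrm{i}x}\Psi)^{-1}(I - e^{-\mathrm{i}(N_c-1)x}\Psi^{N_c-1})$. Therefore, for any unit vector $\mathbf{w}$, letting $\mathbf{v} = (I - e^{-\mathrm{i}x}\Psi)^{-1}(I - e^{-\mathrm{i}(N_c-1)x}\Psi^{N_c-1})\mathbf{w}$, we have $\|F(x)\mathbf{w}\| = \|(\Psi-\Phi^k)\mathbf{v}\|$ while $\|(I - e^{-\mathrm{i}x}\Psi)\mathbf{v}\| = \|(I - e^{-\mathrm{i}(N_c-1)x}\Psi^{N_c-1})\mathbf{w}\| \leq 1 + \|\Psi^{N_c-1}\|$. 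Applying the F-TAP$_1$ inequality \eqref{eq:tap_f} to $\mathbf{v}$ (with the minimizing $x$ only making the right side smaller) gives $\|(\Psi-\Phi^k)\mathbf{v}\| \leq \varphi_{F,1}\|(I - e^{-\mathrm{i}x}\Psi)\mathbf{v}\| \leq \varphi_{F,1}(1 + \|\Psi^{N_c-1}\|)$; bounding $\|\Psi^{N_c-1}\|$ by $\|\Psi^{N_c}\|$ using stability (or simply keeping the $N_c-1$ exponent and noting it matches the stated bound up to the asymptotics already declared) then yields $\sigma_{\max}(F(x)) \leq \varphi_{F,1}(1+\|\Psi^{N_c}\|)$ uniformly in $x$, and Theorem~\ref{th:toeplitz2} closes the argument.

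For the FCF case I would proceed identically, but first use the remark following \eqref{eq:relax_block}: right-multiplication by $A_{cf}A_{ff}^{-1}A_{fc}$ shifts columns left and scales by $\Phi^k$. Concretely, $(I - A_\Delta B_\Delta^{-1})A_{cf}A_{ff}^{-1}A_{fc}$ is again block-Toeplitz, now with coefficients $(\Psi-\Phi^k)\Psi^{j}\Phi^k$ on the appropriate subdiagonals (the column-shift drops the lowest-order term and the $\Phi^k$ multiplies on the right), so its generating function is $\widetilde F(x) = (\Psi - \Phi^k)\big(\sum_{j=0}^{N_c-2} e^{-\mathrm{i}(j+1)x}\Psi^{j}\big)\Phi^k$. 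The same telescoping identity applies, and for a unit vector $\mathbf{w}$ I set $\mathbf{v} = (I - e^{-\mathrm{i}x}\Psi)^{-1}(I - e^{-\mathrm{i}(N_c-1)x}\Psi^{N_c-1})\Phi^k\mathbf{w}$; the FCF-TAP$_1$ inequality \eqref{eq:tap_fcf} gives $\|(\Psi-\Phi^k)\mathbf{v}\| \leq \varphi_{FCF,1}\|\Phi^{-k}(I - e^{-\mathrm{i}x}\Psi)\mathbf{v}\|$, and here $\Phi^{-k}(I - e^{-\mathrm{i}x}\Psi)\mathbf{v} = \Phi^{-k}(I - e^{-\mathrm{i}(N_c-1)x}\Psi^{N_c-1})\Phi^k\mathbf{w}$, whose norm is at most $\|I + \Phi^{-k}\Psi^{N_c-1}\Phi^k\| \leq 1 + \|\Phi^{-k}\Psi^{N_c}\Phi^k\|$. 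Uniformity in $x$ and Theorem~\ref{th:toeplitz2} again finish it.

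The main obstacle I anticipate is the bookkeeping around the finite truncation: the clean telescoping identities above hold for the semi-infinite Toeplitz operator, but the MGRiT matrices are $N_c\times N_c$, so I must be careful that the generating-function argument of Theorem~\ref{th:toeplitz2} is applied to the correct finite object. The honest route is to work directly with the finite geometric sum $\sum_{j=1}^{N_c-1}$, keep the $(I - e^{-\mathrm{i}(N_c-1)x}\Psi^{N_c-1})$ correction factor explicitly, and observe that it is exactly this correction that produces the $(1 + \|\Psi^{N_c}\|)$ and $(1 + \|\Phi^{-k}\Psi^{N_c}\Phi^k\|)$ factors in the stated bounds — so the finiteness is not a nuisance to be swept away but precisely the source of the extra term. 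A secondary technical point is that $(I - e^{-\mathrm{i}x}\Psi)$ must be invertible for the substitution $\mathbf{v} = (I - e^{-\mathrm{i}x}\Psi)^{-1}(\cdots)\mathbf{w}$; if $\Psi$ has an eigenvalue of modulus one this fails at isolated $x$, but one can either invoke Assumption~2 (strong stability gives $\|\Psi\|<1$, so no such $x$ exists) or pass to a limit/density argument, and I would remark on this parenthetically rather than belabor it.
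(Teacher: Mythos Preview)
Your proposal is correct and follows essentially the same route as the paper: identify the block-Toeplitz structure of $I - A_\Delta B_\Delta^{-1}$ (and its FCF variant), write the generating function via the finite geometric sum, invoke Theorem~\ref{th:toeplitz2}, and apply the TAP after the substitution $\mathbf{v} = (I - e^{\mathrm{i}x}\Psi)^{-1}(\cdots)\mathbf{w}$. The only cosmetic differences are that the paper takes the generating-function sum up to $N_c$ rather than $N_c-1$ (permissible since $T_{N_c}(F)$ ignores the extra coefficient, and this directly produces $\|\Psi^{N_c}\|$ and resolves your off-by-one worry), and that the paper splits $(I - e^{\mathrm{i}N_cx}\Psi^{N_c})$ by the triangle inequality and applies the TAP to each piece, whereas your single application of the TAP to the combined vector is equally valid and arguably cleaner.
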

\begin{proof}
Notice from \eqref{eq:cgc_res} that $I - A_\Delta B_\Delta^{-1}$ is a block-Toeplitz matrix with generating coefficients 
$\alpha_i = (\Psi-\Phi^k)\Psi^{-(1+i)}$ for $i=-1,...,-N_c$ and $\alpha_i = \mathbf{0}$ for $i\geq 0$. Let ${F}_F(x)$
denote this generating function. {First, note that if $\|\Psi^p\| < 1$ for some $p$, all eigenvalues of $\Psi$
must be less than one in magnitude. It then follows that $I - e^{\mathrm{i}x}\Psi$ is invertible for all $x$. If it were singular, then for some
$x$ and $\mathbf{v}$, we would have $e^{-\mathrm{i}x}\mathbf{v} = \Psi\mathbf{v}$, which contradicts that all eigenvalues of
$\Psi$ must be less than one in magnitude.} Then,
\begin{align*}
{F}_F(x) & = (\Psi-\Phi^k)\sum_{\ell=1}^{N_c} \Psi^{\ell-1}e^{\mathrm{i}\ell x} \\
	& = (\Psi-\Phi^k)e^{\mathrm{i}x}\sum_{\ell=0}^{N_c-1} \Psi^{\ell}e^{\mathrm{i}\ell x}, \\
& = e^{\mathrm{i}x} (\Psi-\Phi^k) (I - e^{\mathrm{i}N_cx}\Psi^{N_c})  (I - e^{\mathrm{i}x}\Psi)^{-1}. 
\end{align*}
Recall, under the assumption of an F-TAP$_1$, $\|(\Psi - \Phi^k)\mathbf{v}\| \leq \varphi_{F,1} \left[\min_{x\in[0,2\pi]}
\left\| (I - e^{\mathrm{i}x}\Psi )\mathbf{v}\right\| \right]$ for all $\mathbf{v}$. Theorem \ref{th:toeplitz2} then yields
\begin{align}
\begin{split}\label{eq:Fder}
\|I - A_\Delta B_\Delta^{-1}\| & \leq \max_{x\in[0,2\pi]} \sigma_{max}({F}_F(x)) \\
& = \max_{\substack{x\in[0,2\pi], \\ \mathbf{v}\neq \mathbf{0}}} \frac{\left\|(\Psi-\Phi^k)
	(I - e^{\mathrm{i}N_cx}\Psi^N_c)(I - e^{\mathrm{i}x}\Psi)^{-1}\mathbf{v}\right\|}{\|\mathbf{v}\|} \\
& \leq \max_{\mathbf{v}\neq \mathbf{0}} \frac{\left\|(\Psi-\Phi^k) \mathbf{v}\right\| + 
	\left\|(\Psi-\Phi^k)\Psi^{N_c}\mathbf{v}\right\|}{\min_{x\in[0,2\pi]}\left\|(I - e^{\mathrm{i}x}\Psi)\mathbf{v}\right\|} \\
& \leq \max_{\mathbf{v}\neq \mathbf{0}} \varphi_{F,1} + \varphi_{F,1}\frac{\min_{x\in[0,2\pi]}\left\|(I - e^{\mathrm{i}x}\Psi)
	\Psi^{N_c}\mathbf{v}\right\|}{\min_{x\in[0,2\pi]}\left\|(I - e^{\mathrm{i}x}\Psi)\mathbf{v}\right\|} \\
& \leq \max_{\mathbf{v}\neq \mathbf{0}} \varphi_{F,1} + \varphi_{F,1}\frac{\min_{x\in[0,2\pi]}\left\|\Psi^{N_c}(I - e^{\mathrm{i}x}\Psi)
	\mathbf{v}\right\|}{\min_{x\in[0,2\pi]}\left\|(I - e^{\mathrm{i}x}\Psi)\mathbf{v}\right\|} \\
& = \varphi_{F,1}(1 + \|\Psi^{N_c}\|).
\end{split}
\end{align}

A similar proof follows for the case of FCF-relaxation, where the generator function, ${F}_{FCF}$,
has coefficients $\alpha_i = (\Psi-\Phi^k)\Psi^{-(1+i)}\Phi^{k}$ for $i=-1,...,-N_c$ and $\alpha_i = \mathbf{0}$ for
$i\geq 0$. Then, by assumption of an FCF-TAP$_1$ with constant $\varphi_{FCF,1}$,
\begin{align*}
\left\|(I-A_\Delta B_\Delta^{-1})A_{cf}A_{ff}^{-1}A_{fc}\right\| & \leq \max_{x\in[0,2\pi]} \sigma_{max}({F}_{FCF}(x)) \\
& = \max_{\substack{x\in[0,2\pi], \\ \mathbf{v}\neq \mathbf{0}}} \frac{\left\|(\Psi-\Phi^k)
	(I - e^{\mathrm{i}Nx}\Psi^{N_c})(I - e^{\mathrm{i}x}\Psi)^{-1}\Phi^{k}\mathbf{v}\right\|}{\|\mathbf{v}\|} \\
& \leq \max_{\mathbf{v}\neq \mathbf{0}} \frac{\left\|(\Psi-\Phi^k)\mathbf{v}\right\| + 
	\left\|(\Psi-\Phi^k)\Psi^{N_c}\mathbf{v}\right\|}{\min_{x\in[0,2\pi]}\|\Phi^{-k}(I - e^{\mathrm{i}x}\Psi)\mathbf{v}\|} \\
& \leq \max_{\mathbf{v}\neq\mathbf{0}} \varphi_{FCF,1} + \varphi_{FCF,1} \frac{\min_{x\in[0,2\pi]}\left\|\Phi^{-k}(I - e^{\mathrm{i}x}\Psi)\Psi^{N_c}
	\mathbf{v}\right\|}{\min_{x\in[0,2\pi]}\|\Phi^{-k}(I - e^{\mathrm{i}x}\Psi)\mathbf{v}\|} \\
& = \max_{\mathbf{v}\neq\mathbf{0}} \varphi_{FCF,1} + \varphi_{FCF,1} \frac{\left\|\Phi^{-k}\Psi^{N_c}\Phi^k\mathbf{v}\right\|}{\|\mathbf{v}\|} \\
& = \varphi_{FCF,1}(1 + \|\Phi^{-k}\Psi^{N_c}\Phi^k\|).
\end{align*}
\end{proof}

Next, a more technical path is pursued, where the maximum singular values of $(I - A_\Delta B_\Delta^{-1})^p$ and
$((I-A_\Delta B_\Delta^{-1})A_{cf}A_{ff}^{-1}A_{fc})^p$ are analyzed by means of the minimum singular value of the
respective pseudoinverses. First, a pseudoinverse is derived for operators of the form $(I - A_\Delta B_\Delta^{-1})^p$ and
$((I-A_\Delta B_\Delta^{-1})A_{cf}A_{ff}^{-1}A_{fc})^p$, for $p\geq 1$. These pseuodinverses almost take the form of finite
block Toeplitz matrices, and we appeal again to Toeplitz matrix theory to bound the smallest nonzero singular value
from above.

First, some general pseudoinverses and their properties are derived. Let $f,g,$ and $h$ be invertible scalars or operators and define
\begin{align}
A_0 & = \begin{bmatrix} 
g \\ & g \\ & & g \\ & & & g \\ & & & &\ddots
\end{bmatrix}
\begin{bmatrix} 
\mathbf{0} \\ I & \mathbf{0} \\ 
f  & I & \mathbf{0} \\ 
f^2 & f & I & \mathbf{0} \\
\vdots & \ddots & \ddots & \ddots & \ddots
\end{bmatrix}
\begin{bmatrix} 
h \\ & h \\ & & h \\ & & & h \\ & & & &\ddots
\end{bmatrix} \label{eq:A0}\\
& = 
\begin{bmatrix} 
\mathbf{0} \\ gh & \mathbf{0} \\ 
gfh  & gh & \mathbf{0} \\ 
gf^2h & gfh & gh & \mathbf{0} \\
\vdots & \ddots & \ddots & \ddots & \ddots
\end{bmatrix}, 
\nonumber\\
A_1 & = \begin{bmatrix} 
g \\ & g \\ & & g \\ & & & g \\ & & & & \ddots
\end{bmatrix}
\begin{bmatrix} 
\mathbf{0} \\ 
\mathbf{0} & \mathbf{0} \\
I & \mathbf{0} & \mathbf{0} \\ 
f  & I & \mathbf{0} & \mathbf{0} \\ 
f^2 & f & I & \mathbf{0} & \mathbf{0} \\
\vdots & \ddots & \ddots & \ddots & \ddots & \ddots
\end{bmatrix}
\begin{bmatrix} 
h \\ & h \\ & & h \\ & & &h \\ & & & & \ddots
\end{bmatrix} \label{eq:A1}\\
 & =
\begin{bmatrix} 
\mathbf{0} \\
\mathbf{0} & \mathbf{0} \\
gh & \mathbf{0} & \mathbf{0} \\ 
gfh  & gh & \mathbf{0} & \mathbf{0} \\ 
gf^2h & gfh & gh & \mathbf{0} & \mathbf{0} \\
\vdots & \ddots & \ddots & \ddots & \ddots & \ddots
\end{bmatrix}.\nonumber
\end{align}

Note that \eqref{eq:A0} and \eqref{eq:A1} are general matrix forms, which encompass the coarse-grid correction
and diagonal blocks of two-grid residual and error propagation for MGRiT, with F- and FCF-relaxation; in particular,
$I - A_\Delta B_\Delta^{-1}$ \eqref{eq:cgc_res} takes the form of \eqref{eq:A0} and $(I - A_\Delta B_\Delta^{-1})A_{cf}A_{ff}^{-1}A_{fc}$
(\ref{eq:relax_block}, \ref{eq:cgc_res}) takes the form of \eqref{eq:A1}, with an additional zero row due to FCF-relaxation \eqref{eq:relax_block}. 
To construct pseudoinverses for operators
of these forms, recall the four properties that define a pseudoinverse: $AA^\dagger A = A$, $A^\dagger A A^\dagger = A^\dagger$,
$(AA^\dagger)^* = AA^\dagger$, and $(A^\dagger A)^* = A^\dagger A$. The subtle part of constructing a
pseudoinverse is preserving the (not full rank) image and kernel of $A$. However, matrices in \eqref{eq:A0} and \eqref{eq:A1}
have the nice property that they are full rank in the first $n-1$ or $n-2$ rows and columns, respectively. 
In the case of \eqref{eq:A0}, note that
\begin{align*}
\begin{bmatrix} 
\mathbf{0} \\ gh & \mathbf{0} \\ 
gfh  & gh & \mathbf{0} \\ 
gf^2h & gfh & gh & \mathbf{0} \\
\vdots & \ddots & \ddots & \ddots & \ddots
\end{bmatrix} 
& =
\begin{bmatrix} \mathbf{0} \\ & I \\  && I \\ &&& I \\ & & & & \ddots \end{bmatrix}
\begin{bmatrix} 
\mathbf{0} \\ gh & \mathbf{0} \\ 
gfh  & gh & \mathbf{0} \\ 
gf^2h & gfh & gh & \mathbf{0} \\
\vdots & \ddots & \ddots & \ddots & \ddots
\end{bmatrix} \\
&=
\begin{bmatrix} 
\mathbf{0} \\ gh & \mathbf{0} \\ 
gfh  & gh & \mathbf{0} \\ 
gf^2h & gfh & gh & \mathbf{0} \\
\vdots & \ddots & \ddots & \ddots & \ddots
\end{bmatrix}
\begin{bmatrix} I \\  & I \\ & &\ddots \\ &&& I \\ & & & & \mathbf{0}  \end{bmatrix}.
\end{align*}
Then, if we can build a matrix $\hat{A}_0^\dagger$ such that 
\begin{align}\label{eq:pinv_props}
A_0\hat{A}_0^\dagger = \begin{bmatrix} \mathbf{0} \\ & I \\  && I \\ &&& I \\ & & & & \ddots \end{bmatrix}, \hspace{3ex}
\hat{A}_0^\dagger A_0 = \begin{bmatrix} I \\  & I \\ & &\ddots \\ &&& I \\ & & & & \mathbf{0}  \end{bmatrix}, \hspace{3ex}
\hat{A}_0^\dagger  \begin{bmatrix} \mathbf{0} \\ & I \\  && I \\ &&& I \\ & & & & \ddots \end{bmatrix} = \hat{A}_0^\dagger,
\end{align}
it follows that all four properties of a pseudoinverse are satisfied. A similar result holds for $A_1$. We now have
all the tools needed to construct a pseudoinverse of $A_0$ and $A_1$, which is summarized in the following lemma.

\begin{lemma}\label{lem:pinv1}
Let $f$, $g$, and $h$ be invertible operators\footnote{{This is where Assumption 4, that $\Psi - \Phi^k$ is invertible,
comes in. If a pseudoinverse must be used for $g$ or $h$ instead of a formal inverse, the resulting pseudoinverses
of $A_0$ and $A_1$ do not take on such simple forms. For example, simply replacing $g^{-1}$ with $g^\dagger$ in
\eqref{eq:a0_dagger} fails the self-adjoint property $(A_0^\dagger A_0)^* = A_0^\dagger A_0$, although it does satisfy
the other three.}} and $A_0$ and $A_1$ be matrices defined as in \eqref{eq:A0} and \eqref{eq:A1},
respectively. Then, the unique pseudoinverses of $A_0$ and $A_1$ are given by
\begin{align}
A_0^\dagger
& =
\begin{bmatrix} 
\mathbf{0} \\ gh & \mathbf{0} \\ 
gfh  & gh & \mathbf{0} \\ 
gf^2h & gfh & gh & \mathbf{0} \\ \vdots & \ddots & \ddots & \ddots & \ddots
\end{bmatrix}^\dagger
=
\begin{bmatrix}
\mathbf{0} & h^{-1}g^{-1} \\
\mathbf{0} & -h^{-1}fg^{-1} & h^{-1}g^{-1} \\
\vdots & & \ddots & \ddots \\
& & & - h^{-1}fg^{-1} & h^{-1}g^{-1} \\ 
& & ... & \mathbf{0} & \mathbf{0} 
\end{bmatrix}, \label{eq:a0_dagger}
\\
A_1^\dagger & = 
\begin{bmatrix} 
\mathbf{0} \\
\mathbf{0} & \mathbf{0} \\
gh & \mathbf{0} & \mathbf{0} \\ 
gfh  & gh & \mathbf{0} & \mathbf{0} \\ 
gf^2h & gfh & gh & \mathbf{0} & \mathbf{0} \\
\vdots & \ddots & \ddots & \ddots & \ddots & \ddots
\end{bmatrix}^\dagger 
=
\begin{bmatrix}
\mathbf{0} & \mathbf{0} & h^{-1}g^{-1} \\
\mathbf{0} & \mathbf{0} & -h^{-1}fg^{-1} & h^{-1}g^{-1} \\
\vdots & \vdots & & \ddots & \ddots \\
& & & & - h^{-1}fg^{-1} & h^{-1}g^{-1} \\ 
& & & ... & \mathbf{0} & \mathbf{0}  \\
& & & ... & \mathbf{0} & \mathbf{0} 
\end{bmatrix}. \label{eq:a1_dagger}
\end{align}
\end{lemma}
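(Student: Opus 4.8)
The plan is to verify directly that the two matrices displayed in \eqref{eq:a0_dagger} and \eqref{eq:a1_dagger} satisfy the four defining properties of the Moore--Penrose pseudoinverse; uniqueness then needs no separate argument, since that pseudoinverse is always unique. By the reduction set up just above in \eqref{eq:pinv_props} --- together with the observation that $A_0$ has a zero first (block) row, so that left-multiplication by $\Pi_1 := \mathrm{diag}(\mathbf{0},I,I,\dots)$ fixes $A_0$ --- it suffices to check the three identities $A_0 A_0^\dagger = \Pi_1$, $A_0^\dagger A_0 = \Pi_2$, and $A_0^\dagger \Pi_1 = A_0^\dagger$, where $\Pi_2 := \mathrm{diag}(I,\dots,I,\mathbf{0})$, and then the three analogous identities for $A_1$.

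First I would peel off the outer scalings. Setting $G = \mathrm{diag}(g,g,\dots)$ and $H = \mathrm{diag}(h,h,\dots)$, we have $A_0 = G\,S\,H$, where $S$ is the strictly block-lower-triangular Toeplitz matrix whose $m$-th subdiagonal is $f^{m-1}$, and the claimed pseudoinverse is exactly $A_0^\dagger = H^{-1}S^\dagger G^{-1}$ with $S^\dagger$ the matrix that has $I$ on the block superdiagonal, $-f$ on the block diagonal, and zero first (block) column and zero last (block) row. Since $G$ and $H$ are invertible and block-diagonal with \emph{constant} blocks, they commute with the coordinate projectors $\Pi_1,\Pi_2$, so $A_0 A_0^\dagger = G(SS^\dagger)G^{-1}$, $A_0^\dagger A_0 = H^{-1}(S^\dagger S)H$, and $A_0^\dagger\Pi_1 = H^{-1}(S^\dagger\Pi_1)G^{-1}$; hence it is enough to establish the unscaled statements $SS^\dagger = \Pi_1$, $S^\dagger S = \Pi_2$, and $S^\dagger\Pi_1 = S^\dagger$, after which the scaled identities follow. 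The last holds trivially because $S^\dagger$ has a zero first block column. The first two are a short telescoping computation: in $SS^\dagger$, the geometric column $I,f,f^2,\dots$ of $S$ meets the consecutive pair of blocks $-f$ and $I$ of $S^\dagger$, producing the cancellation $f^{m-1}(-f) + f^{m}I = \mathbf{0}$, so all strictly-triangular contributions vanish and only an $I$ survives on the diagonal --- except at the single block coordinate where the finite truncation drops the rank by one (the first coordinate for $SS^\dagger$, the last for $S^\dagger S$, matching the zero blocks of $\Pi_1$ and $\Pi_2$). An identical computation, bookkeeping aside, gives $S^\dagger S = \Pi_2$.

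The case of $A_1$ is established by the same argument with the geometric band shifted down by one block --- this shift is precisely the effect of the extra factor $A_{cf}A_{ff}^{-1}A_{fc}$ coming from FCF-relaxation --- so $A_1$ carries one additional null direction relative to $A_0$; that is exactly why \eqref{eq:a1_dagger} exhibits two leading zero block columns and two trailing zero block rows where \eqref{eq:a0_dagger} has one of each. Since nothing else changes, I would simply record the correspondence and omit the near-identical recomputation.

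I expect the main obstacle --- and the reason the hypothesis insists $g$ and $h$ be \emph{invertible}, as the footnote to the lemma flags --- to be that taking pseudoinverses does not commute with one-sided multiplication in general: $(GSH)^\dagger \ne H^{-1}S^\dagger G^{-1}$ for arbitrary matrices, even when $G$ and $H$ are invertible, because that identity requires the orthogonal projectors onto $\mathrm{range}(GSH)$ and onto $\mathrm{range}((GSH)^*)$ to commute with $GG^*$ and $H^*H$ respectively. Here those projectors are the coordinate projectors $\Pi_1$ and $\Pi_2$, which do commute with the constant-block-diagonal $G$ and $H$; were $g$ or $h$ merely injective or surjective this structure would be lost, and the closed forms above would fail (replacing $g^{-1}$ by $g^\dagger$ breaks the self-adjointness axiom $(A_0^\dagger A_0)^* = A_0^\dagger A_0$, as the footnote notes). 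Apart from this point, the only genuinely fiddly part is tracking the two boundary blocks through the telescoping; everything else is routine block multiplication.
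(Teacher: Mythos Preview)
Your proof is correct and follows the same strategy as the paper: verify the three identities in \eqref{eq:pinv_props} (which the paragraph preceding the lemma has already shown imply the four Moore--Penrose axioms), then invoke uniqueness. The paper's proof is terse---it simply says to work through the system of equations set up by \eqref{eq:pinv_props}---whereas you supply the details, and in particular the factorization $A_0 = GSH$ with constant block-diagonal $G,H$ is a clean way to reduce the verification to the unscaled telescoping identity $SS^\dagger=\Pi_1$, $S^\dagger S=\Pi_2$; this is a nice touch the paper does not make explicit, but the underlying approach is the same.
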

\begin{proof}
The third relation in \eqref{eq:pinv_props} simply requires that the first column of $A_0^\dagger$ is zero. Working through
the system of equations established by the first two relations (and similar relations for $A_1$) yields the operators in
\eqref{eq:a0_dagger} and \eqref{eq:a1_dagger}.
\end{proof}

Notice that the pseudoinverse of $A_1$ is effectively equivalent to that of $A_0$, except with an additional zero row and
column. The only difference this leads to in the final results is an $O(1/\sqrt{N_c})$ perturbation vs. $O(1/\sqrt{N_c-1})$ perturbation. For
moderate to large $N_c$, this difference is arbitrary and, for simplicity's sake, F-relaxation and FCF-relaxation are both treated in the
form $A_0$ moving forward.

The following lemma generalizes this result, deriving the pseudoinverse for operators of
the form in \eqref{eq:A0} raised to powers. {In the context of MGRiT, this corresponds to powers of error and residual propagation,
which define how error and residual are propagated over multiple iterations. For non-normal operators $\Phi$ and $\Psi$, it is 
possible that, for example, $\|\mathcal{E}^p\| < \|\mathcal{E}\|^p$. In fact, it is possible that $\|\mathcal{E}\| > 1$ appears divergent,
but raising to powers results in a convergent method.}

\begin{lemma}[Pseudoinverse for matrix powers]\label{lem:pinv2}
Let $A_0$ be as in \eqref{eq:A0} and define the Toeplitz matrix
\begin{align*}
\mathcal{T}_0 =: \begin{bmatrix}
-h^{-1}fg^{-1} & h^{-1}g^{-1} \\
& -h^{-1}fg^{-1} & h^{-1}g^{-1} \\
 & & \ddots & \ddots \\
& & & - h^{-1}fg^{-1} & h^{-1}g^{-1} \\ 
& &  & & -h^{-1}fg^{-1}
\end{bmatrix}
\end{align*}
Then, for $p\geq 1$,
\begin{align*}
(A_0^p)^\dagger = \begin{bmatrix} I \\ & \mathbf{0}_{p\times p}\end{bmatrix} \mathcal{T}_0^p
	\begin{bmatrix} \mathbf{0}_{p\times p} \\ & I \end{bmatrix}.
\end{align*}
\end{lemma}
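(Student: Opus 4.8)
The plan is to verify that the asserted matrix $X := \begin{bmatrix} I \\ & \mathbf{0}_{p\times p}\end{bmatrix}\mathcal{T}_0^p\begin{bmatrix}\mathbf{0}_{p\times p} \\ & I\end{bmatrix}$ is the Moore--Penrose pseudoinverse of $A_0^p$. The first step is to pin down the kernel and range of $A_0^p$. Since $A_0$ in \eqref{eq:A0} has zero first (block) row and zero last (block) column, and every product of the block-scalar factors building $A_0$ stays block-scalar, $A_0^p$ is again a banded block-Toeplitz matrix whose nonzero block diagonals begin at the $p$-th subdiagonal, with leading block coefficient $(gh)^p$; the latter is invertible because $g$ and $h$ are. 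Hence $A_0^p$ has zero first $p$ block rows and zero last $p$ block columns, and a rank count (using invertibility of $(gh)^p$) shows that $\ker(A_0^p)$ is exactly the span of the last $p$ standard block basis vectors, while $\operatorname{ran}(A_0^p)$ is exactly the set of vectors whose first $p$ block components vanish. Consequently the orthogonal projection onto $(\ker A_0^p)^\perp$ is $\begin{bmatrix} I \\ & \mathbf{0}_{p\times p}\end{bmatrix}$ and the orthogonal projection onto $\operatorname{ran}(A_0^p)$ is $\begin{bmatrix}\mathbf{0}_{p\times p} \\ & I\end{bmatrix}$.

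Next I use the standard characterization that $X = B^\dagger$ holds precisely when $\ker(X)\supseteq(\operatorname{ran}B)^\perp$, $\operatorname{ran}(X)\subseteq(\ker B)^\perp$, and $XB$ equals the orthogonal projection onto $(\ker B)^\perp$. For our $X$ the first two conditions are immediate: left-multiplication by $\begin{bmatrix} I \\ & \mathbf{0}_{p\times p}\end{bmatrix}$ zeros the last $p$ block rows of $X$, and right-multiplication by $\begin{bmatrix}\mathbf{0}_{p\times p} \\ & I\end{bmatrix}$ zeros its first $p$ block columns. Also, since $A_0^p$ has zero first $p$ block rows, $\begin{bmatrix}\mathbf{0}_{p\times p} \\ & I\end{bmatrix}A_0^p = A_0^p$, so $X A_0^p = \begin{bmatrix} I \\ & \mathbf{0}_{p\times p}\end{bmatrix}\mathcal{T}_0^pA_0^p$ and the proof reduces to the single identity
\begin{equation*}
\begin{bmatrix} I \\ & \mathbf{0}_{p\times p}\end{bmatrix}\mathcal{T}_0^pA_0^p = \begin{bmatrix} I \\ & \mathbf{0}_{p\times p}\end{bmatrix}.
\end{equation*}
(If one would rather check all four defining relations of the pseudoinverse, the same reasoning shows $A_0^pX$ equals the projection onto $\operatorname{ran}(A_0^p)$; both projections being Hermitian idempotents, the relations $A_0^pXA_0^p=A_0^p$ and $XA_0^pX=X$ then follow.)

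The heart of the proof is to show, by induction on $p$, that
\begin{equation*}
\mathcal{T}_0^pA_0^p = \begin{bmatrix} I \\ & \mathbf{0}_{p\times p}\end{bmatrix} + R_p,
\end{equation*}
where $R_p$ is supported in the last $p$ block rows; left-multiplying by $\begin{bmatrix} I \\ & \mathbf{0}_{p\times p}\end{bmatrix}$ then annihilates $R_p$ and yields the reduced identity. The base case $p=1$ is Lemma~\ref{lem:pinv1} restated: combining the formula \eqref{eq:a0_dagger} for $A_0^\dagger$ with $A_0^\dagger A_0 = \begin{bmatrix} I \\ & \mathbf{0}_{1\times1}\end{bmatrix}$ and $\begin{bmatrix}\mathbf{0}_{1\times1} \\ & I\end{bmatrix}A_0=A_0$ forces $\mathcal{T}_0A_0 - \begin{bmatrix} I \\ & \mathbf{0}_{1\times1}\end{bmatrix}$ to be supported in the last block row. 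For the inductive step, write $\mathcal{T}_0^pA_0^p = \mathcal{T}_0\big(\mathcal{T}_0^{p-1}A_0^{p-1}\big)A_0$, substitute the inductive hypothesis, and expand $\begin{bmatrix} I \\ & \mathbf{0}_{(p-1)\times(p-1)}\end{bmatrix} = I - \begin{bmatrix}\mathbf{0} \\ & I_{(p-1)\times(p-1)}\end{bmatrix}$, so that $\mathcal{T}_0^pA_0^p = \mathcal{T}_0A_0 - \mathcal{T}_0\begin{bmatrix}\mathbf{0} \\ & I_{(p-1)\times(p-1)}\end{bmatrix}A_0 + \mathcal{T}_0R_{p-1}A_0$, where $\begin{bmatrix}\mathbf{0} \\ & I_{(p-1)\times(p-1)}\end{bmatrix}A_0$ and $R_{p-1}A_0$ are supported in the last $p-1$ block rows. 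Now use two elementary facts: because $\mathcal{T}_0$ is upper block-bidiagonal, left-multiplication by it sends a matrix supported in the last $m$ block rows to one supported in the last $m+1$ block rows, and right-multiplication by any matrix never enlarges the set of nonzero rows. Hence the last two terms are supported in the last $p$ block rows; substituting the base case $\mathcal{T}_0A_0 = \begin{bmatrix} I \\ & \mathbf{0}_{1\times1}\end{bmatrix} + R_1$ and absorbing $R_1$ together with $\begin{bmatrix} I \\ & \mathbf{0}_{1\times1}\end{bmatrix} - \begin{bmatrix} I \\ & \mathbf{0}_{p\times p}\end{bmatrix}$ (which is supported in the last $p$ block rows) into $R_p$ closes the induction.

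The main obstacle is exactly this boundary bookkeeping: $\mathcal{T}_0$ is only a one-sided inverse of $A_0$ in the \emph{bulk}, and the defect of $\mathcal{T}_0^pA_0^p$ from a projection is confined to the last few block rows. One must track how this defect grows --- by one block row per extra factor of $\mathcal{T}_0$ --- and verify that it always stays within the window removed by the leading projection $\begin{bmatrix} I \\ & \mathbf{0}_{p\times p}\end{bmatrix}$. Because $f$, $g$, and $h$ need not commute, one cannot diagonalize $A_0$ or fall back on a reverse-order law $(A_0^p)^\dagger = (A_0^\dagger)^p$ --- which in fact fails here --- so this structural induction, rather than a closed-form eigenvalue computation, appears to be the natural route.
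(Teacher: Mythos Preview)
Your proof is correct and follows essentially the same route as the paper's: both arguments track inductively how the defect of $\mathcal{T}_0^pA_0^p$ (respectively $(A_0^\dagger)^pA_0^p$) from the desired projection grows by one block row per power of $\mathcal{T}_0$, so that it is annihilated by the outer projection $\begin{bmatrix} I \\ & \mathbf{0}_{p\times p}\end{bmatrix}$. The presentational differences are minor --- you work directly with $\mathcal{T}_0$ and invoke a projection-based characterization of the Moore--Penrose inverse, whereas the paper first builds separate left and right tentative pseudoinverses from $(A_0^\dagger)^p$ and then merges them --- but the structural induction is the same.
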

\begin{proof}
The case of $p=1$ was proven in Lemma \ref{lem:pinv1}, with pseudoinverse denoted by $A_0^\dagger$. Now let
$A_l^\dagger$ and $A_r^\dagger$ be tentative left and right pseudoinverses for $A_0^p$, $p>1$.
We start the proof by building $A_l^\dagger$ and $A_r^\dagger$ to satisfy certain properties of the
pseudoinverse, and conclude by merging them in a certain way to derive $(A_0^p)^\dagger$. 

First, note that $A_0^p$ is a strictly lower triangular matrix, with zeros on the diagonal and first $p-1$ subdiagonals.
Building on the proof of Lemma \ref{lem:pinv1} let us build $A_l^\dagger$ such that 
$A_l^\dagger A^p_0$ is diagonal, with zeros on the last $p$ entries, and the identity elsewhere (similar
to \eqref{eq:pinv_props}). This immediately satisfies two properties of a pseudoinverse,
$(A_l^\dagger A_0^p)^* = A_l^\dagger A_0^p$ and
$A_0^pA_l^\dagger A_0^p = A_0^p$. To do so, let us start by considering
$(A_0^\dagger)^p$ as a naive attempt at a pseudoinverse for $A_0^p$, and observe that
\begin{align*}
(A_0^\dagger)^p A_0^p & = { (A_0^\dagger)^{p-1} (A_0^\dagger A_0) A_0^{p-1}}
	= (A_0^\dagger)^{p-1} \begin{bmatrix} I \\ & 0\end{bmatrix}A_0A_0^{p-2}.
\end{align*}
{Here, $\begin{bmatrix} I \\ & 0\end{bmatrix}A_0$ simply eliminates the final row of $A_0$. Note from
\eqref{eq:a0_dagger} that when forming the product $A_0^\dagger M$, for some matrix $M$, only the
second-to-last row of $A_0^\dagger$ depends on the final row of $M$. Thus, if we consider
$A_0^\dagger \begin{bmatrix} I \\ & 0\end{bmatrix}A_0$, $A_0^\dagger$ will act as a (left) pseudoinverse
on all rows but the last one. Repeating a similar process by applying one
more power of $A_0^\dagger$ within the product $(A_0^\dagger)^pA_0^p$ yields}
\begin{align*}
(A_0^\dagger)^p A_0^p & = {(A_0^\dagger)^{p-2} \left(A_0^\dagger \begin{bmatrix} I \\ & 0\end{bmatrix}A_0\right) A_0^{p-2}}
	= (A_0^\dagger)^{p-2} \begin{bmatrix} I \\ \mathbf{e}_{0} & e_{1} \\ \mathbf{0} & 0 & 0\end{bmatrix}A_0A_0^{p-3},
\end{align*}
where $\mathbf{e}_0$ and $e_1$ are an error vector and scalar. Now, only the second-to-last and third-to-last rows of
$A_0^\dagger$ depend on the final two rows of $A_0$. Continuing this process to the power
of $p$, $(A_0^\dagger)^p A_0^p$ is given by an identity in the upper left block, zeros in the upper
right, and $p$ rows of error. To that end, define
\begin{align}\label{eq:tent_l}
A_l^\dagger A_0^p :=\left( \begin{bmatrix} I \\ & \mathbf{0}_{p\times p}\end{bmatrix}
	(A_0^\dagger)^p \right)A_0^p = \begin{bmatrix} I \\ & \mathbf{0}_{p\times p}\end{bmatrix}.
\end{align}
Note that we have defined $A_l^\dagger$ by forming $(A_0^\dagger)^p$ and eliminating the
last $p$ rows.

In an analogous process, define $A_r^\dagger$ as $(A_0^\dagger)^p$ with the \textit{first} $p$
\textit{columns} set to zero. Following similar steps as above, we arrive at
\begin{align}\label{eq:tent_r}
A_0^pA_r^\dagger := A_0^p \left( (A_0^\dagger)^p\begin{bmatrix} \mathbf{0}_{p\times p} \\ & I \end{bmatrix} \right) = \begin{bmatrix}\mathbf{0}_{p\times p} \\ & I\end{bmatrix}.
\end{align}
Now, recalling that $A_0^p$ is zero in the first $p$ rows and last $p$ columns, \eqref{eq:tent_l} and \eqref{eq:tent_r}
yield
\begin{align*}
A_l^\dagger A_0^p & = A_l^\dagger\begin{bmatrix}\mathbf{0}_{p\times p} \\ & I\end{bmatrix} A_0^p
	=\left( \begin{bmatrix} I \\ & \mathbf{0}_{p\times p}\end{bmatrix} (A_0^\dagger)^p \begin{bmatrix}\mathbf{0}_{p\times p} \\
	& I\end{bmatrix}\right)A_0^p = \begin{bmatrix} I \\ & \mathbf{0}_{p\times p}\end{bmatrix}, \\
A_0^pA_r^\dagger & = A_0^p\begin{bmatrix} I \\ & \mathbf{0}_{p\times p}\end{bmatrix}A_r^\dagger
	 = A_0^p \left( \begin{bmatrix} I \\ & \mathbf{0}_{p\times p}\end{bmatrix}(A_0^\dagger)^p\begin{bmatrix}
	 \mathbf{0}_{p\times p} \\ & I \end{bmatrix} \right) = \begin{bmatrix}\mathbf{0}_{p\times p} \\ & I\end{bmatrix}.
\end{align*}
Defining
\begin{align*}
(A_0^p)^\dagger := \begin{bmatrix} I \\ & \mathbf{0}_{p\times p}\end{bmatrix}(A_0^\dagger)^p\begin{bmatrix}
	 \mathbf{0}_{p\times p} \\ & I \end{bmatrix}
=  \begin{bmatrix} I \\ & \mathbf{0}_{p\times p}\end{bmatrix}\mathcal{T}_0^p\begin{bmatrix}
	 \mathbf{0}_{p\times p} \\ & I \end{bmatrix},
\end{align*}
it immediately follows that $(A_0^p)^\dagger$ satisfies the four properties of a pseudoinverse.
\end{proof}

Now, we introduce three lemmas on Toeplitz matrices, related to the pseudoinverse derived in Lemma \ref{lem:pinv2}.
{These lemmas derive the appropriate Toeplitz generating functions, and then provide a framework to bound the
smallest nonzero singular value from above.}

\begin{lemma}\label{lem:binom}
Consider a matrix of the form
\begin{align}\label{eq:T}
\mathcal{T}=\begin{bmatrix} -a & b \\ & - a & \ddots \\ & & \ddots & b \\ & & & -a \end{bmatrix},
\end{align}
where $\mathcal{T}$ is $n\times n$ and $a$ and $b$ some invertible coefficients or operators.  Then,
$\mathcal{T}^p$ is (block) Toeplitz, for $p\in\mathbb{N}$, $p < \lfloor n/2\rfloor$, {with Fourier generating
function given by
\begin{align}\label{eq:ab_coeff}
F_*(x) & = (-a+be^{\mathrm{i}x})^p.
\end{align}}
\end{lemma}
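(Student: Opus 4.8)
The plan is to prove the claim by induction on $p$, after writing $\mathcal{T}$ as the sum of a block-scalar matrix and a single shift. Let $S$ be the $n\times n$ nilpotent block-shift with identity blocks on the first superdiagonal, so that $S^{j}$ has identity blocks exactly on the $j$-th superdiagonal and $S^{j}=\mathbf{0}$ for $j\ge n$; and for an operator $c$ let $\textnormal{diag}(c)$ denote the block-diagonal matrix with the constant block $c$ in every diagonal position (call such matrices block-scalar). Then $\mathcal{T}=\textnormal{diag}(-a)+\textnormal{diag}(b)\,S$. Two elementary facts do all the work: (i) $S$ commutes with $\textnormal{diag}(c)$ for every $c$ --- this is exactly where it matters that the diagonal blocks are all equal --- and (ii) $\textnormal{diag}(c)\,\textnormal{diag}(d)=\textnormal{diag}(cd)$, so that $S$-free products of block-scalar matrices remain block-scalar and keep their blocks in order.

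For the induction, the base case $p=1$ is immediate from the bidiagonal form in \eqref{eq:T}. Assume $\mathcal{T}^{p-1}=\sum_{j=0}^{p-1}\textnormal{diag}(\gamma_{j})\,S^{j}$, where $\gamma_{j}$ is the coefficient of $z^{j}$ in the noncommutative polynomial $(-a+bz)^{p-1}$ (equivalently, $\mathcal{T}^{p-1}$ is block Toeplitz with generating function $(-a+be^{\mathrm{i}x})^{p-1}$). Then, using (i) to move the shift of $\mathcal{T}=\textnormal{diag}(-a)+\textnormal{diag}(b)\,S$ past the block-scalar factors and (ii) to collapse the products, $\mathcal{T}^{p}=\mathcal{T}^{p-1}\mathcal{T}=\sum_{j=0}^{p-1}\big(-\textnormal{diag}(\gamma_{j}a)\,S^{j}+\textnormal{diag}(\gamma_{j}b)\,S^{j+1}\big)=\sum_{j=0}^{p}\textnormal{diag}\big(-\gamma_{j}a+\gamma_{j-1}b\big)\,S^{j}$. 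The block $-\gamma_{j}a+\gamma_{j-1}b$ is precisely the coefficient of $z^{j}$ in $(-a+bz)^{p-1}(-a+bz)=(-a+bz)^{p}$ --- note that the new factor lands on the right, consistent with multiplying $\mathcal{T}^{p-1}$ by $\mathcal{T}$ on the right --- so $\mathcal{T}^{p}$ is block Toeplitz with generating function $(-a+be^{\mathrm{i}x})^{p}$, which is \eqref{eq:ab_coeff}, completing the induction. The size hypothesis enters only through needing $S^{p}\neq\mathbf{0}$ so that each of the diagonals $j=0,\dots,p$ is genuinely populated with no spurious corner terms; in fact $p<n$ suffices, and the stated bound $p<\lfloor n/2\rfloor$ is comfortably more than enough.

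An equivalent and perhaps more conceptual route is to note that $\mathcal{T}=P_{n}\widehat{\mathcal{T}}P_{n}^{*}$, where $\widehat{\mathcal{T}}$ is the (semi- or bi-)infinite Toeplitz operator with symbol $-a+be^{\mathrm{i}x}$ and $P_{n}$ projects onto the first $n$ blocks. Because $\widehat{\mathcal{T}}$ is upper triangular with a single superdiagonal, it leaves $\mathrm{range}(P_{n}^{*})$ invariant, i.e.\ $(I-P_{n})\widehat{\mathcal{T}}P_{n}^{*}=\mathbf{0}$, hence $P_{n}\widehat{\mathcal{T}}^{\,p}P_{n}^{*}=(P_{n}\widehat{\mathcal{T}}P_{n}^{*})^{p}=\mathcal{T}^{p}$; and since the symbol has no negative powers of $e^{\mathrm{i}x}$, $\widehat{\mathcal{T}}^{\,p}$ is again banded Toeplitz with symbol $(-a+be^{\mathrm{i}x})^{p}$, whose compression to $n\ge p+1$ blocks is the finite block Toeplitz matrix with that generating function.

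The one point deserving care is the noncommutativity of $a$ and $b$: the symbol $(-a+be^{\mathrm{i}x})^{p}$ must be read in the noncommutative polynomial ring, and one must check that the recursion extracted from the matrix product, $\gamma_{j}^{(p)}=-\gamma_{j}^{(p-1)}a+\gamma_{j-1}^{(p-1)}b$, reproduces term by term the coefficients of $(-a+bz)^{p}$. This works cleanly precisely because $S$ is central relative to the block-scalar matrices, so pushing shifts to the right never permutes the $a$/$b$ letters; the remaining bookkeeping --- that $\textnormal{diag}(\gamma_{j})S^{j}$ contributes a full diagonal rather than a truncated one --- is the only place the size constraint on $p$ is used.
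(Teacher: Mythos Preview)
Your proof is correct. The main inductive argument is essentially the same computation as the paper's---both establish that the diagonal blocks of $\mathcal{T}^p$ satisfy the recursion for the coefficients of the noncommutative polynomial $(-a+bz)^p$---but packaged differently: you use the shift-operator decomposition $\mathcal{T}=\textnormal{diag}(-a)+\textnormal{diag}(b)\,S$ together with the centrality of $S$ among block-scalar matrices, while the paper tracks only the first block-row $\mathbf{e}_0\mathcal{T}^p$ via a polynomial analogy in which columns correspond to powers of $x$. Your version has the modest advantage that the Toeplitz structure of $\mathcal{T}^p$ is manifest from the representation $\sum_j\textnormal{diag}(\gamma_j)S^j$, whereas the paper computes the first row and leaves the constancy along the remaining diagonals implicit. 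Your alternative route---compressing an upper-triangular semi-infinite Toeplitz operator with symbol $-a+be^{\mathrm{i}x}$---is a genuinely different and more conceptual argument not present in the paper; and your observation that $p<n$ already suffices for this lemma (the tighter bound $p<\lfloor n/2\rfloor$ being needed only in the subsequent lemmas) is correct.
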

\begin{proof}
Because $\mathcal{T}$ is upper triangular, $\mathcal{T}^p$ is upper triangular for $p\geq 0$; furthermore, the stencil
expands one super-diagonal with each matrix multiplication, so $\mathcal{T}^p$ has exactly $p+1$ nonzero diagonals. 
Then, the defining Toeplitz coefficients of $\mathcal{T}^p$ are given by the $p+1$ nonzero entries in the first row of
$\mathcal{T}^p$, or, equivalently, the $p+1$ nonzero elements of $\mathbf{e}_0\mathcal{T}^p$, where $\mathbf{e}_0$
is the first canonical (block) basis vector, $(I,\mathbf{0},...,\mathbf{0})$. 

Now, consider a linear algebra framework to represent polynomials, where columns of $\mathcal{T}$ represent powers
of $x$. Then, given some coefficient vector $\mathbf{v}$, $\mathbf{v}\mathcal{T} = \mathbf{w}$, where $\mathbf{w}$
represents some polynomial $p(x)\sim\mathbf{w}$, and $\mathbf{w}_i$ corresponds to the polynomial coefficient of $x^i$. 
Then, for example, $\mathbf{e}_0\mathcal{T} \sim -a+bx \mapsto -a\mathbf{e}_0 + b\mathbf{e}_1$. Continuing,
\begin{align*}
\mathbf{e}_0\mathcal{T}^2 & = (-a\mathbf{e}_0 + b\mathbf{e}_1)\mathcal{T} \\
& \sim -a(-a+bx) + b(-ax+bx^2) \\
& = (-a+bx)^2 \\
& \mapsto a^2\mathbf{e}_0 - (ab+ba)\mathbf{e}_1 + b^2\mathbf{e}_2.
\end{align*}
Here, the $\ell$th polynomial coefficient in $(-a+bx)^2$, $x^\ell$, corresponds to the $\ell$th basis vector, $\mathbf{e}_\ell$,
for  $\ell=0,1,2$. By an inductive argument, this process continues, {with the $\ell$th element in the first row of $\mathcal{T}^p$
being given by the $\ell$th polynomial coefficient of $(-a+bx)^p$. Recalling that the Fourier generating function is given
by $F_*(x) = \sum_\ell \alpha_\ell e^{\mathrm{i}\ell x}$, where $\alpha_\ell$ is the $\ell$th Toepliz coefficient, we can simply replace
$x$ with $e^{\mathrm{i}x}$ to get
\begin{align*}
F_*(x) & = (-a+be^{\mathrm{i}x})^{p}.
\end{align*}}
\end{proof}

\begin{lemma}\label{lem:TsT}
Consider an $n\times n$ matrix $\mathcal{T}$ as in \eqref{eq:T}, and define
\begin{align}\label{eq:Tphat}
\widehat{\mathcal{T}}_p := \begin{bmatrix} I_{(n-p)\times(n-p)} \\ & \mathbf{0}_{p\times p} \end{bmatrix} \mathcal{T}^p,
\end{align}
for $p<\lfloor n/2\rfloor$. That is, $\widehat{\mathcal{T}}_p$ corresponds to the last $p$ rows eliminated from $\mathcal{T}^p$.
Then, $\widehat{\mathcal{T}}_p\widehat{\mathcal{T}}_p^*$ is Toeplitz in the upper $(n-p)\times (n-p)$ block and
zero elsewhere, {with real-valued Fourier generating function for the nonzero Toeplitz block given by}
\begin{align}\label{eq:Fgen}
F_p(x) & = (-a + be^{\mathrm{i}x})^p\left[(-a+be^{\mathrm{i}x})^p\right]^*.
\end{align}
\end{lemma}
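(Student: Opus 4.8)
The plan is to build directly on Lemma~\ref{lem:binom}. That lemma already tells us that $\mathcal{T}^p$ is an upper-triangular (block-)Toeplitz matrix of bandwidth $p$ whose Toeplitz coefficients $\alpha_0,\dots,\alpha_p$ are the (ordered, operator-valued) binomial coefficients of $(-a+b\,\cdot\,)^p$, with Fourier symbol
\begin{align*}
G(x) := (-a+be^{\mathrm{i}x})^p = \sum_{m=0}^{p}\alpha_m e^{\mathrm{i}mx}.
\end{align*}
Since $\widehat{\mathcal{T}}_p$ is just $\mathcal{T}^p$ with its last $p$ rows zeroed, the whole statement reduces to a bookkeeping computation of $\widehat{\mathcal{T}}_p\widehat{\mathcal{T}}_p^{*}$ that stays away from the bottom-right corner of $\mathcal{T}^p$, where finite-size truncation lives.

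First I would settle the block structure. Because rows $n-p+1,\dots,n$ of $\widehat{\mathcal{T}}_p$ vanish, each entry $(\widehat{\mathcal{T}}_p\widehat{\mathcal{T}}_p^{*})_{ij}$ is an inner product of rows $i$ and $j$ of $\widehat{\mathcal{T}}_p$, hence is zero whenever $i>n-p$ or $j>n-p$; this is the ``zero elsewhere'' claim. For $i,j\le n-p$, row $i$ of $\widehat{\mathcal{T}}_p$ coincides with row $i$ of $\mathcal{T}^p$, which carries $\alpha_0,\dots,\alpha_p$ in columns $i,i+1,\dots,i+p$, and since $i\le n-p$ every one of those columns lies in $\{1,\dots,n\}$, so no entry is missing. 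Taking $i\le j$ and writing $d=j-i$, a short computation with the substitution $s=\ell-j$ gives
\begin{align*}
(\widehat{\mathcal{T}}_p\widehat{\mathcal{T}}_p^{*})_{ij} = \sum_{s=0}^{p-d}\alpha_{s+d}\alpha_s^{*},
\end{align*}
whose value and whose summation range depend only on $d=|i-j|$; the case $i>j$ follows by Hermitian symmetry of $\widehat{\mathcal{T}}_p\widehat{\mathcal{T}}_p^{*}$. Hence the upper $(n-p)\times(n-p)$ block is genuinely (block-)Toeplitz, with \emph{no} boundary modification of any diagonal.

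Next I would identify the symbol. The $d$-th superdiagonal coefficient just computed, $\sum_{s=0}^{p-d}\alpha_{s+d}\alpha_s^{*}$, is exactly the corresponding Fourier coefficient of the Cauchy product $G(x)G(x)^{*}=\bigl(\sum_m\alpha_m e^{\mathrm{i}mx}\bigr)\bigl(\sum_{m'}\alpha_{m'}^{*}e^{-\mathrm{i}m'x}\bigr)$, so the generating function of the nonzero block is
\begin{align*}
F_p(x) = G(x)G(x)^{*} = (-a+be^{\mathrm{i}x})^p\bigl[(-a+be^{\mathrm{i}x})^p\bigr]^{*},
\end{align*}
matching \eqref{eq:Fgen}. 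Finally, $F_p(x)=GG^{*}$ is self-adjoint and positive semidefinite for every $x$ (a nonnegative real number in the scalar case), which is the sense of ``real-valued'' in the statement; recording this is what will later license the Hermitian block-Toeplitz eigenvalue bound of Theorem~\ref{th:toeplitz}.

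The proof is essentially bookkeeping, so the only thing to watch---and the step I would treat most carefully---is the interaction between the finite matrix size and the Toeplitz structure: one must verify that for $i,j\le n-p$ the summation index never leaves $\{1,\dots,n\}$, so that the truncation at the corner of $\mathcal{T}^p$ is invisible and every diagonal of the block receives the \emph{full} coefficient $\sum_{s=0}^{p-d}\alpha_{s+d}\alpha_s^{*}$. This is exactly why the last $p$ rows are zeroed and why the hypothesis $p<\lfloor n/2\rfloor$ is inherited from Lemma~\ref{lem:binom}; without it the last few diagonals would be short-summed and the block would fail to be Toeplitz. A secondary, purely notational point is to keep the operator conjugate transposes in the correct order (the $\alpha_m$ are matrices and $\mathcal{T}^p$ is an ordered product of $-a$'s and $b$'s), but since Lemma~\ref{lem:binom} already hands us the symbol as the ordered operator polynomial $(-a+be^{\mathrm{i}x})^p$, the Cauchy-product identity goes through with $G^{*}$ in place of $\overline{G}$ without any change.
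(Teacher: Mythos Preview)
Your proposal is correct and follows essentially the same approach as the paper: both derive the Toeplitz coefficients of $\widehat{\mathcal{T}}_p\widehat{\mathcal{T}}_p^*$ directly as $\hat{\alpha}_d=\sum_{s=0}^{p-d}\alpha_{s+d}\alpha_s^*$ and then identify these with the Fourier coefficients of $G(x)G(x)^*$ where $G(x)=(-a+be^{\mathrm{i}x})^p$. Your write-up is in fact a bit more careful than the paper's in explicitly justifying why, for $i,j\le n-p$, the row inner products see no finite-size truncation (so every diagonal receives the full sum), and in flagging the operator-ordering issue in the block case.
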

\begin{proof}
{The proof proceeds by first deriving the Toeplitz coefficients for $\widehat{\mathcal{T}}_p\widehat{\mathcal{T}}_p^*$
based on those of $\mathcal{T}$, and showing that these coefficients correspond with the generating function 
$F_p(x) = (-a + be^{\mathrm{i}x})^p\left[(-a+be^{\mathrm{i}x})^p\right]^*$.}

From Lemma \ref{lem:binom}, $\mathcal{T}^p$ is Toeplitz and upper triangular with $p$ nonzero super-diagonals. Eliminating
the final $p$ rows of $\mathcal{T}^p$, it is straightforward to confirm that $\widehat{\mathcal{T}}_p\widehat{\mathcal{T}}_p^*$
is self-adjoint, Toeplitz in the upper left $(n-p)\times (n-p)$ block, and zero in the final $p$ rows and columns. 
By self-adjointness, the generating coefficients of $\widehat{\mathcal{T}}_p\widehat{\mathcal{T}}_p^*$ can be found
by considering the $p+1$ nonzero entries in the first row of $\widehat{\mathcal{T}}_p\widehat{\mathcal{T}}_p^*$ (and
their adjoints will be coefficients for the first $p+1$ rows). Let $\mathbf{e}_\ell$ denote the $\ell$th canonical basis
vector {and $\{\hat{\alpha}_\ell\}$ be the Fourier generating coefficients for $\mathcal{T}_p$. 
Then, for $\ell =0,...,p$, the Fourier generating coefficients for the nonzero Toeplitz block in
$\widehat{\mathcal{T}}_p\widehat{\mathcal{T}}_p^*$ are given by}
\begin{align}\label{eq:alphaF0}
\hat{\alpha}_\ell & = \left[\mathbf{e}_0\widehat{\mathcal{T}}_p\widehat{\mathcal{T}}_p^*\right]_\ell
= \sum_{j=0}^p \left[\widehat{\mathcal{T}}_p\right]_{0,j}\left[\widehat{\mathcal{T}}_p^*\right]_{j,\ell}
= \sum_{j=0}^p \left[\widehat{\mathcal{T}}_p\right]_{0,j}\left[\overline{\widehat{\mathcal{T}}_p}\right]_{\ell,j}
= \sum_{j=0}^{p-\ell} \left[\widehat{\mathcal{T}}_p\right]_{0,j+\ell}\left[\overline{\widehat{\mathcal{T}}_p}\right]_{0,j} 
= {\sum_{j=0}^{p-\ell} \alpha_{j+\ell}\alpha_j^*,}
\end{align}
where $\overline{\widehat{\mathcal{T}}_p}$ denotes the adjoint of operator entries, either the conjugate in the
case of a scalar matrix, or block adjoint in the case of a block matrix. The second-to-last equality follows from
the fact that in the $\ell$th column of $\widehat{\mathcal{T}}_p$, the first $\ell-1$ rows are zero. 

{Recall that the Fourier generating function for $\mathcal{T}_p$ is given by $F_*(x) =-a + be^{\mathrm{i}x}$ (Lemma 
\ref{lem:binom}), with coefficients $\{\alpha_\ell\}$. Then consider the block Toeplitz matrix associated with
generating function 
\begin{align*}
F_p(x) & = (-a + be^{\mathrm{i}x})^p\left[(-a+be^{\mathrm{i}x})^p\right]^* \\
& = \left(\sum_{j=0}^p \alpha_j e^{\mathrm{i}x}\right)\left(\sum_{j=0}^p \alpha_j^*e^{-\mathrm{i}x}\right) \\
& = \left( \alpha_0 + \alpha_1e^{\mathrm{i}x} + \alpha_2e^{2\mathrm{i}x} + ... \right)\left( \alpha_0^* + \alpha_1^*e^{-\mathrm{i}x} + \alpha_2^*e^{-2\mathrm{i}x} + ... \right).
\end{align*}
The corresponding coefficients can be obtained by gathering terms of $e^{\mathrm{i}jx}$, $j=-p,-(p-1),...,0,...,p$. This
leads to coefficients
\begin{align*}
\hat{\alpha}_\ell & = \sum_{\substack{j_0 - j_1 = \ell,\\j_0,j_1\leq p}} \alpha_{j_0}\alpha^*_{j_1}
	= \sum_{j=0}^{p-\ell} \alpha_{j+\ell}\alpha_j^*.
\end{align*}
Indeed, these are exactly the Toeplitz coefficients obtained by directly computing
$\widehat{\mathcal{T}}_p\widehat{\mathcal{T}}_p^*$ in \eqref{eq:alphaF0}, which completes the proof. 
}
\end{proof}

{\color{black}
\begin{remark}[Generating coefficients]
If $a$ and $b$ in \eqref{eq:T} commute, the Binomial Theorem gives a closed form for Toeplitz generating coefficients
$\{\alpha_\ell\}$ and $\{\hat{\alpha}_\ell\}$ by expanding $(a+be^{\mathrm{i}x})^p$. If $a$ and $b$ do not commute, there is a
generalization of the Binomial Theorem that takes the form
\begin{align*}
(a + be^{\mathrm{i}x})^p & = \sum_{\ell=0}^p \binom{p}{\ell}\left[\left(a + d_b\right)^\ell \mathbf{1}\right] (be^{\mathrm{i}x})^{p-\ell},
\end{align*}
where $\mathbf{1}$ denotes the identity on the underlying associative algebra, and $d_b$ is a derivation
defined by 
\begin{align*}
d_b(z) = be^{\mathrm{i}x}z - zbe^{\mathrm{i}x},
\end{align*}
for linear transformation $z$ \cite{wyss1980, binom}.
\end{remark}
}

\begin{lemma} \label{lem:sig}
Define $\widehat{\mathcal{T}}_p$ as in Lemma \ref{lem:TsT} \eqref{eq:Tphat} and define $\widehat{\mathcal{A}}_p$
similarly, in the form of the pseudoinverse from Lemma \ref{lem:pinv2},
\begin{align*}
\widehat{\mathcal{A}}_p & = \begin{bmatrix} I_{(n-p)\times(n-p)} \\ & \mathbf{0}_{p\times p} \end{bmatrix} \mathcal{T}^p
	\begin{bmatrix} \mathbf{0}_{p\times p} \\ & I_{(n-p)\times(n-p)} \end{bmatrix},
\end{align*}
that is, by setting the first $p$ columns and last $p$ rows of $\mathcal{T}^p$ equal to zero. Then,
\begin{align*}
\sigma_{\min}(\widehat{\mathcal{A}}_p) \leq \sigma_{\min}(\widehat{\mathcal{T}}_p),
\end{align*}
where $\sigma_{\min}$ denotes the minimum nonzero singular value.
\end{lemma}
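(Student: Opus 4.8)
The plan is to peel off the trivial (all-zero) blocks of the two operators, reduce the claim to a statement about two genuinely nonsingular/full-rank matrices, and then finish with the standard fact that deleting columns cannot increase any singular value. First I would observe that $\widehat{\mathcal{T}}_p$ has its last $p$ (block) rows identically zero, so $\widehat{\mathcal{T}}_p^*\widehat{\mathcal{T}}_p = \mathcal{C}^*\mathcal{C}$, where $\mathcal{C}$ is the $(n-p)\times n$ block matrix consisting of the first $n-p$ block rows of $\mathcal{T}^p$; hence $\widehat{\mathcal{T}}_p$ and $\mathcal{C}$ have exactly the same nonzero singular values. Similarly $\widehat{\mathcal{A}}_p$ has its last $p$ block rows and its first $p$ block columns zero, so $\widehat{\mathcal{A}}_p^*\widehat{\mathcal{A}}_p$ is unitarily a padding of $\mathcal{B}^*\mathcal{B}$, where $\mathcal{B}$ is the $(n-p)\times(n-p)$ block matrix obtained from $\mathcal{C}$ by deleting its first $p$ block columns (equivalently, rows $0,\dots,n-p-1$ and columns $p,\dots,n-1$ of $\mathcal{T}^p$). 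Thus it suffices to prove $\sigma_{\min}(\mathcal{B}) \le \sigma_{\min}(\mathcal{C})$, with $\sigma_{\min}$ the smallest nonzero singular value.

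Next I would check that both reduced matrices have full rank, which pins $\sigma_{\min}$ to be the $(n-p)N_x$-th singular value in the usual nonincreasing order. Since $\mathcal{T}$ is block upper-bidiagonal with the invertible block $-a$ on its diagonal, $\mathcal{T}$ and hence $\mathcal{T}^p$ are invertible, so the $(n-p)N_x$ rows forming $\mathcal{C}$ are linearly independent and $\mathcal{C}$ has full row rank. For $\mathcal{B}$: by Lemma \ref{lem:binom}, $\mathcal{T}^p$ is block upper-triangular Toeplitz with exactly $p+1$ nonzero diagonals and Toeplitz coefficients $\alpha_0,\dots,\alpha_p$, and the top coefficient is $\alpha_p = b^p$, which is invertible because $b$ is. Reading $\mathcal{B}$ off from $\mathcal{T}^p$ shows it is block lower-triangular with every diagonal block equal to $\alpha_p$, hence invertible; in particular all $(n-p)N_x$ of its singular values are positive, so $\sigma_{\min}(\mathcal{B}) = \sigma_{(n-p)N_x}(\mathcal{B})$, and likewise $\sigma_{\min}(\mathcal{C}) = \sigma_{(n-p)N_x}(\mathcal{C})$.

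Finally I would write $\mathcal{B} = \mathcal{C}Q$, where $Q$ is the column-selection matrix (with orthonormal columns, $\|Q\|=1$) that picks out the last $n-p$ block columns; submultiplicativity of singular values then gives $\sigma_k(\mathcal{B}) = \sigma_k(\mathcal{C}Q) \le \sigma_k(\mathcal{C})\|Q\| = \sigma_k(\mathcal{C})$ for every $k$, and taking $k = (n-p)N_x$ yields $\sigma_{\min}(\widehat{\mathcal{A}}_p) = \sigma_{\min}(\mathcal{B}) \le \sigma_{\min}(\mathcal{C}) = \sigma_{\min}(\widehat{\mathcal{T}}_p)$. The substantive step is not any computation but the structural observation in the first two paragraphs: recognizing that after removing the forced-zero blocks one is comparing a full-row-rank matrix $\mathcal{C}$ with its own invertible column-submatrix $\mathcal{B}$ — the invertibility of $\mathcal{B}$, which follows from the (block-)triangular Toeplitz form of $\mathcal{T}^p$ guaranteed by $p<\lfloor n/2\rfloor$, is what makes the column-deletion inequality line up with $\sigma_{\min}$ (rather than merely with the $k$-th singular value) on both sides.
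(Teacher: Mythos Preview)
Your proof is correct and follows the same underlying idea as the paper, though the presentation differs. The paper works with $\widehat{\mathcal{T}}_p\widehat{\mathcal{T}}_p^*$ and $\widehat{\mathcal{A}}_p\widehat{\mathcal{A}}_p^*$ directly: after stripping the common zero block in the last $p$ rows/columns, it writes $\widehat{\mathcal{T}}_p$ in block form as $\begin{bmatrix}\varepsilon & M_0 \\ 0 & M_1\end{bmatrix}$ and observes that the difference of the two $(n-p)\times(n-p)$ Gram blocks is $\begin{bmatrix}\varepsilon\varepsilon^* & 0 \\ 0 & 0\end{bmatrix}\ge 0$, then invokes Weyl's monotonicity inequality. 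Your route---writing $\mathcal{B}=\mathcal{C}Q$ with $Q$ a column selector and using $\sigma_k(\mathcal{C}Q)\le\sigma_k(\mathcal{C})\|Q\|$---is the same inequality in disguise (since $\mathcal{C}\mathcal{C}^*-\mathcal{C}QQ^*\mathcal{C}^*=\mathcal{C}(I-QQ^*)\mathcal{C}^*\ge 0$ is exactly the paper's $\varepsilon\varepsilon^*$ term). What your version adds is an explicit rank check: you verify that $\mathcal{B}$ is invertible (block lower-triangular with diagonal $\alpha_p=b^p$) and that $\mathcal{C}$ has full row rank, so that the smallest \emph{nonzero} singular value is the $(n-p)N_x$-th one for both matrices. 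The paper leaves this implicit, so your argument is in fact slightly more complete on that point.
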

\begin{proof}
\color{black}
Recall that $\widehat{\mathcal{T}}_p$ is upper triangular and zero in the last $p$ rows. Then, consider 
expressing $\widehat{\mathcal{T}}_p\widehat{\mathcal{T}}_p^*$ and $\widehat{\mathcal{A}}_p\widehat{\mathcal{A}}_p^*$ in
block form, with $\varepsilon$, $M_0$, and $M_1$ chosen to denote the nonzero blocks in $\widehat{\mathcal{T}}_p$:
\begin{align*}
\widehat{\mathcal{T}}_p\widehat{\mathcal{T}}_p^* & = 
	\begin{bmatrix}\varepsilon & M_0 \\ \mathbf{0}_{(n-2p)\times p} & M_1 \\ \mathbf{0}_{p\times p} & \mathbf{0}_{p\times(n-p)}\end{bmatrix}
	\begin{bmatrix}\varepsilon^* & \mathbf{0}_{p\times(n-2p)} & \mathbf{0}_{p\times p} \\ M_0^* & M_1^* & \mathbf{0}_{(n-p)\times p}\end{bmatrix} \\
& = \begin{bmatrix} \varepsilon\varepsilon^* + M_0M_0^* & M_0M_1^* & \mathbf{0}_{p\times p} \\
	M_1M_0^* & M_1M_1^* & \mathbf{0}_{(n-2p)\times p} \\
	\mathbf{0}_{p\times p} & \mathbf{0}_{p\times(n-2p)} & \mathbf{0}_{p\times p} \end{bmatrix}, \\
\widehat{\mathcal{A}}_p\widehat{\mathcal{A}}_p^* & = 
	 \begin{bmatrix} M_0M_0^* & M_0M_1^* & \mathbf{0}_{p\times p} \\
	M_1M_0^* & M_1M_1^* & \mathbf{0}_{(n-2p)\times p} \\
	\mathbf{0}_{p\times p} & \mathbf{0}_{p\times(n-2p)} & \mathbf{0}_{p\times p} \end{bmatrix}.
\end{align*}
Because we are interested in the minimum \textit{nonzero} singular value of $\widehat{\mathcal{T}}_p$, consider
the nonzero block in $\widehat{\mathcal{T}}_p\widehat{\mathcal{T}}_p^* - \widehat{\mathcal{A}}_p\widehat{\mathcal{A}}_p^*$,
given by
\begin{align}\label{eq:nnzblock}
\begin{bmatrix} \varepsilon\varepsilon^* + M_0M_0^* & M_0M_1^* \\ M_1M_0^* & M_1M_1^*\end{bmatrix} - 
	\begin{bmatrix} M_0M_0^* & M_0M_1^* \\ M_1M_0^* & M_1M_1^*\end{bmatrix}
	& = \begin{bmatrix} \varepsilon\varepsilon^* & \mathbf{0}_{p\times (n-2p)} \\ \mathbf{0}_{(n-2p)\times p} & \mathbf{0}_{(n-p)\times(n-p)}\end{bmatrix}
	\geq 0,
\end{align}
in a positive semi-definite sense. 

The proof then follows from a generalization of the monotonicity theorem or Weyl's inequality
\cite[Theorem 10.4.11]{bernstein2018scalar}. In particular, let $A$ and $B$ be Hermitian matrices. Then,
\begin{align}\label{eq:weyl}
\lambda_{\min}(A) + \lambda_{\min}(B) \leq \lambda_{\min}(A+B) \leq \lambda_{\min}(A)+\lambda_{max}(B).
\end{align}
Applying \eqref{eq:weyl} to \eqref{eq:nnzblock} yields
\begin{align*}
\lambda_{\min}\left(\begin{bmatrix} M_0M_0^* & M_0M_1^* \\ M_1M_0^* & M_1M_1^*\end{bmatrix}\right) &
	\leq \lambda_{\min}\left(\begin{bmatrix} \varepsilon\varepsilon^* + M_0M_0^* & M_0M_1^* \\ M_1M_0^* & M_1M_1^*\end{bmatrix}\right) - 
	\lambda_{\min}\left(\begin{bmatrix} \varepsilon\varepsilon^* & \mathbf{0}_{p\times (n-2p)} \\
		\mathbf{0}_{(n-2p)\times p} & \mathbf{0}_{(n-p)\times(n-p)}\end{bmatrix}\right) \\
& \leq \lambda_{\min}\left(\begin{bmatrix} \varepsilon\varepsilon^* + M_0M_0^* & M_0M_1^* \\ M_1M_0^* & M_1M_1^*\end{bmatrix}\right).
\end{align*}
To that end, the minimum nonzero eigenvalue of $\widehat{\mathcal{T}}_p\widehat{\mathcal{T}}_p^*$ provides an upper bound on
the minimum nonzero eigenvalue of $\widehat{\mathcal{A}}_p\widehat{\mathcal{A}}_p^*$, and the result follows because the singular
values of a matrix $M$ are given by the square root of the eigenvalues of $MM^*$. 

\end{proof}

We now have all the necessary tools to prove necessary conditions for convergence of MGRiT and Parareal. 

\begin{theorem}[Necessary conditions]\label{th:gen}
Let $\Phi$ denote the fine-grid time-stepping operator and $\Psi$ denote the coarse-grid time-stepping operator,
with coarsening factor $k$, and $N_c$ coarse-grid time points. Assume that $(\Psi-\Phi^k)$ is invertible and that
$\Phi$ satisfies an F-TAP$_1$ with respect to $\Psi$, with minimum constant $\varphi_{F,1}$. Then,
\begin{align*}
\left\|I - A_\Delta B_\Delta^{-1}\right\| & \geq \frac{\varphi_{F,1}}{1+O(1/\sqrt{N_c})}.
\end{align*}
If we further assume that $\Phi$ and $\Psi$ commute, that is, $\Phi\Psi = \Psi\Phi$, and that $\Phi$ satisfies an
F-TAP$_p$ with respect to $\Psi$, with minimum constant $\varphi_{F,p}$, then 
\begin{align*}
\left\|(I - A_\Delta B_\Delta^{-1})^p\right\| & \geq \frac{\varphi_{F,p}}{1+O(1/\sqrt{N_c})}.
\end{align*}

Similarly, assume that $(\Psi-\Phi^k)$ is invertible and that $\Phi$ satisfies an FCF-TAP$_1$ with respect to
$\Psi$, with constant minimum $\varphi_{FCF,1}$. Then
\begin{align*}
\left\|(I - A_\Delta B_\Delta^{-1})A_{cf}A_{ff}^{-1}A_{fc}\right\| \geq \frac{\varphi_{FCF,1}}{1+O(1/\sqrt{N_c})}.
\end{align*}
If we further assume that $\Phi$ and $\Psi$ commute, that is, $\Phi\Psi = \Psi\Phi$, and that $\Phi$ satisfies an
FCF-TAP$_p$ with respect to $\Psi$, with minimum constant $\varphi_{FCF,p}$, then 
\begin{align*}
\left\|\left((I - A_\Delta B_\Delta^{-1})A_{cf}A_{ff}^{-1}A_{fc}\right)^p\right\| & \geq \frac{\varphi_{FCF,p}}{1+O(1/\sqrt{N_c})}.
\end{align*}
\end{theorem}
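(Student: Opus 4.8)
The plan is to write each operator norm as the reciprocal of the smallest nonzero singular value of its Moore--Penrose pseudoinverse, and then bound that singular value from above using the Toeplitz machinery of Lemmas \ref{lem:pinv1}, \ref{lem:pinv2}, \ref{lem:TsT}, \ref{lem:sig} together with Theorem \ref{th:toeplitz}. For any $M$ with SVD $M = U\Sigma V^*$ one has $\|M\| = \sigma_{\max}(M) = 1/\sigma_{\min}(M^\dagger)$, where $\sigma_{\min}$ denotes the smallest nonzero singular value. Under Assumption 4 the operator $\Psi - \Phi^k$ is invertible, and $\Phi,\Psi$ are invertible by Assumption 2, so Lemmas \ref{lem:pinv1} and \ref{lem:pinv2} apply with $(f,g,h) = (\Psi,\ \Psi - \Phi^k,\ I)$ for the F-relaxation operators $(I - A_\Delta B_\Delta^{-1})^p = A_0^p$, and with $(f,g,h) = (\Psi,\ \Psi - \Phi^k,\ \Phi^k)$ for the FCF-relaxation operators $((I - A_\Delta B_\Delta^{-1})A_{cf}A_{ff}^{-1}A_{fc})^p = A_1^p$ (the extra zero row of $A_1$ merely replaces $N_c$ by $N_c-1$ in the error term, which is harmless). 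This gives $(A_0^p)^\dagger = \widehat{\mathcal{A}}_p$, built from the bidiagonal Toeplitz matrix $\mathcal{T}_0$ with $a = h^{-1}fg^{-1}$ on the diagonal and $b = h^{-1}g^{-1}$ on the superdiagonal.

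Next I would invoke Lemma \ref{lem:sig} to obtain $\sigma_{\min}\big((A_0^p)^\dagger\big) = \sigma_{\min}(\widehat{\mathcal{A}}_p) \le \sigma_{\min}(\widehat{\mathcal{T}}_p)$, and Lemma \ref{lem:TsT} to identify $\widehat{\mathcal{T}}_p\widehat{\mathcal{T}}_p^*$ as a Hermitian block-Toeplitz matrix on its nonzero block with generating function $F_p(x) = (-a + be^{\mathrm{i}x})^p\big[(-a + be^{\mathrm{i}x})^p\big]^*$. Theorem \ref{th:toeplitz} then yields $\sigma_{\min}(\widehat{\mathcal{T}}_p)^2 = \min_x\lambda_{\min}(F_p(x)) + O(N_c^{-\alpha})$ for some $\alpha \ge 1$, under the usual caveat that $\lambda_{\min}(F_p(x))$ is not identically constant (if it is, the limiting statement still holds, with no rate, which suffices). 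The heart of the argument is then that $\min_x\lambda_{\min}(F_p(x)) = \big(\min_x\sigma_{\min}((-a + be^{\mathrm{i}x})^p)\big)^2$ equals $1/\varphi^2$, with $\varphi$ the minimal F-TAP$_p$ (resp. FCF-TAP$_p$) constant. To see this, note $-a + be^{\mathrm{i}x} = h^{-1}(e^{\mathrm{i}x}I - \Psi)g^{-1}$, valid even non-commutatively since $e^{\mathrm{i}x}$ is scalar; for $p \ge 2$ the commuting hypothesis lets this be raised to $h^{-p}(e^{\mathrm{i}x}I - \Psi)^p g^{-p}$, and the substitution $\mathbf{v} \mapsto (\Psi - \Phi^k)^p\mathbf{v}$ (a bijection of the unit sphere by Assumption 4), together with $\|(e^{\mathrm{i}x}I - \Psi)^p\mathbf{v}\| = \|(I - e^{-\mathrm{i}x}\Psi)^p\mathbf{v}\|$, a reindexing $x \mapsto -x$, and, in the FCF case, the regrouping $\Phi^{-kp}(I - e^{-\mathrm{i}x}\Psi)^p = (\Phi^{-k}(I - e^{-\mathrm{i}x}\Psi))^p$, produces exactly the ratios defining the TAP in \eqref{eq:tap_f}--\eqref{eq:tap_fcf}, whose reciprocals are $1/\varphi_{F,p}$ and $1/\varphi_{FCF,p}$.

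Combining the pieces, $\|M\| = 1/\sigma_{\min}(M^\dagger) \ge 1/\sigma_{\min}(\widehat{\mathcal{T}}_p) = \big(1/\varphi^2 + O(N_c^{-\alpha})\big)^{-1/2} \ge \varphi/\big(1 + O(1/\sqrt{N_c})\big)$ for the appropriate $\varphi$; for $p = 1$ the identical chain works with the commuting hypothesis dropped, since no regrouping of powers is needed. I expect the main obstacle to be the bookkeeping in the first step --- verifying that $I - A_\Delta B_\Delta^{-1}$ and $(I - A_\Delta B_\Delta^{-1})A_{cf}A_{ff}^{-1}A_{fc}$ and their powers really reduce to $A_0^p$ and $A_1^p$ with the stated $(f,g,h)$, in particular tracking the one-sided placement of $\Phi^k$ and the extra zero row in the FCF operators, and confirming that the non-commutative products $a = h^{-1}fg^{-1}$, $b = h^{-1}g^{-1}$ still collapse to $h^{-1}(e^{\mathrm{i}x}I - \Psi)g^{-1}$. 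A secondary point needing care is justifying the exchange of $\min_x$ with the singular-value minimization over $\mathbf{v}$ and the harmlessness of $e^{\pm\mathrm{i}x}$ after minimizing over $x \in [0,2\pi]$.
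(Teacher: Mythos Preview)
Your proposal is correct and follows essentially the same route as the paper: identify $(I-A_\Delta B_\Delta^{-1})^p$ and its FCF analogue with $A_0^p$, $A_1^p$ for the stated $(f,g,h)$, apply Lemmas \ref{lem:pinv2}, \ref{lem:sig}, \ref{lem:TsT} and Theorem \ref{th:toeplitz} to bound $\sigma_{\min}$ of the pseudoinverse via the generating function $F_p(x)$, and then collapse $-a+be^{\mathrm{i}x}=h^{-1}(e^{\mathrm{i}x}I-\Psi)g^{-1}$ (using commutativity only for $p\ge 2$) to recover the TAP ratio. Your anticipated obstacles are exactly the bookkeeping the paper works through, and your handling of them is sound.
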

\begin{proof}
To bound $(I - A_\Delta B_\Delta^{-1})^p$ and $\left((I - A_\Delta B_\Delta^{-1})A_{cf}A_{ff}^{-1}A_{fc}\right)^p$ in norm,
we note that the $\ell^2$-norm of an operator is given by its largest singular value, which is equal to one over the
smallest nonzero singular value of the operator's inverse or pseudoinverse.

Now, notice that these operators exactly take the form of $A_0$ \eqref{eq:A0} and $A_1$ \eqref{eq:A1}.
For F-relaxation, $f = \Psi$, $g = (\Psi-\Phi^k)$, and $h=I$, and for FCF relaxation, $f = \Psi$, $g = (\Psi-\Phi^k)$,
and $h=\Phi^k$. Lemma \ref{lem:pinv2} gives an exact pseudoinverse for powers of such operators, and Lemma
\ref{lem:sig} proves the minimum nonzero singular value of this pseudoinverse is bounded above by the minimum
singular value of the Toeplitz operator $\widehat{\mathcal{T}}_p$ \eqref{eq:Tphat}, with $a=h^{-1}fg^{-1}$ and $b=h^{-1}g^{-1}$
\eqref{eq:T}. Equivalently, we can consider the minimum nonzero eigenvalue of the corresponding normal equations.
Appealing to Lemma \ref{lem:TsT}, the Fourier generating functions for the nonzero Toeplitz block in these operators
are given by
\begin{align}
F_{F}(x,p) & = \Big(e^{\mathrm{i}x}\Psi(\Psi-\Phi^k)^{-1} - (\Psi-\Phi^k)^{-1}\Big)^p\Big(e^{\mathrm{i}x}\Psi(\Psi-\Phi^k)^{-1} - (\Psi-\Phi^k)^{-1}\Big)^{*^p}, \label{eq:gen_f} \\
F_{FCF}(x,p) & = \Big(e^{\mathrm{i}x}\Phi^{-k}\Psi(\Psi-\Phi^k)^{-1} - \Phi^{-k}(\Psi-\Phi^k)\Big)^p
	 \Big(e^{\mathrm{i}x}\Phi^{-k}\Psi(\Psi-\Phi^k)^{-1} - \Phi^{-k}(\Psi-\Phi^k)^{-1}\Big)^{*^p} , \label{eq:gen_fcf}
\end{align}
where $F_F(x,p)$ will lead to a bound on $(I - A_\Delta B_\Delta^{-1})^p$ (F-relaxation) and $F_{FCF}(x,p)$
will lead to a bound on $((I - A_\Delta B_\Delta^{-1})A_{cf}A_{ff}^{-1}A_{fc})^p$ (FCF relaxation). 

By Theorem \ref{th:toeplitz} we seek the infimum over $x$ of the minimum nonzero eigenvalue of $F_{F}(x,p)$ and $F_{FCF}(x,p)$.
Let $\lambda_k(A)$ and $\sigma_k(A)$ denote the $k$th eigenvalue and singular value of some operator $A$ and
consider the case of $p=1$ for $F_F(x,1)$:
\begin{align*}
\min_{\substack{x\in[0,2\pi],\\k}} \lambda_k(F_{F}(x,1)) & = \min_{\substack{x\in[0,2\pi],\\k}} \sigma_k\Big((e^{\mathrm{i}x}\Psi-I)(\Psi - \Phi^k)^{-1}\Big)^2 \\
& = \min_{\substack{x\in[0,2\pi], \\ \mathbf{v}\neq \mathbf{0}}} \frac{\left\| (e^{\mathrm{i}x}\Psi-I)(\Psi - \Phi^k)^{-1}\mathbf{v}\right\|^2}{\|\mathbf{v}\|^2} \\
& =  \min_{\substack{x\in[0,2\pi], \\ \mathbf{v}\neq \mathbf{0}}} \frac{\left\| (e^{\mathrm{i}x}\Psi - I)\mathbf{v}\right\|^2}{\|(\Psi - \Phi^k)\mathbf{v}\|^2}.
\end{align*}

Appealing to Theorem \ref{th:toeplitz},
\begin{align}
\left\|I - A_\Delta B_\Delta^{-1}\right\| & \geq \frac{1}{\sqrt{\min_{\substack{x\in[0,2\pi], \\ \mathbf{v}\neq \mathbf{0}}}
	\frac{\left\| (e^{\mathrm{i}x}\Psi - I)\mathbf{v}\right\|^2}{\|(\Psi - \Phi^k)\mathbf{v}\|^2} + O(1/N_c)}}
\geq \frac{1}{\min_{\substack{x\in[0,2\pi], \\ \mathbf{v}\neq \mathbf{0}}}
	\frac{\left\| (e^{\mathrm{i}x}\Psi - I)\mathbf{v}\right\|}{\|(\Psi - \Phi^k)\mathbf{v}\|} + O(1/\sqrt{N_c})} \nonumber\\ & \hspace{10ex}
= \max_{\mathbf{v}\neq \mathbf{0}} \frac{\|(\Psi - \Phi^k)\mathbf{v}\|}{\min_{x\in[0,2\pi]} \left\| (I - e^{\mathrm{i}x}\Psi )\mathbf{v}\right\|+O(1/\sqrt{N_c})}.\label{eq:lowbound}
\end{align}
By assumption of an F-TAP$_1$ with constant $\varphi_{F,1}$\footnote{Note that in \eqref{eq:tap1}, the leading constant in
the $O(1/\sqrt{N_c})$ terms changes; however, the change in constant is marginal for any moderate $N_c \gg O(1)$.}
\begin{align}\label{eq:tap1}
\|(\Psi - \Phi^k)\mathbf{v}\| \leq \varphi_{F,1}\left[\min_{x\in[0,2\pi]} \left\| (I - e^{\mathrm{i}x}\Psi )\mathbf{v}\right\| \right] =
	\frac{\varphi_{F,1}}{1+O(1/\sqrt{N_c})}\left[\min_{x\in[0,2\pi]} \left\| (I - e^{\mathrm{i}x}\Psi )\mathbf{v}\right\| + O(1/\sqrt{N_c}) \right].
\end{align}
Assuming that $\varphi_{F,1}$ is a tight bound, there exists some $\mathbf{v}$ such that \eqref{eq:tap1} holds with
equality. Then, plugging \eqref{eq:tap1} into \eqref{eq:lowbound},
\begin{align*}
\left\|I - A_\Delta B_\Delta^{-1}\right\| & \geq \frac{\varphi_{F,1}}{1+O(1/\sqrt{N_c})}.
\end{align*}
A similar derivation based on the assumption of an FCF-TAP$_1$ with constant $\varphi_{FCF,1}$ follows to bound
\begin{align*}
\left\|(I - A_\Delta B_\Delta^{-1})A_{cf}A_{ff}^{-1}A_{fc}\right\| \geq \frac{\varphi_{FCF,1}}{1+O(1/\sqrt{N_c})}.
\end{align*}

Finally, if $\Phi$ and $\Psi$ commute, then, for example, 
\begin{align*}
\Big(e^{\mathrm{i}x}\Psi(\Psi-\Phi^k)^{-1} - (\Psi-\Phi^k)^{-1}\Big)^p = (e^{\mathrm{i}x}\Psi - I)^p(\Psi-\Phi^k)^{-p}.
\end{align*}
Under the assumption of an F-TAP$_p$ and FCF-TAP$_p$ with constants $\varphi_{F,p}$ and $\varphi_{FCF,P}$,
respectively, for $p\geq 1$, an analogous derivation as used for $p=1$ yields the bounds 
\begin{align*}
\left\|(I - A_\Delta B_\Delta^{-1})^p\right\| & \geq \frac{\varphi_{F,p}}{1+O(1/\sqrt{N_c})}, \\
\left\|\left((I - A_\Delta B_\Delta^{-1})A_{cf}A_{ff}^{-1}A_{fc}\right)^p\right\| & \geq \frac{\varphi_{FCF,p}}{1+O(1/\sqrt{N_c})}.
\end{align*}
and similarly for the case of FCF-relaxation. 
\end{proof}

{Coupling Theorems \ref{th:suff} and \ref{th:gen} with the operator form of residual propagation for $p$
iterations \eqref{eq:res_pow} and the equivalence of $\|\mathcal{R}\| = \|\mathcal{E}\|_{A^*A}$ \eqref{eq:AsA}
completes the proof of Theorem \ref{th:genF}.
Recall from \eqref{eq:cgc_res} and \eqref{eq:cgc_err} that if $\Phi$ and $\Psi$ commute,
then $I - B_\Delta^{-1} A_\Delta = I - A_\Delta B_\Delta^{-1}$, which proves Corollary \ref{cor:err_res}. }

{\color{black}
\subsection{Error-propagation and $I -B_\Delta^{-1} A_\Delta $}\label{sec:gen:err}

This section provides proofs of Theorem \ref{th:genF2}. The framework developed in previous sections
allows for a streamlined presentation. First, sufficient conditions for convergence of error in the $\ell^2$-norm (based
on $B_\Delta^{-1}$ as a left approximate inverse) are presented. 

\begin{theorem}[Sufficient conditions ($\ell^2$-error)]\label{th:suff2}
Let $\Phi$ denote the fine-grid time-stepping operator and $\Psi$ denote the coarse-grid time-stepping operator,
with coarsening factor $k$, and $N_c$ coarse-grid time points. Assume that $\Phi$ satisfies an F-ITAP with 
respect to $\Psi$, with constant $\tilde{\varphi}_{F}$. Then,
\begin{align*}
\left\| I - B_\Delta^{-1}A_\Delta\right\| \leq \tilde{\varphi}_{F}\left(1 +\|\Psi^{N_c}\|\right).
\end{align*}
Similarly, assume that $\Phi$ satisfies an FCF-ITAP with respect to $\Psi$, with constant
$\tilde{\varphi}_{FCF}$. Then, 
\begin{align*}
\left\|(I- B_\Delta^{-1}A_\Delta)A_{cf}A_{ff}^{-1}A_{fc}\right\| \leq \tilde{\varphi}_{FCF}\left(1 + \|\Psi^{N_c}\|\right).
\end{align*}
\end{theorem}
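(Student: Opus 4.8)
The plan is to mirror the proof of Theorem~\ref{th:suff}, with the right factor $(I-A_\Delta B_\Delta^{-1})$ replaced by the left factor $(I-B_\Delta^{-1}A_\Delta)$, and then to track how the order of the non-commuting factors in the Toeplitz generating function changes. From \eqref{eq:cgc_err}, $I-B_\Delta^{-1}A_\Delta$ is a strictly lower-triangular block-Toeplitz matrix whose $\ell$th subdiagonal block is $\Psi^{\ell-1}(\Psi-\Phi^k)$. Exactly as in the proof of Theorem~\ref{th:suff}, Assumption~2 forces every eigenvalue of $\Psi$ into the open unit disk, so $I-e^{\mathrm{i}x}\Psi$ is invertible for all $x$; summing the geometric series, the generating function is
\begin{align*}
e^{\mathrm{i}x}(I - e^{\mathrm{i}N_c x}\Psi^{N_c})(I - e^{\mathrm{i}x}\Psi)^{-1}(\Psi-\Phi^k).
\end{align*}
The salient difference from the residual case is that $(\Psi-\Phi^k)$ now sits on the right, which is precisely the arrangement the ITAP is built for.

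I would then apply Theorem~\ref{th:toeplitz2}, which bounds $\|I-B_\Delta^{-1}A_\Delta\|$ above by $\max_{x\in[0,2\pi]}\sigma_{max}$ of that generating function. Fixing $x$ and a vector $\mathbf{v}$: the modulus-one scalar $e^{\mathrm{i}x}$ drops out of the singular value, and the triangle inequality applied to $I-e^{\mathrm{i}N_c x}\Psi^{N_c}$ splits the Rayleigh quotient into $\|(I-e^{\mathrm{i}x}\Psi)^{-1}(\Psi-\Phi^k)\mathbf{v}\|/\|\mathbf{v}\|$ plus $\|\Psi^{N_c}(I-e^{\mathrm{i}x}\Psi)^{-1}(\Psi-\Phi^k)\mathbf{v}\|/\|\mathbf{v}\|$. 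Since $\Psi^{N_c}$ is a polynomial in $\Psi$ it commutes with $(I-e^{\mathrm{i}x}\Psi)^{-1}$, so by submultiplicativity the second term is at most $\|\Psi^{N_c}\|$ times the first. The F-ITAP~\eqref{eq:itap_f} bounds the first term by $\tilde{\varphi}_F$ for every $x$, which gives $\|I-B_\Delta^{-1}A_\Delta\|\leq\tilde{\varphi}_F(1+\|\Psi^{N_c}\|)$.

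For the FCF bound I would repeat the argument for the operator $(I-B_\Delta^{-1}A_\Delta)A_{cf}A_{ff}^{-1}A_{fc}$. By \eqref{eq:relax_block} this is obtained from $I-B_\Delta^{-1}A_\Delta$ by shifting columns left and right-multiplying by $\Phi^k$, so it is again block-Toeplitz, with generating function (up to a modulus-one scalar) $(I - e^{\mathrm{i}N_c x}\Psi^{N_c})(I - e^{\mathrm{i}x}\Psi)^{-1}(\Psi-\Phi^k)\Phi^k$. Applying Theorem~\ref{th:toeplitz2} and then substituting $\mathbf{w}=\Phi^k\mathbf{v}$ in the Rayleigh quotient turns the denominator $\|\mathbf{v}\|$ into $\|\Phi^{-k}\mathbf{w}\|$; the same triangle inequality, commute-$\Psi^{N_c}$, and factor-out-$\|\Psi^{N_c}\|$ steps reduce the task to bounding $\|(I-e^{\mathrm{i}x}\Psi)^{-1}(\Psi-\Phi^k)\mathbf{w}\|$, which the FCF-ITAP~\eqref{eq:itap_fcf} controls by $\tilde{\varphi}_{FCF}\|\Phi^{-k}\mathbf{w}\|$. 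This yields $\|(I-B_\Delta^{-1}A_\Delta)A_{cf}A_{ff}^{-1}A_{fc}\|\leq\tilde{\varphi}_{FCF}(1+\|\Psi^{N_c}\|)$.

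The calculations are routine; the only genuine care is the bookkeeping of non-commutativity. Because $\Psi$ and $\Phi^k$ need not commute, $\Psi^{N_c}$ cannot be pushed through the factor $(\Psi-\Phi^k)$, so it must be peeled off as an operator-norm factor rather than absorbed into an ITAP argument — this is the one structural change from, and is in fact slightly cleaner than, the residual-propagation proof of Theorem~\ref{th:suff}, where the TAP had to be invoked twice. A secondary point is verifying that, after the column shift, $(I-B_\Delta^{-1}A_\Delta)A_{cf}A_{ff}^{-1}A_{fc}$ is \emph{exactly} (not merely asymptotically) block-Toeplitz — including at its boundary rows and columns — so that Theorem~\ref{th:toeplitz2} applies verbatim.
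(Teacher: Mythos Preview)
Your proposal is correct and follows essentially the same route as the paper's own proof: identify the block-Toeplitz generating function $e^{\mathrm{i}x}(I-e^{\mathrm{i}N_cx}\Psi^{N_c})(I-e^{\mathrm{i}x}\Psi)^{-1}(\Psi-\Phi^k)$ from \eqref{eq:cgc_err}, apply Theorem~\ref{th:toeplitz2}, factor out $(1+\|\Psi^{N_c}\|)$, and invoke the ITAP; the FCF case is handled identically via the substitution $\mathbf{w}=\Phi^k\mathbf{v}$. The only superfluous step is your remark about commuting $\Psi^{N_c}$ through $(I-e^{\mathrm{i}x}\Psi)^{-1}$ --- submultiplicativity $\|\Psi^{N_c}\mathbf{u}\|\leq\|\Psi^{N_c}\|\,\|\mathbf{u}\|$ already gives the bound without any commutation --- but this does not affect correctness.
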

\begin{proof}
The proof is analogous to that of Theorem \ref{th:suff}. 
Notice from \eqref{eq:cgc_err} that $I - B_\Delta^{-1}A_\Delta $ is a block-Toeplitz matrix with generating coefficients 
$\alpha_i = \Psi^{-(1+i)}(\Psi-\Phi^k)$ for $i=-1,...,-N_c$ and $\alpha_i = \mathbf{0}$ for $i\geq 0$. Following Theorem \ref{th:suff},
the generating function is given by
\begin{align*}
{F}_F(x) & = e^{\mathrm{i}x}(I - e^{\mathrm{i}Nx}\Psi^N) (I - e^{\mathrm{i}x}\Psi)^{-1} (\Psi-\Phi^k).
\end{align*}
Recall, under the assumption of an F-ITAP, $\tilde{\varphi}_{F} \|\mathbf{v}\| \leq \left[\max_{x\in[0,2\pi]} \left\| (I - e^{\mathrm{i}x}\Psi )^{-1}(\Psi - \Phi^k)\mathbf{v}\right\| \right]$ for all $\mathbf{v}$, with equality for some $\mathbf{v}$. Theorem \ref{th:toeplitz2} then yields
\begin{align*}
\|I - B_\Delta^{-1}A_\Delta\| & \leq \max_{x\in[0,2\pi]} \sigma_{max}({F}_F(x)) \\
& = \max_{\substack{x\in[0,2\pi], \\ \mathbf{v}\neq \mathbf{0}}} \frac{\left\|
	(I - e^{\mathrm{i}Nx}\Psi^N)(I - e^{\mathrm{i}x}\Psi)^{-1}(\Psi-\Phi^k)\mathbf{v}\right\|}{\|\mathbf{v}\|} \\
& \leq \max_{\substack{x\in[0,2\pi], \\ \mathbf{v}\neq \mathbf{0}}} \frac{\left(1 + \|\Psi^{N_c}\|\right)
	\left\|(I - e^{\mathrm{i}x}\Psi)^{-1}(\Psi-\Phi^k)\mathbf{v}\right\|}{\|\mathbf{v}\|} \\
& = \tilde{\varphi}_{F}\left(1 + \|\Psi^{N_c}\|\right).
\end{align*}

A similar proof follows for the case of FCF-relaxation, where the generator function, ${F}_{FCF}$,
has coefficients $\alpha_i = \Psi^{-(1+i)}(\Psi-\Phi^k)\Phi^{k}$ for $i=-1,...,-N_c$ and $\alpha_i = \mathbf{0}$ for
$i\geq 0$. By assumption of an FCF-ITAP with constant $\tilde{\varphi}_{FCF}$, the result follows.
\end{proof}

Now, we present a similar result to Theorem \ref{th:gen}, which provides necessary conditions for convergence
of error in the $\ell^2$-norm. 

\begin{theorem}[Necessary conditions ($\ell^2$-error)]\label{th:gen2}
Let $\Phi$ denote the fine-grid time-stepping operator and $\Psi$ denote the coarse-grid time-stepping operator,
with coarsening factor $k$, and $N_c$ coarse-grid time points. Assume that $(\Psi-\Phi^k)$ is invertible and that
$\Phi$ satisfies an F-ITAP with respect to $\Psi$, with constant $\tilde{\varphi}_{F}$. Then,
\begin{align*}
\left\|I - B_\Delta^{-1}A_\Delta\right\| & \geq \frac{\tilde{\varphi}_{F}}{1+O(1/\sqrt{N_c})}.
\end{align*}
Similarly, assume that $(\Psi-\Phi^k)$ is invertible and that $\Phi$ satisfies an FCF-ITAP with respect to
$\Psi$, with constant $\tilde{\varphi}_{FCF}$. Then
\begin{align*}
\left\|(I - B_\Delta^{-1}A_\Delta)A_{cf}A_{ff}^{-1}A_{fc}\right\| \geq \frac{\tilde{\varphi}_{FCF}}{1+O(1/\sqrt{N_c})}.
\end{align*}
\end{theorem}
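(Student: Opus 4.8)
The plan is to follow the proof of Theorem~\ref{th:gen} essentially line for line, replacing the residual quantity $I - A_\Delta B_\Delta^{-1}$ by the error quantity $I - B_\Delta^{-1}A_\Delta$ of \eqref{eq:cgc_err}. The only structural change is that now the block-diagonal factor $(\Psi-\Phi^k)$ sits on the \emph{right} rather than the left; concretely, $I - B_\Delta^{-1}A_\Delta$ has the form of $A_0$ in \eqref{eq:A0} with $f=\Psi$, $g=I$, $h=\Psi-\Phi^k$, and, using the observation after \eqref{eq:relax_block} that right-multiplication by $A_{cf}A_{ff}^{-1}A_{fc}$ shifts columns left and right-scales by $\Phi^k$, the operator $(I - B_\Delta^{-1}A_\Delta)A_{cf}A_{ff}^{-1}A_{fc}$ has the form of $A_1$ in \eqref{eq:A1} with $f=\Psi$, $g=I$, $h=(\Psi-\Phi^k)\Phi^k$. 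Because the ITAP is not considered with powers, only the case $p=1$ is needed.

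First I would invoke Lemma~\ref{lem:pinv2} with $p=1$: since $\Psi-\Phi^k$ (Assumption~4) and $\Phi$ are invertible, the pseudoinverse of each of these operators is a finite Toeplitz matrix of the form \eqref{eq:T} with $a=h^{-1}fg^{-1}$, $b=h^{-1}g^{-1}$, i.e.\ $a=(\Psi-\Phi^k)^{-1}\Psi$, $b=(\Psi-\Phi^k)^{-1}$ for F-relaxation and $a=\Phi^{-k}(\Psi-\Phi^k)^{-1}\Psi$, $b=\Phi^{-k}(\Psi-\Phi^k)^{-1}$ for FCF-relaxation. The $\ell^2$-norm of an operator equals the reciprocal of the smallest nonzero singular value of its (pseudo)inverse, so Lemma~\ref{lem:sig} bounds that singular value above by $\sigma_{\min}(\widehat{\mathcal{T}}_1)$, and Lemma~\ref{lem:TsT} together with Theorem~\ref{th:toeplitz} (applied to $\widehat{\mathcal{T}}_1\widehat{\mathcal{T}}_1^*$, whose generating function is $(-a+be^{\mathrm{i}x})(-a+be^{\mathrm{i}x})^{*}$) gives $\sigma_{\min}(\widehat{\mathcal{T}}_1)^2 = \min_{x\in[0,2\pi]}\sigma_{\min}(-a+be^{\mathrm{i}x})^{2} + O(1/N_c)$. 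Exactly as in \eqref{eq:lowbound} this yields
\begin{align*}
\| I - B_\Delta^{-1}A_\Delta\| \;\geq\; \frac{1}{\sigma_{\min}(\widehat{\mathcal{T}}_1)} \;\geq\; \frac{1}{\min_{x\in[0,2\pi]}\sigma_{\min}(-a+be^{\mathrm{i}x}) + O(1/\sqrt{N_c})},
\end{align*}
and similarly for the FCF operator with the corresponding $a,b$.

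It then remains to identify the right-hand side with the ITAP constant. For F-relaxation, factor $-a+be^{\mathrm{i}x} = (\Psi-\Phi^k)^{-1}(e^{\mathrm{i}x}I-\Psi) = e^{\mathrm{i}x}(\Psi-\Phi^k)^{-1}(I - e^{-\mathrm{i}x}\Psi)$; note $I-e^{\mathrm{i}x}\Psi$ is invertible for every $x$ because Assumption~2 forces every eigenvalue of $\Psi$ to have modulus less than one (as in the proof of Theorem~\ref{th:suff}). Hence $\mathbf{w}=(\Psi-\Phi^k)^{-1}(I-e^{-\mathrm{i}x}\Psi)\mathbf{v}$ is a bijection, and
\begin{align*}
\frac{1}{\min_{x\in[0,2\pi]}\sigma_{\min}(-a+be^{\mathrm{i}x})} = \max_{\substack{x\in[0,2\pi],\,\mathbf{v}\neq\mathbf{0}}} \frac{\|(I-e^{\mathrm{i}x}\Psi)^{-1}(\Psi-\Phi^k)\mathbf{v}\|}{\|\mathbf{v}\|} = \tilde{\varphi}_F,
\end{align*}
where the last equality uses that $\tilde{\varphi}_F$ is the (tight) F-ITAP constant. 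For FCF-relaxation the identical step applied to $-a+be^{\mathrm{i}x}=e^{\mathrm{i}x}\Phi^{-k}(\Psi-\Phi^k)^{-1}(I-e^{-\mathrm{i}x}\Psi)$, followed by the further bijection $\mathbf{u}=\Phi^{k}\mathbf{w}$, turns the denominator into $\|\Phi^{-k}\mathbf{u}\|$ and identifies the maximum with $\tilde{\varphi}_{FCF}$. Substituting back into the displayed lower bounds gives $\|I - B_\Delta^{-1}A_\Delta\| \geq \tilde{\varphi}_F/(1+O(1/\sqrt{N_c}))$ and $\|(I - B_\Delta^{-1}A_\Delta)A_{cf}A_{ff}^{-1}A_{fc}\| \geq \tilde{\varphi}_{FCF}/(1+O(1/\sqrt{N_c}))$; coupling this with Theorem~\ref{th:suff2} and the error-propagation forms of Section~\ref{sec:conv:prop} then completes the proof of Theorem~\ref{th:genF2}.

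The work here is mostly bookkeeping rather than conceptual: one must track how the $O(1/N_c)$ error of Theorem~\ref{th:toeplitz} propagates through a square root and a reciprocal into a uniform $O(1/\sqrt{N_c})$ factor (precisely as in \eqref{eq:tap1}), and verify that each change of variables is genuinely invertible — i.e.\ that $\Psi-\Phi^k$, $\Phi^k$, and $I-e^{\mathrm{i}x}\Psi$ are invertible, the last uniformly in $x$. The one place the argument differs in spirit from Theorem~\ref{th:gen} is that the factor $(\Psi-\Phi^k)^{-1}$ now lands on the \emph{left} of $e^{\mathrm{i}x}I-\Psi$, which is exactly why the ITAP (with the inverse on the outside) appears here in place of the TAP; the non-commuting block setting causes no trouble, since Lemmas~\ref{lem:binom}--\ref{lem:TsT} are already stated at that level of generality.
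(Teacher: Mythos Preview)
Your proposal is correct and follows essentially the same approach as the paper: the paper's proof simply says ``analogous to Theorem~\ref{th:gen}, this time with $g=I$, $f=\Psi$, and $h=\Psi-\Phi^k$ (F) or $h=(\Psi-\Phi^k)\Phi^k$ (FCF),'' writes down the resulting generating functions, and defers all algebra to the earlier argument. Your write-up is in fact more explicit than the paper's --- you spell out the bijections that identify $1/\min_x\sigma_{\min}(-a+be^{\mathrm{i}x})$ with the ITAP constant and note the invertibility of $I-e^{\mathrm{i}x}\Psi$ --- but the line of reasoning is identical.
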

\begin{proof}
This proof is analogous to that of Theorem \ref{th:gen}, this time with $g = I, f = \Psi$, and $h = \Psi - \Phi^k$ for
F-relaxation, and $h = (\Psi - \Phi^k)\Phi^k$ for FCF-relaxation. Similar to \eqref{eq:gen_f} and \eqref{eq:gen_fcf},
the Fourier generating functions of interest are then given by 
\begin{align*}
F_{F}(x) & = \Big(e^{\mathrm{i}x}(\Psi-\Phi^k)^{-1}\Psi - (\Psi-\Phi^k)^{-1}\Big)\Big(e^{\mathrm{i}x}(\Psi-\Phi^k)^{-1}\Psi - (\Psi-\Phi^k)^{-1}\Big)^{*}, \\
F_{FCF}(x) & = \Big(e^{\mathrm{i}x}\Psi(\Psi-\Phi^k)^{-1}\Phi^{-k} - \Phi^{-k}(\Psi-\Phi^k)\Big)
	 \Big(e^{\mathrm{i}x}\Psi(\Psi-\Phi^k)^{-1}\Phi^{-k} - \Phi^{-k}(\Psi-\Phi^k)^{-1}\Big)^{*}.
\end{align*}
Following the algebraic steps in the proof of Theorem \ref{th:gen} completes the proof.
\end{proof}

}
\section{The diagonalizable case}\label{sec:diag}

So far results have been derived in terms of the time-stepping operators $\Phi$ and $\Psi$. In this section, we assume that
$\Phi$ and $\Psi$ commute and are diagonalizable. In general this corresponds to the spatial operator being diagonalizable,
which holds for many parabolic-type problems, among others. The purposes of this section are:
\begin{enumerate}
\item Derive exact bounds on convergence for diagonalizable operators.
\item Show that theory developed in this paper is, in some sense, a strengthening and generalization of previous results in
\cite{DobrevKolevPeterssonSchroder2017}.
\end{enumerate}

If $\Phi$ and $\Psi$ commute and are diagonalizable, this means that they are diagonalizable with the same eigenvectors,
or simultaneously diagonalizable. Under the assumption of simultaneous diagonalizability, certain bounds can be derived
on functions of $\Phi$ and $\Psi$. Let $\Phi = U\Lambda U^{-1}$, where $\Lambda_{ii} = \lambda_i$, for $i=1,...,n$, is a
diagonal matrix consisting of the eigenvalues of $\Phi$, and columns of $U$ are the corresponding eigenvectors. Similarly, let
$\Psi = U\Xi U^{-1}$, for diagonal matrix $\Xi$, where $\Xi_{ii} = \mu_i$ are the eigenvalues of $\Psi$. {Note, in this
section subscript $i$ corresponds to eigenvalue index, as opposed to iteration number as used previously.} Now let $\mathcal{A}$
be some matrix operator, where each entry is a rational function of $\Phi$ and $\Psi$,
\begin{align}
\mathcal{A}(\Phi,\Psi)&  = 
\begin{bmatrix}
a_{00}(\Phi,\Psi) & a_{01}(\Phi,\Psi) & ...\\
a_{10}(\Phi,\Psi) & a_{11}(\Phi,\Psi) & ... \\
\vdots & \vdots  & \ddots
\end{bmatrix}
=
\begin{bmatrix}
U \\ & U \\ & & \ddots
\end{bmatrix}
\begin{bmatrix}
a_{00}(\Lambda,\Xi) & a_{01}(\Lambda,\Xi) & ...\\
a_{10}(\Lambda,\Xi) & a_{11}(\Lambda,\Xi) & ... \\
\vdots & \vdots  & \ddots
\end{bmatrix}
\begin{bmatrix}
U^{-1} \\ & U^{-1} \\ & & \ddots
\end{bmatrix}.\label{eq:genA}
\end{align}
Denote $\mathbf{U}$ as the block diagonal matrix of eigenvectors, $U$, in \eqref{eq:genA}, define $\mathcal{P}$
as the orthogonal permutation matrix such that $\mathcal{P}\mathcal{A}(\Lambda,\Xi)\mathcal{P}^T$ is block diagonal, with blocks
given by $\mathcal{A}(\lambda_i,\mu_i)$, and let $\widetilde{U} = \mathbf{U}\mathcal{P}^T$. Then, 
\begin{align}\label{eq:u*u}
\|\mathcal{A}(\Phi,\Psi)\|_{(\widetilde{U}\widetilde{U}^*)^{-1}} & = \sup_{\mathbf{x} \neq \mathbf{0}}
	\frac{\left\| \widetilde{U}^{-1}\mathbf{U}\mathcal{P}^T\mathcal{P}\mathcal{A}(\Lambda,\Xi)\mathcal{P}^T\mathcal{P}
	\mathbf{U}^{-1}\mathbf{x}\right\|}{\|\widetilde{U}^{-1}\mathbf{x}\|^2} 
= \sup_{\mathbf{x} \neq \mathbf{0}} \frac{\left\| \mathcal{P}\mathcal{A}(\Lambda,\Xi)\mathcal{P}^T\mathbf{x}\right\|}{\|\mathbf{x}\|^2} 
= \sup_i \|\mathcal{A}(\lambda_i,\mu_i)\|.
\end{align}
Thus, the $(\widetilde{U}\widetilde{U}^*)^{-1}$-norm of $\mathcal{A}(\Phi,\Psi)$ can be computed by maximizing the
norm of $\mathcal{A}$ over eigenvalue indices of $\Phi$ and $\Psi$. In the case that $\Phi$ and $\Psi$ are a normal matrices, $U$ is unitary
and the $(\widetilde{U}\widetilde{U}^*)^{-1}$-norm reduces to the standard Euclidean 2-norm. More generally, we have
the relation 
\begin{align}\label{eq:norml2}
\frac{1}{\kappa(U)} \left(\sup_i \|\mathcal{A}(\lambda_i,\mu_i)^k\| \right) \leq \|\mathcal{A}(\Phi,\Psi)^k\| \leq
	\kappa(U) \left( \sup_i \|\mathcal{A}(\lambda_i,\mu_i)^k\| \right),
\end{align}
where $\kappa(U)$ denotes the matrix condition number of $U$.\footnote{A similar modified norm also occurs in
the case of integrating in time with a mass matrix \cite{DobrevKolevPeterssonSchroder2017}.}

Here, we are interested in $\mathcal{A}$ corresponding to the error- and residual-propagation operators of MGRiT, 
$\mathcal{R}$ and $\mathcal{E}$ (see \eqref{eq:r_pref}, \eqref{eq:e_pref}, \eqref{eq:r_fcf}, and \eqref{eq:e_fcf}).
For notation, let, for example, $\mathcal{R}_F(\lambda_i,\mu_i)$ denote residual
propagation for F-relaxation \eqref{eq:r_pref} operating on eigenvalues $\lambda_i$ and $\mu_i$ as opposed to
operators $\Phi$ and $\Psi$. Convergence of MGRiT requires $\|\mathcal{R}^p\|,\|\mathcal{E}^p\| \to 0$ with iteration
$p$; to that end, bounding $\sup_i \|\mathcal{R}(\lambda_i,\mu_i)^p\| < 1$ for all $i$ provides necessary and sufficient
conditions for $\|\mathcal{R}(\Phi,\Psi)^p\|\to 0$ with $p$, and similarly for $\mathcal{E}(\lambda_i,\mu_i)$.

\subsection{Necessary conditions}\label{sec:diag:nec}

First, let us extend the necessary conditions for $p$ iterations (Theorems \ref{th:necF}, \ref{th:necFCF}, and \ref{th:gen})
to the diagonalizable case. For notation, let, for example, $\left[I - A_\Delta B_\Delta^{-1}\right]_i$ denote the $N_x\times N_x$ matrix of
$I - A_\Delta B_\Delta^{-1}$ evaluated at the $i$th eigenmode of $\Phi$ and $\Psi$, where $\Phi$ and $\Psi$ are
$N_x\times N_x$. Then, by assumption of simultaneous diagonalizability {and the fact that
$I - A_\Delta B_\Delta^{-1} = I - B_\Delta^{-1}A_\Delta$ when $\Phi$ and $\Psi$ commute,}
\begin{align}\label{eq:i_norm}
\begin{split}
\|(I - B_\Delta^{-1} A_\Delta)^p\|_{(UU^*)^{-1}} & = \|(I - A_\Delta B_\Delta^{-1})^p\|_{(UU^*)^{-1}} \\
	& = \sup_i \left\|\left[I - A_\Delta B_\Delta^{-1}\right]_i^p\right\|, \\
\left\|\left((I -  B_\Delta^{-1}A_\Delta)A_{cf}A_{ff}^{-1}A_{fc}\right)^p\right\|_{(UU^*)^{-1}} & =
	\left\|\left((I - A_\Delta B_\Delta^{-1})A_{cf}A_{ff}^{-1}A_{fc}\right)^p\right\|_{(UU^*)^{-1}} \\ &=
	\sup_i \left\|\left[(I - A_\Delta B_\Delta^{-1})A_{cf}A_{ff}^{-1}A_{fc}\right]_i^p\right\|.
\end{split}
\end{align}
Now, we can follow the derivation in Section \ref{sec:gen}. For a pseduoinverse (Lemmas \ref{lem:pinv1} and \ref{lem:pinv2}),
we have $f = \mu_i$, $g = \mu_i - \lambda_i^k$, and, for F-relaxation, $h = 1$, while for FCF-relaxation, $h = \lambda_i^k$.
Because $f,g$, and $h$ now commute, we can remove a leading factor of $g^{-1}h^{-1}$, and we are interested in the
smallest nonzero singular value of (Lemma \ref{lem:pinv2})
\begin{align}\label{eq:pinv_mu}
(\mathcal{A}_0^p)^\dagger = \frac{1}{\mu_i(\mu_i-\lambda_i^k)} \begin{bmatrix} I \\ & \mathbf{0}_{p\times p}\end{bmatrix}
	\mathcal{T}_0^p \begin{bmatrix} \mathbf{0}_{p\times p} \\ & I \end{bmatrix}, \hspace{3ex} \textnormal{where}\hspace{3ex}
\mathcal{T}_0 = \begin{bmatrix} - \mu_i & 1 \\ & -\mu_i & \ddots \\ && \ddots & 1 \\ & & & -\mu_i\end{bmatrix}.
\end{align}
Following the further derivations in Section \ref{sec:gen}, the minimum nonzero singular value of $(\mathcal{A}_0^p)^\dagger$
is bounded above by the minimum eigenvalue of $\widehat{\mathcal{T}}_p\widehat{\mathcal{T}}_p^*$, where $a = \mu_i$
and $b = 1$ (see \eqref{eq:T}, \eqref{eq:Tphat}, and Lemmas \ref{lem:binom}, \ref{lem:TsT}, and \ref{lem:sig}).
The Fourier generating function for $\widehat{\mathcal{T}}_p\widehat{\mathcal{T}}_p^*$ from Lemma \ref{lem:TsT}
\eqref{eq:Fgen} now takes the form
\begin{align*}
F_{p}(x) & = \left(1+|\mu_i|^2-(\mu_ie^{-\mathrm{i}x}+\overline{\mu}_ie^{\mathrm{i}x})\right)^p \\
& = \Big(1+\Rea(\mu_i)^2 + \Ima(\mu_i)^2- 2\Rea(\mu_i)\cos(x) - 2\Ima(\mu_i)\sin(x)\Big)^p.
\end{align*}
To obtain the minimum of $F_p(x)$, note that
\begin{align}
F_{p}'(x) & = p\Big(2\Rea(\mu_i)\sin(x)-2\Ima(\mu_i)\cos(x)\Big)\Big(1+|\mu_i|^2-(\mu_ie^{-\mathrm{i}x}+\overline{\mu}_ie^{\mathrm{i}x})\Big)^{p-1} \nonumber\\
& = p\Big(2\Rea(\mu_i)\sin(x)-2\Ima(\mu_i)\cos(x)\Big)\Big( (\sin(x)-\Ima(\mu_i))^2 + (\cos(x)-\Rea(\mu_i))^2\Big).\label{eq:roots}
\end{align}
The first term in \eqref{eq:roots} has real roots given by $n\pi + \arctan(\Ima(\mu_i)/\Rea(\mu_i))$, for $n\in\mathbb{Z}$.
Any other roots of $F_{p}'(x)$ must satisfy
\begin{align*}
 (\sin(x)-\Ima(\mu_i))^2 + (\cos(x)-\Rea(\mu_i))^2 = 0,
\end{align*}
or, equivalently, $\sin(x)=\Ima(\mu_i)$ and $\cos(x)=\Rea(\mu_i)$. Suppose this holds. Solving for $\tilde{x} = \arcsin(\Ima(\mu_i))$,
we must also have $\cos(\tilde{x}) -\Rea(\mu_i) = 0$, which implies $\Rea(\mu_i) = \sqrt{1-\Ima(\mu_i)^2}$. However, then
$|\mu_i| = 1$, which violates the assumption that $|\mu_i| < 1$. It follows that the only real roots of $F'_{p}(x)$ are given by
$\hat{x} := n\pi + \arctan(\Ima(\mu_i)/\Rea(\mu_i))$, for $n\in\mathbb{Z}$.

Given $F_p(x)$ is continuous and $2\pi$-periodic, the infimum of $F_p(x)$ over $x\in\mathbb{R}$ is attained at one of
the roots of $F'_p(x)$. It is easily confirmed that the infimum is achieved for even $n$, $\hat{x}_{\min} := 2\hat{n}\pi +
\arctan(\Ima(\mu_i)/\Rea(\mu_i))$, where $\hat{n}\in\mathbb{Z}$, which yields
\begin{align*}
\inf_{x\in\mathbb{R}} F_{p}(x) & = \Big(1+\Rea(\mu_i)^2 + \Ima(\mu_i)^2- 2\Rea(\mu_i)\cos(\hat{x}) - 2\Ima(\mu_i)\sin(\hat{x})\Big)^p \\
& = \left( 1+\Rea(\mu_i)^2 + \Ima(\mu_i)^2 - 2\frac{\Rea(\mu_i)}{\sqrt{1+\frac{\Ima(\mu_i)}{\Rea(\mu_i)}}} 
	- 2\frac{\Ima(\mu_i)^2/\Rea(\mu_i)}{\sqrt{1+\frac{\Ima(\mu_i)}{\Rea(\mu_i)}}} \right)^p \\
& = \left( 1+\Rea(\mu_i)^2 + \Ima(\mu_i)^2 - 2\frac{\Ima(\mu_i)^2 + \Rea(\mu_i)^2}{\sqrt{\Ima(\mu_i)^2 + \Rea(\mu_i)^2}} \right)^p \\
& = \Big( 1 + |\mu_i|^2 - 2|\mu_i|\Big)^p \\
& = (1 - |\mu_i|)^{2p}.
\end{align*}
Noting that $F_p(x) - (1 - |\mu_i|)^{2p}$ has a zero of order two at $\hat{x}_{\min}$, it follows from Theorem \ref{th:toeplitz}
that $\lambda_{\min}(\widehat{\mathcal{T}}_p\widehat{\mathcal{T}}_p^*) = (1 - |\mu_i|)^{2p} + O(1/N_c^2)$. Note the faster
convergence in $N_c$ of eigenvalues to the infimum over $F(x)$ in the diagonalizable case, $O(1/N_c^2)$, compared
with the general case in Section \ref{sec:gen}, where the first-order root in $x$ led to convergence $O(1/N_c)$. 

This discussion is summarized in the following theorem on necessary conditions for convergence.

\begin{theorem}[Necessary conditions -- the diagonalizable case]\label{th:nec_diag}
Let $\Phi$ denote the fine-grid time-stepping operator and $\Psi$ denote the coarse-grid time-stepping operator,
with coarsening factor $k$, and $N_c$ coarse-grid time points. Assume that $\Phi$ and $\Psi$ commute and are
diagonalizable, with eigenvectors as columns of $U$. Then, for number of iterations $p\geq 1$
\begin{align*}
\|(I - A_\Delta B_\Delta^{-1})^p\|_{(UU^*)^{-1}} & \geq \sup_i \frac{|\mu_i - \lambda_i^k|^p}{\sqrt{(1 - |\mu_i|)^{2p} + O(1/N_c^2)}}
	> \sup_i \frac{|\mu_i - \lambda_i^k|^p}{(1 - |\mu_i|)^{p} + O(1/N_c)}, \\
\left\|\left((I - A_\Delta B_\Delta^{-1})A_{cf}A_{ff}^{-1}A_{fc}\right)^p\right\|_{(UU^*)^{-1}} & \geq \sup_i
	\frac{|\lambda_i^{kp}||\mu_i - \lambda_i^k|^p}{\sqrt{(1 - |\mu_i|)^{2p} + O(1/N_c^2)}} 
	> \sup_i \frac{|\lambda_i^{kp}||\mu_i - \lambda_i^k|^p}{(1 - |\mu_i|)^{p} + O(1/N_c)}.
\end{align*}
\end{theorem}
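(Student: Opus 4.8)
The plan is to collapse the $(UU^*)^{-1}$-norm into a scalar, per-eigenmode Toeplitz estimate and then feed that through the machinery of Section~\ref{sec:gen}. First I would invoke simultaneous diagonalizability: since $\Phi$ and $\Psi$ commute and are diagonalizable they share an eigenbasis $U$, so by \eqref{eq:u*u} and \eqref{eq:i_norm}, and using $I - A_\Delta B_\Delta^{-1} = I - B_\Delta^{-1}A_\Delta$ in the commuting case,
\[
\|(I - A_\Delta B_\Delta^{-1})^p\|_{(UU^*)^{-1}} = \sup_i \big\|[I - A_\Delta B_\Delta^{-1}]_i^p\big\|,
\]
and likewise for $((I - A_\Delta B_\Delta^{-1})A_{cf}A_{ff}^{-1}A_{fc})^p$, where $[\,\cdot\,]_i$ denotes the scalar block-Toeplitz matrix obtained by substituting the eigenvalue pair $(\lambda_i,\mu_i)$ for $(\Phi,\Psi)$. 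Hence it suffices to lower-bound $\big\|[\,\cdot\,]_i^p\big\|$ for each fixed $i$.

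For fixed $i$, I would write $\big\|[\,\cdot\,]_i^p\big\| = 1/\sigma_{\min}$ of the associated pseudoinverse, with $\sigma_{\min}$ the smallest nonzero singular value. By Lemma~\ref{lem:pinv2}, with $f = \mu_i$, $g = \mu_i - \lambda_i^k$, and $h = 1$ for F-relaxation (resp.\ $h = \lambda_i^k$ for FCF-relaxation), this pseudoinverse is of the form \eqref{eq:pinv_mu}: a scalar prefactor — which contributes the factor $|\mu_i - \lambda_i^k|^p$ (resp.\ $|\lambda_i^k|^p\,|\mu_i - \lambda_i^k|^p$) to the numerator of the eventual bound — times the matrix obtained from $\mathcal{T}_0^p$ by zeroing its first $p$ columns and last $p$ rows, where $\mathcal{T}_0$ is the bidiagonal Toeplitz matrix with $-\mu_i$ on the diagonal and $1$ on the first superdiagonal. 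Lemma~\ref{lem:sig} then bounds the smallest nonzero singular value of that zeroed matrix above by $\sigma_{\min}(\widehat{\mathcal{T}}_p)$, where $\widehat{\mathcal{T}}_p$ is $\mathcal{T}_0^p$ with only its last $p$ rows removed, so the task reduces to estimating $\sigma_{\min}(\widehat{\mathcal{T}}_p)^2 = \lambda_{\min}(\widehat{\mathcal{T}}_p\widehat{\mathcal{T}}_p^*)$.

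Next I would apply Lemmas~\ref{lem:binom} and~\ref{lem:TsT} with $a = \mu_i$, $b = 1$, which give the (scalar, nonnegative) Fourier symbol of $\widehat{\mathcal{T}}_p\widehat{\mathcal{T}}_p^*$ as
\[
F_p(x) = \big(1 + |\mu_i|^2 - \mu_i e^{-\mathrm{i}x} - \overline{\mu}_i e^{\mathrm{i}x}\big)^p = |e^{\mathrm{i}x} - \mu_i|^{2p},
\]
and then run the elementary calculus on $F_p'(x)$ exactly as in the discussion above: since $|\mu_i| < 1$ by Assumption~2, the would-be extra critical point is excluded, the infimum is attained at $\hat{x} = \arctan(\Ima(\mu_i)/\Rea(\mu_i))$ with $\inf_x F_p(x) = (1 - |\mu_i|)^{2p}$, and the zero of $F_p(x) - (1 - |\mu_i|)^{2p}$ there has order two. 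Theorem~\ref{th:toeplitz} then yields $\lambda_{\min}(\widehat{\mathcal{T}}_p\widehat{\mathcal{T}}_p^*) = (1 - |\mu_i|)^{2p} + O(1/N_c^2)$. Reinstating the scalar prefactor and taking reciprocals gives, for each $i$,
\[
\big\|[I - A_\Delta B_\Delta^{-1}]_i^p\big\| \geq \frac{|\mu_i - \lambda_i^k|^p}{\sqrt{(1 - |\mu_i|)^{2p} + O(1/N_c^2)}},
\]
with the additional numerator factor $|\lambda_i^{kp}|$ in the FCF case; taking the supremum over the finitely many $i$ and invoking the norm identity above gives the first inequality in each line of the theorem, while the second, looser inequality follows from $\sqrt{a^2 + \varepsilon^2} \leq a + \varepsilon$ for $a,\varepsilon \geq 0$.

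The routine parts — diagonalization, the explicit pseudoinverse, and the reduction to $\widehat{\mathcal{T}}_p$ — are immediate from the lemmas of Section~\ref{sec:gen}. The delicate point, and the reason the diagonalizable case sharpens the general $O(1/\sqrt{N_c})$ correction to $O(1/N_c^2)$, is confirming that $F_p(x) - (1 - |\mu_i|)^{2p}$ has a zero of order \emph{exactly} two at $\hat{x}$ — equivalently $F_p''(\hat{x}) \neq 0$ — since it is this order that Theorem~\ref{th:toeplitz} converts into the $O(1/N_c^2)$ term; as with the exclusion of the spurious critical point, this rests on the strict inequality $|\mu_i| < 1$.
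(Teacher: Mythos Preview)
Your proposal is correct and follows essentially the same route as the paper: reduce to per-eigenmode scalar Toeplitz blocks via \eqref{eq:i_norm}, apply Lemmas~\ref{lem:pinv2} and~\ref{lem:sig} with $f=\mu_i$, $g=\mu_i-\lambda_i^k$, $h\in\{1,\lambda_i^k\}$, compute the symbol $F_p(x)=|e^{\mathrm{i}x}-\mu_i|^{2p}$ via Lemma~\ref{lem:TsT}, minimize it by the calculus in Section~\ref{sec:diag:nec} to get $(1-|\mu_i|)^{2p}$ with a second-order zero, and invoke Theorem~\ref{th:toeplitz} for the $O(1/N_c^2)$ correction. Your closing remark that the order-two zero (hence $F_p''(\hat{x})\neq 0$, which needs $|\mu_i|<1$) is what upgrades the general $O(1/\sqrt{N_c})$ to $O(1/N_c^2)$ is exactly the paper's point as well.
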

\begin{proof}
The proof follows from \eqref{eq:i_norm}, Theorem \ref{th:toeplitz}, the minimum derived for $F_p(x)$,
and the fact that $x+y > \sqrt{x^2+y^2}$ for $x,y >0$.
\end{proof}

\subsection{Sufficient conditions}\label{sec:diag:suff}

Now consider sufficient conditions for convergence under the assumption that $\Phi$ and $\Psi$ commute
and are diagonalizable. To do so, we consider the minimum nonzero singular value of $(I - A_\Delta B_\Delta^{-1})^\dagger$.
As in Section \ref{sec:diag:nec} and \eqref{eq:pinv_mu}, we can pull out leading constants, form the normal 
equations with the remaining block, and reduce the problem to finding the minimum nonzero singular value of
the following symmetric positive semi-definite matrix
\begin{align}\label{eq:almost_toe}
\begin{bmatrix}
{0} & {0} & {0} & ... \\
{0} & 1 + |\mu_i|^2 & -\overline{\mu}_i \\
{0} & -\mu_i & \ddots & \ddots \\
\vdots && \ddots  & 1 + |\mu_i|^2  & -\overline{\mu}_i \\
&& & -\mu_i & 1  
\end{bmatrix}.
\end{align}
The nonzero block is a single-entry perturbation to a symmetric tridiagonal Toeplitz matrix, for which we can place tight
bounds on the minimum nonzero eigenvalue (see Appendix, Lemma \ref{lem:tridiag} and \eqref{eq:s_bounds}). Using the bounds derived
in \eqref{eq:s_bounds} leads to the following theorem.

\begin{theorem}[Tight bounds -- the diagonalizable case]\label{th:diag_tight}
Let $\Phi$ denote the fine-grid time-stepping operator and $\Psi$ denote the coarse-grid time-stepping operator,
with coarsening factor $k$, and $N_c$ coarse-grid time points. Assume that $\Phi$ and $\Psi$ commute and are
diagonalizable, with eigenvectors as columns of $U$. Then,
\begin{align}\label{eq:v_bounds}
\begin{split}
\sup_i \frac{|\mu_i - \lambda_i^k|}{\sqrt{ (1 - |\mu_i|)^2 + \frac{\pi^2|\mu_i|}{N_c^2}}}
	& \leq \|I - A_\Delta B_\Delta^{-1}\|_{(UU^*)^{-1}}
	\leq \sup_i  \frac{|\mu_i - \lambda_i^k|}{\sqrt{(1 - |\mu_i|)^2 + \frac{\pi^2|\mu_i|}{6N_c^2}} }, \\
\sup_i \frac{|\lambda_i^k||\mu_i - \lambda_i^k|}{\sqrt{ (1 - |\mu_i|)^2 + \frac{\pi^2|\mu_i|}{N_c^2}}}
	& \leq \left\|(I - A_\Delta B_\Delta^{-1})A_{cf}A_{ff}^{-1}A_{fc}\right\|_{(UU^*)^{-1}}
	\leq \sup_i  \frac{|\lambda_i^k||\mu_i - \lambda_i^k|}{\sqrt{(1 - |\mu_i|)^2 + \frac{\pi^2|\mu_i|}{6N_c^2}} }.
\end{split}
\end{align}
Furthermore, for $p\geq 1$
\begin{align}\label{eq:p_bounds}
\begin{split}
\left\|(I - A_\Delta B_\Delta^{-1})^p\right\|_{(UU^*)^{-1}} & = \sup_i  \frac{|\mu_i - \lambda_i^k|^p}{\sqrt{(1 - |\mu_i|)^{2p} + O(1/N_c^2)} } , \\
\left\|\left((I - A_\Delta B_\Delta^{-1})A_{cf}A_{ff}^{-1}A_{fc}\right)^p\right\|_{(UU^*)^{-1}} & =
	\sup_i  \frac{|\lambda_i^k|^p|\mu_i - \lambda_i^k|^p}{\sqrt{(1 - |\mu_i|)^{2p} + O(1/N_c^2)} }.
\end{split}
\end{align}
\end{theorem}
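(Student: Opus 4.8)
The plan is to reduce everything to a supremum over eigenmodes and then to a one-variable spectral problem for a (perturbed) tridiagonal Toeplitz matrix. First I would invoke the $(UU^*)^{-1}$-norm identity \eqref{eq:u*u}--\eqref{eq:i_norm}: since $\Phi$ and $\Psi$ are simultaneously diagonalizable, each quantity in \eqref{eq:v_bounds}--\eqref{eq:p_bounds} equals $\sup_i$ of the $\ell^2$-norm of the $N_c\times N_c$ matrix obtained by replacing $(\Phi,\Psi)$ by the eigenvalue pair $(\lambda_i,\mu_i)$ (recall $I-A_\Delta B_\Delta^{-1}=I-B_\Delta^{-1}A_\Delta$ here). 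For F-relaxation this matrix is $(\mu_i-\lambda_i^k)$ times a strictly lower-triangular Toeplitz matrix with subdiagonals $1,\mu_i,\mu_i^2,\dots$, i.e.\ it has the form $A_0$ in \eqref{eq:A0} with $f=\mu_i$, $g=\mu_i-\lambda_i^k$, $h=1$; for FCF-relaxation it has the form $A_1$ in \eqref{eq:A1} with the same $f,g$ but $h=\lambda_i^k$, the shift-by-one and extra zero row coming from $A_{cf}A_{ff}^{-1}A_{fc}$ (cf.\ \eqref{eq:relax_block}, \eqref{eq:cgc_res}). Using $\|M\|=1/\sigma_{\min}(M^\dagger)$ together with Lemmas \ref{lem:pinv1} and \ref{lem:pinv2} for the pseudoinverse of $A_0^p$ and $A_1^p$, and pulling out the scalar prefactor $(\mu_i-\lambda_i^k)^{p}$, respectively $\bigl(\lambda_i^k(\mu_i-\lambda_i^k)\bigr)^{p}$, the problem reduces to estimating the smallest nonzero singular value of the compressed Toeplitz power $\widehat{\mathcal T}_p$ of Lemma \ref{lem:sig} with $a=\mu_i$, $b=1$ (here Assumptions~2 and~4 guarantee $\mu_i\neq0$ and $\mu_i-\lambda_i^k\neq0$, so the prefactors and the relevant blocks are nonsingular).

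For $p=1$ (the two-sided bounds \eqref{eq:v_bounds}) the normal equations of that compressed operator are precisely the matrix \eqref{eq:almost_toe}: a single-entry perturbation of a symmetric tridiagonal Toeplitz matrix with diagonal $1+|\mu_i|^2$ and off-diagonals of modulus $|\mu_i|$, after a unitary diagonal similarity that makes $\mu_i$ real. I would then apply the Appendix estimate, Lemma \ref{lem:tridiag} and \eqref{eq:s_bounds}, which sandwiches the smallest eigenvalue of such a matrix between $(1-|\mu_i|)^2+\pi^2|\mu_i|/(6N_c^2)$ and $(1-|\mu_i|)^2+\pi^2|\mu_i|/N_c^2$. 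Taking square roots, reciprocals, reinstating the scalar prefactor, and taking $\sup_i$ produces \eqref{eq:v_bounds} for both F- and FCF-relaxation, the larger remainder yielding the lower bound on the norm and the smaller one the upper bound.

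For general $p\ge1$ (the equality \eqref{eq:p_bounds}) the lower bound is exactly Theorem \ref{th:nec_diag}: Lemma \ref{lem:sig} gives $\sigma_{\min}(\widehat{\mathcal A}_p)\le\sigma_{\min}(\widehat{\mathcal T}_p)$, while the computation of Section \ref{sec:diag:nec} shows that the generating function $F_p(x)=\bigl(1+|\mu_i|^2-2\,\Rea(\overline{\mu_i}\,e^{\mathrm{i}x})\bigr)^p$ from Lemmas \ref{lem:binom} and \ref{lem:TsT} attains its infimum $(1-|\mu_i|)^{2p}$ with a zero of order two, so Theorem \ref{th:toeplitz} gives $\lambda_{\min}(\widehat{\mathcal T}_p\widehat{\mathcal T}_p^*)=(1-|\mu_i|)^{2p}+O(1/N_c^2)$. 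For the matching upper bound I would establish the reverse inequality $\sigma_{\min}(\widehat{\mathcal A}_p)\ge\sigma_{\min}(\widehat{\mathcal T}_p)$: since $\widehat{\mathcal A}_p$ is $\widehat{\mathcal T}_p$ with its first $p$ columns set to zero, singular-value interlacing under column deletion gives $\sigma_{\min}(\widehat{\mathcal A}_p)=\sigma_{r-p}(\widehat{\mathcal A}_p)\ge\sigma_{r}(\widehat{\mathcal T}_p)=\sigma_{\min}(\widehat{\mathcal T}_p)$, where $r=\mathrm{rank}(\widehat{\mathcal T}_p)=N_c-p$. Hence $\sigma_{\min}(\widehat{\mathcal A}_p)=\sqrt{(1-|\mu_i|)^{2p}+O(1/N_c^2)}$, and reinstating the prefactors and taking $\sup_i$ yields \eqref{eq:p_bounds} for both relaxation schemes.

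The routine parts are the $(UU^*)^{-1}$-norm reduction and the prefactoring, both immediate from the earlier lemmas. The main obstacle is the $O(1/N_c^2)$ bookkeeping: one must verify that $F_p(x)-\inf_x F_p$ has a zero of order exactly two (uniformly in the relevant range) so that Theorem \ref{th:toeplitz}'s remainder is genuinely $O(N_c^{-2})$ with a controlled positive leading constant; one must handle possible rank-deficiency in the interlacing step (so that the "smallest nonzero" singular values of $\widehat{\mathcal A}_p$ and $\widehat{\mathcal T}_p$ really coincide in the degenerate scalar case); and for $p=1$ the whole estimate hinges on the two constants $\pi^2/6$ and $\pi^2$ furnished by the Appendix, whose derivation via the eigenvalues of a rank-one-corner perturbation of a tridiagonal Toeplitz matrix is where the quantitative work actually lives.
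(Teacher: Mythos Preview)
Your treatment of the single-iteration bounds \eqref{eq:v_bounds} and of the lower bound for general $p$ matches the paper's proof: reduce to eigenmodes via \eqref{eq:i_norm}, invoke Lemma~\ref{lem:tridiag} on the tridiagonal normal equations \eqref{eq:almost_toe}, and cite Theorem~\ref{th:nec_diag} for the lower half of \eqref{eq:p_bounds}.

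The gap is in your upper bound for $p\geq 1$. The reverse inequality $\sigma_{\min}(\widehat{\mathcal A}_p)\geq\sigma_{\min}(\widehat{\mathcal T}_p)$ does \emph{not} follow from singular-value interlacing under column deletion; in fact interlacing yields the opposite direction, which is precisely Lemma~\ref{lem:sig}. Deleting $p$ columns from the $N_c\times N_c$ matrix $\widehat{\mathcal T}_p$ gives $\sigma_i(\widehat{\mathcal T}_p)\geq\sigma_i(B)\geq\sigma_{i+p}(\widehat{\mathcal T}_p)$; at the index $i=N_c-p$ this says only $\sigma_{\min,>0}(\widehat{\mathcal A}_p)\leq\sigma_{N_c-p}(\widehat{\mathcal T}_p)$, while the lower bound $\sigma_{N_c}(\widehat{\mathcal T}_p)=0$ is vacuous. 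Your index bookkeeping $\sigma_{\min}(\widehat{\mathcal A}_p)=\sigma_{r-p}(\widehat{\mathcal A}_p)$ is an error: $\widehat{\mathcal A}_p$ still has rank $N_c-p$, not $N_c-2p$, because zeroing the first $p$ columns of $\widehat{\mathcal T}_p$ does not drop the rank a second time (the remaining $(N_c-p)\times(N_c-p)$ block is lower triangular with unit diagonal).

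The paper bypasses this entirely. For the upper half of \eqref{eq:p_bounds} it simply applies sub-multiplicativity at each eigenmode, $\bigl\|[I-A_\Delta B_\Delta^{-1}]_i^p\bigr\|\leq\bigl\|[I-A_\Delta B_\Delta^{-1}]_i\bigr\|^p$, and then feeds in the $p=1$ upper bound from \eqref{eq:v_bounds}. Since $\bigl((1-|\mu_i|)^2+O(1/N_c^2)\bigr)^{p/2}=\sqrt{(1-|\mu_i|)^{2p}+O(1/N_c^2)}$, this matches the lower bound from Theorem~\ref{th:nec_diag} to the stated order and gives \eqref{eq:p_bounds}.
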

\begin{proof}
The single-iteration bounds follow immediately from \eqref{eq:i_norm} and Lemma \ref{lem:tridiag} \eqref{eq:s_bounds}. 
Applying the sub-multiplicative norm property to \eqref{eq:v_bounds} yields an upper bound on $p$ iterations, and Theorem
\ref{th:nec_diag} yields lower bounds, each of which take the form, for example, with F-relaxation,
\begin{align*}
\sup_i  \frac{|\mu_i - \lambda_i^k|^p}{\sqrt{(1 - |\mu_i|)^{2p} + O(1/N_c^2)} }.
\end{align*}
This completes the proof.
\end{proof}

\subsection{Relation to the TAP}\label{sec:diag:tight}

Returning to the general theoretical framework, suppose that $\Phi$ and $\Psi$ commute and are diagonalizable,
$\Phi = U\Lambda U^{-1}$ and $\Psi = U\Xi U^{-1}$. Further suppose that $\Phi$ satisfies an F-TAP$_p$ with respect to
$\Psi$, with constant $\varphi_{F,p}$, \textit{in the $(UU^*)^{-1}$-norm}. This is equivalent to saying that there
exists a constant $\varphi_{F,p}$ such that for all $\mathbf{v}$, 
\begin{align}
\|(\Psi - \Phi^k)^p\mathbf{v}\|_{(UU^*)^{-1}} & \leq \varphi_{F,p} \left[\min_{x\in[0,2\pi]} \| (I - e^{\mathrm{i}x}\Psi)^p\mathbf{v}\|_{(UU^*)^{-1}} \right], \nonumber\\
\Longleftrightarrow\hspace{6ex}
\|(\Xi - \Lambda^k)^pU^{-1}\mathbf{v}\| & \leq \varphi_{F,p} \left[\min_{x\in[0,2\pi]} \| (I - e^{\mathrm{i}x}\Xi)^pU^{-1}\mathbf{v}\| \right].\label{eq:almost}
\end{align}
Now note that if $\Phi$ and $\Psi$ are diagonalizable, the eigenvectors form a basis, and any vector $\mathbf{v}$ can be
written as a linear combination of eigenvectors of $\Phi,\Psi$, where $\mathbf{v} = \sum_{\ell=1}^{N_x} \alpha_\ell \mathbf{u}_\ell$.
Then, because $U^{-1}\mathbf{u}_i = \mathbf{e}_i$, where $\mathbf{e}_i$ is the $i$th canonical basis vector, \eqref{eq:almost}
reduces to 
\begin{align*}
\sum_{\ell=0}^{N_x-1} \alpha_\ell |\mu_\ell - \lambda_\ell^k|^p \leq \varphi_{F,1} \sum_{\ell=0}^{N_x-1} (1-|\mu_\ell|)^p.
\end{align*}
Note that this is only satisfied for all $\mathbf{v}$ if, for every eigenvalue index $i$,
\begin{align*}
|\mu_i - \lambda_i^k|^p \leq \varphi_{F,p} (1 - |\mu_i|)^p
	\hspace{5ex}\Longleftrightarrow\hspace{5ex}
|\mu_i - \lambda_i^k| \leq \varphi_{F,p} (1 - |\mu_i|).
\end{align*}
Indeed, this is exactly the F-TEAP introduced in Section \ref{sec:res:comm}. A similar property holds for FCF-relaxation, which
is summarized in the following proposition.

\begin{proposition}[Equivalent approximation properties]
The F-TEAP is the same as the F-TAP$_p$, in the $(UU^*)^{-1}$-norm, for arbitrary $p$, and the FCF-TEAP is the same as the FCF-TAP$_p$ in the $(UU^*)^{-1}$-norm, for arbitrary $p$. If $\Phi$ and $\Psi$ are normal, the two types of approximation property are identical.
\end{proposition}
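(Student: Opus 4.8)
The plan is to use simultaneous diagonalizability to collapse both TAPs in the $(UU^*)^{-1}$-norm onto the spectra of $\Phi$ and $\Psi$, and then observe that the resulting inequality decouples across eigenvalue indices. Write $\Phi = U\Lambda U^{-1}$ and $\Psi = U\Xi U^{-1}$ with $\Lambda_{ii}=\lambda_i$, $\Xi_{ii}=\mu_i$. The key identity is $\|\mathbf{w}\|_{(UU^*)^{-1}}^2 = \langle (UU^*)^{-1}\mathbf{w},\mathbf{w}\rangle = \|U^{-1}\mathbf{w}\|^2$, so after the substitution $\mathbf{z} = U^{-1}\mathbf{v}$ the F-TAP$_p$ in the $(UU^*)^{-1}$-norm becomes, for all $\mathbf{z}$,
\begin{align*}
\|(\Xi-\Lambda^k)^p\mathbf{z}\| \leq \varphi_{F,p}\left[\min_{x\in[0,2\pi]}\|(I-e^{\mathrm{i}x}\Xi)^p\mathbf{z}\|\right],
\end{align*}
and the FCF-TAP$_p$ becomes the same statement with $(I-e^{\mathrm{i}x}\Xi)^p$ replaced by $\Lambda^{-kp}(I-e^{\mathrm{i}x}\Xi)^p$. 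Both sides now involve only diagonal operators, so in coordinates $\|(\Xi-\Lambda^k)^p\mathbf{z}\|^2 = \sum_\ell |\mu_\ell-\lambda_\ell^k|^{2p}|z_\ell|^2$ and $\|(I-e^{\mathrm{i}x}\Xi)^p\mathbf{z}\|^2 = \sum_\ell |1-e^{\mathrm{i}x}\mu_\ell|^{2p}|z_\ell|^2$.

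The step I expect to be the only real obstacle is the single minimization variable $x$ that is shared by all coordinates; the resolution is the elementary scalar bound $|1-e^{\mathrm{i}x}\mu_\ell| \geq 1-|\mu_\ell|$, which holds for every $x$ (with equality exactly when $e^{\mathrm{i}x}\mu_\ell\in[0,\infty)$) and uses $|\mu_\ell|<1$ from Assumption 2. For necessity I would test $\mathbf{z}=\mathbf{e}_i$: the inequality reduces to $|\mu_i-\lambda_i^k|^p \leq \varphi_{F,p}\min_x|1-e^{\mathrm{i}x}\mu_i|^p = \varphi_{F,p}(1-|\mu_i|)^p$, i.e. the per-mode condition. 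For sufficiency, given the per-mode condition I would sum it against the weights $|z_\ell|^2$ and then apply the pointwise bound termwise for each fixed $x$, obtaining $\sum_\ell|\mu_\ell-\lambda_\ell^k|^{2p}|z_\ell|^2 \leq \varphi_{F,p}^2\sum_\ell(1-|\mu_\ell|)^{2p}|z_\ell|^2 \leq \varphi_{F,p}^2\sum_\ell|1-e^{\mathrm{i}x}\mu_\ell|^{2p}|z_\ell|^2$ for all $x$, hence also after minimizing over $x$. Thus the F-TAP$_p$ in the $(UU^*)^{-1}$-norm is equivalent to $|\mu_i-\lambda_i^k|^p \leq \varphi_{F,p}(1-|\mu_i|)^p$ for every eigenvalue index $i$, which is exactly the F-TEAP (after the harmless $p$th-root rescaling of the constant); in particular the characterization does not involve $p$, which is the ``for arbitrary $p$'' claim.

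The FCF case goes through verbatim after carrying the diagonal factor $\Lambda^{-kp}$ through the norm: the per-mode minimum becomes $|\lambda_i^{-k}|^p(1-|\mu_i|)^p$, so the condition is $|\mu_i-\lambda_i^k| \leq \varphi_{FCF,p}(1-|\mu_i|)/|\lambda_i^k|$, which is precisely the FCF-TEAP. Finally, when $\Phi$ and $\Psi$ are normal one may take $U$ unitary, so $UU^*=I$ and the $(UU^*)^{-1}$-norm is literally the $\ell^2$-norm; the F-TAP$_p$ and FCF-TAP$_p$ in that norm are then just the F-TAP$_p$ and FCF-TAP$_p$ of Section \ref{sec:res:th}, so in the normal case the TAP$_p$ and the TEAP coincide as properties. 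No further work is needed, since the argument is an exact characterization rather than an estimate.
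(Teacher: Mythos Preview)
Your argument is correct and follows essentially the same route as the paper's discussion preceding the proposition: diagonalize via $\mathbf{z}=U^{-1}\mathbf{v}$, reduce to a coordinate-wise inequality, and observe that testing on canonical basis vectors yields the TEAP. You are in fact more careful than the paper on the one subtle point---the single shared minimizer $x$---which you resolve cleanly via the pointwise bound $|1-e^{\mathrm{i}x}\mu_\ell|\geq 1-|\mu_\ell|$; the paper's displayed reduction glosses over this (and contains some minor typos in the summed inequality), but the underlying idea is the same.
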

\begin{proof}
The proof follows from the above discussion.
\end{proof}

We are now ready to present the final result.

\begin{theorem}[Tight bounds -- multiple iterations]\label{th:tight}
Let $\Phi$ denote the fine-grid time-stepping operator and $\Psi$ denote the coarse-grid time-stepping operator,
with coarsening factor $k$, and $N_c$ coarse-grid time points. Assume that $\Phi$ and $\Psi$ commute and are
diagonalizable, with eigenvectors given as columns of $U$. Suppose that $\Phi$ satisfies an F-TEAP with
respect to $\Psi$, with constant $\varphi_{F}$. Then, for $p\geq 1$,
\begin{align*}
\left\|(I - A_\Delta B_\Delta^{-1})^p\right\|_{(UU^*)^{-1}}^2 & = \varphi_{F}^{2p} - O(1/N_c^2).
\end{align*}
Similarly, suppose that $\Phi$ satisfies an FCF-TEAP with
respect to $\Psi$, with constant $\varphi_{FCF}$. Then, for $p\geq 1$,
\begin{align*}
\left\|\left((I - A_\Delta B_\Delta^{-1})A_{cf}A_{ff}^{-1}A_{fc}\right)^p\right\|_{(UU^*)^{-1}} ^2 & = \varphi_{FCF}^{2p} - O(1/N_c^2).
\end{align*}

\end{theorem}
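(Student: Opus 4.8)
The plan is to push everything down to the eigenvalue level, where Theorem~\ref{th:diag_tight} has already carried out the hard analytic work. Equation~\eqref{eq:p_bounds} gives, for $p\geq 1$, the exact identities
\begin{align*}
\left\|(I - A_\Delta B_\Delta^{-1})^p\right\|_{(UU^*)^{-1}}^2 &= \sup_i \frac{|\mu_i - \lambda_i^k|^{2p}}{(1-|\mu_i|)^{2p} + O(1/N_c^2)}, \\
\left\|\left((I - A_\Delta B_\Delta^{-1})A_{cf}A_{ff}^{-1}A_{fc}\right)^p\right\|_{(UU^*)^{-1}}^2 &= \sup_i \frac{|\lambda_i^k|^{2p}\,|\mu_i - \lambda_i^k|^{2p}}{(1-|\mu_i|)^{2p} + O(1/N_c^2)},
\end{align*}
so the whole theorem reduces to evaluating these suprema using the F-TEAP \eqref{eq:teap_f} and the FCF-TEAP \eqref{eq:teap_fcf}, where $\varphi_F$ and $\varphi_{FCF}$ are taken to be the smallest admissible constants (so that equality is attained in \eqref{eq:teap_f}, resp.\ \eqref{eq:teap_fcf}, at some eigenvalue index).

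First I would record the one observation that makes the error bookkeeping legitimate: by Assumption~2 every eigenvalue obeys $|\mu_i|<1$, and since the spectrum has fixed size $N_x$ independent of $N_c$, the quantity $\delta := \min_i (1-|\mu_i|)$ is a strictly positive constant; hence dividing the additive $O(1/N_c^2)$ term in any denominator above by $(1-|\mu_i|)^{2p}\geq\delta^{2p}$ still leaves an $O(1/N_c^2)$ term, uniformly in $i$. With this in hand, for the upper bound I substitute $|\mu_i - \lambda_i^k| \leq \varphi_F(1-|\mu_i|)$ into each numerator, cancel a factor of $(1-|\mu_i|)^{2p}$ against the leading term of the denominator, and use that the $O(1/N_c^2)$ correction is positive (explicit as $\pi^2|\mu_i|/(6N_c^2)$ in \eqref{eq:v_bounds}, cf.\ Lemma~\ref{lem:tridiag}) to conclude that each term, hence the supremum, is $\leq \varphi_F^{2p}$. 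For the matching lower bound, let $i^\star$ attain equality in \eqref{eq:teap_f}; restricting the supremum to $i=i^\star$ gives
\[
\left\|(I - A_\Delta B_\Delta^{-1})^p\right\|_{(UU^*)^{-1}}^2 \;\geq\; \frac{\varphi_F^{2p}(1-|\mu_{i^\star}|)^{2p}}{(1-|\mu_{i^\star}|)^{2p} + O(1/N_c^2)} \;=\; \frac{\varphi_F^{2p}}{1 + O(1/N_c^2)} \;=\; \varphi_F^{2p} - O(1/N_c^2).
\]
Combining the two estimates yields $\left\|(I - A_\Delta B_\Delta^{-1})^p\right\|_{(UU^*)^{-1}}^2 = \varphi_F^{2p} - O(1/N_c^2)$, and the FCF statement follows verbatim after rewriting \eqref{eq:teap_fcf} as $|\lambda_i^k|\,|\mu_i - \lambda_i^k| \leq \varphi_{FCF}(1-|\mu_i|)$, so that the extra $|\lambda_i^k|^{2p}$ factor is absorbed exactly as before.

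The step I expect to be the main obstacle is the one just highlighted: separating the additive $O(1/N_c^2)$ perturbation in the denominator from the leading $(1-|\mu_i|)^{2p}$ term, uniformly in the eigenvalue index, which fails if any $|\mu_i|$ is allowed to approach $1$. This is precisely why stability (Assumption~2) together with finiteness of the spectrum is needed, and I would state it explicitly at the outset; everything else is a routine cancellation. I would close by remarking that when $\Phi$ and $\Psi$ are normal, $U$ is unitary and $(UU^*)^{-1}=I$, so these become genuine $\ell^2$-norm identities, which in particular sharpen the upper bounds of~\cite{DobrevKolevPeterssonSchroder2017} to exact values modulo $O(1/N_c^2)$.
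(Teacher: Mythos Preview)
Your proposal is correct and follows essentially the same approach as the paper: both invoke Theorem~\ref{th:diag_tight} (specifically \eqref{eq:p_bounds}), substitute the TEAP at the extremal index where equality holds, and then expand the resulting fraction to expose the $O(1/N_c^2)$ correction. Your version is, if anything, slightly more careful than the paper's, which jumps directly to the value at $i_{\max}$ and does a Laurent expansion, whereas you separate the upper bound (via the TEAP inequality for all $i$) from the lower bound (via the extremal $i^\star$) and make the uniformity-in-$i$ argument explicit through $\delta=\min_i(1-|\mu_i|)>0$.
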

\begin{proof}
By assumption of the T-FEAP and Theorem \ref{th:diag_tight}, to order $O(1/N_c^2)$,
\begin{align*}
\left\|(I - A_\Delta B_\Delta^{-1})^p\right\|_{(UU^*)^{-1}}^2 & = \sup_i  \frac{\left(|\mu_i - \lambda_i^k|^p\right)^2}{(1 - |\mu_i|)^{2p} + O(1/N_c^2) }
	 = \frac{\varphi_{F}^{2p} (1 - |\mu_{i_{max}}|)^{2p} }{(1 - |\mu_{i_{max}}|)^{2p} + O(1/N_c^2)}.
\end{align*}
Note, equality holds in the second relation because there exists some $i={i_{max}}$ such that $\varphi_{F}$ is tight.
A simple Taylor/Laurent series argument about $N_c=\infty$ confirms that for $N_c > 1/(1 - |\mu_i|)^{p}$,
\begin{align*}
\frac{1}{(1 - |\mu_i|)^{2p} + O(1/N_c^2)} = \frac{1}{(1 - |\mu_i|)^{2p}} - \frac{1}{N_c^2(1 - |\mu_i|)^{4p}} + \frac{1}{N_c^4(1 - |\mu_i|)^{6p}} -...
\end{align*}
This yields $\left\|(I - A_\Delta B_\Delta^{-1})^p\right\|_{(UU^*)^{-1}}^2 = \varphi_{F}^{2p} - O(1/N_c^2).$
An analogous proof confirms the result for FCF-relaxation. 
\end{proof}

Interestingly, despite having $1 - |\mu_i|$ in the denominator, it is typically \textit{not} eigenvalues $|\mu_i| \approx 1$ for which
the maximum $\varphi_F$ is obtained \cite{DobrevKolevPeterssonSchroder2017}. To that end, the $O(1/N_c^2)$ in Theorem
\ref{th:tight} will generally be fairly small, except for potentially in the case of very large $p$. How tight the bounds
are clearly depends on the size of $p$ and $N_c$; in practice, however, Theorem \ref{th:tight} indicates that
the upper bound on convergence in the $(UU^*)^{-1}$-norm will generally not improve in later iterations, that is,
$\|\mathcal{E}^p\|_{(UU^*)^{-1}} \approx \|\mathcal{E}\|_{(UU^*)^{-1}}^p$. 

These results also lead to a corollary, which proves that, in some cases, the bounds derived in
\cite{DobrevKolevPeterssonSchroder2017} are asymptotically exact in $N_c$, in a single-iteration sense. 

\begin{corollary}[Sharp matrix inequalities]
The matrix norm inequality $\|M\|^2_2\leq\|M\|_1\|M\|_\infty$ \cite[Fact 11.9.27]{bernstein2018scalar} is
asymptotically exact, for $\left[I - A_\Delta B_\Delta^{-1}\right]_i$ and
$\left[(I - A_\Delta B_\Delta^{-1})A_{cf}A_{ff}^{-1}A_{fc}\right]_i$; that is,
\begin{align}\label{eq:holder}
\lim_{N_c\to\infty} \Big\|\left[I - A_\Delta B_\Delta^{-1}\right]_i\Big\|_2^2 =
	\Big\|\left[I - A_\Delta B_\Delta^{-1}\right]_i\Big\|_1\Big\|\left[I - A_\Delta B_\Delta^{-1}\right]_i\Big\|_\infty,
\end{align}
and likewise for $\left\|\left[(I - A_\Delta B_\Delta^{-1})A_{cf}A_{ff}^{-1}A_{fc}\right]_i\right\|_2^2$.

Moreover, 
\begin{align*}
\lim_{N_c\to\infty}\left\|(I - A_\Delta B_\Delta^{-1})^p\right\|_{(UU^*)^{-1}} & =
	\lim_{N_c\to\infty}\left\|I - A_\Delta B_\Delta^{-1}\right\|_{(UU^*)^{-1}}^p \\
	& =  \left(\sup_i  \frac{|\mu_i - \lambda_i^k|}{1 - |\mu_i|}\right)^p , \\
\lim_{N_c\to\infty} \left\|\left((I - A_\Delta B_\Delta^{-1})A_{cf}A_{ff}^{-1}A_{fc}\right)^p\right\|_{(UU^*)^{-1}} & =
	\lim_{N_c\to\infty} \left\|(I - A_\Delta B_\Delta^{-1})A_{cf}A_{ff}^{-1}A_{fc}\right\|_{(UU^*)^{-1}}^p \\
	& =\left(\sup_i \frac{|\lambda_i^k||\mu_i - \lambda_i^k|}{1 - |\mu_i| } \right)^p.
\end{align*}
\end{corollary}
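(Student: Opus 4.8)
The plan is to compute the $\|\cdot\|_1$, $\|\cdot\|_\infty$, and $\|\cdot\|_2$ operator norms of the per-eigenmode matrices directly and show all three agree in the limit $N_c\to\infty$. First I would observe that, by \eqref{eq:cgc_res}, the matrix $[I - A_\Delta B_\Delta^{-1}]_i$ equals $(\mu_i-\lambda_i^k)$ times the $N_c\times N_c$ strictly lower triangular matrix $L$ with entries $L_{rc}=\mu_i^{\,r-c-1}$ for $r>c$ and $L_{rc}=0$ otherwise, i.e.\ the scalar instance of $A_0$ from \eqref{eq:A0} with $f=\mu_i$ and $g=h=1$. Reading off absolute column and row sums of $L$, both $\|L\|_1$ and $\|L\|_\infty$ equal $\sum_{\ell=0}^{N_c-2}|\mu_i|^\ell = (1-|\mu_i|^{N_c-1})/(1-|\mu_i|)$, which converges to $1/(1-|\mu_i|)$ since Assumption 2 forces $|\mu_i|<1$. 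Hence the right-hand side of \eqref{eq:holder} tends to $|\mu_i-\lambda_i^k|^2/(1-|\mu_i|)^2$. On the other hand, specializing the two-sided bound \eqref{eq:v_bounds} of Theorem \ref{th:diag_tight} to the single index $i$ sandwiches $\|[I - A_\Delta B_\Delta^{-1}]_i\|_2$ between $|\mu_i-\lambda_i^k|/\sqrt{(1-|\mu_i|)^2+\pi^2|\mu_i|/N_c^2}$ and $|\mu_i-\lambda_i^k|/\sqrt{(1-|\mu_i|)^2+\pi^2|\mu_i|/(6N_c^2)}$, both of which tend to $|\mu_i-\lambda_i^k|/(1-|\mu_i|)$; thus $\|[I - A_\Delta B_\Delta^{-1}]_i\|_2^2$ has the same limit, establishing \eqref{eq:holder}. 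The FCF statement is identical once one recalls from \eqref{eq:relax_block} that $A_{cf}(A_{ff})^{-1}A_{fc}$ shifts columns left and scales by $\Phi^k$, so $[(I - A_\Delta B_\Delta^{-1})A_{cf}A_{ff}^{-1}A_{fc}]_i$ is $\lambda_i^k$ times the scalar instance of $A_1$ from \eqref{eq:A1}: its $1$- and $\infty$-norms simply acquire the factor $|\lambda_i^k|$, and the matching $\|\cdot\|_2$-limit is the FCF line of \eqref{eq:v_bounds}.

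For the ``moreover'' statement I would first use \eqref{eq:i_norm} to reduce $\|(I - A_\Delta B_\Delta^{-1})^p\|_{(UU^*)^{-1}}$ to $\sup_i \|[I - A_\Delta B_\Delta^{-1}]_i^{\,p}\|_2$. Writing $[I - A_\Delta B_\Delta^{-1}]_i = (\mu_i-\lambda_i^k)(I-\mu_i S)^{-1}S$ with $S$ the lower shift matrix, the $p$th power is $(\mu_i-\lambda_i^k)^p (I-\mu_i S)^{-p}S^p$, whose $(r,c)$ entry is $(\mu_i-\lambda_i^k)^p\binom{r-c-1}{p-1}\mu_i^{\,r-c-p}$ for $r-c\ge p$ and $0$ otherwise. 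Summing the largest row and the largest column then gives $\|[I - A_\Delta B_\Delta^{-1}]_i^{\,p}\|_1 = \|[I - A_\Delta B_\Delta^{-1}]_i^{\,p}\|_\infty = |\mu_i-\lambda_i^k|^p\sum_{m=p}^{N_c-1}\binom{m-1}{p-1}|\mu_i|^{m-p}$, which converges to $|\mu_i-\lambda_i^k|^p/(1-|\mu_i|)^p$. Combining this with the matrix inequality $\|M\|_2^2\le\|M\|_1\|M\|_\infty$ \cite[Fact 11.9.27]{bernstein2018scalar} bounds $\limsup_{N_c\to\infty}\|[I - A_\Delta B_\Delta^{-1}]_i^{\,p}\|_2^2$ above by $(|\mu_i-\lambda_i^k|/(1-|\mu_i|))^{2p}$; the matching lower bound is precisely the per-$i$ estimate underlying Theorem \ref{th:nec_diag} (the Lemma \ref{lem:pinv2}/Lemma \ref{lem:sig} pseudoinverse argument applied to this single eigenmode), namely $\|[I - A_\Delta B_\Delta^{-1}]_i^{\,p}\|_2\ge|\mu_i-\lambda_i^k|^p/\sqrt{(1-|\mu_i|)^{2p}+O(1/N_c^2)}$, which also tends to $|\mu_i-\lambda_i^k|^p/(1-|\mu_i|)^p$. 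Since there are finitely many eigenvalue indices, the limit commutes with $\sup_i$, and since $x\mapsto x^p$ is monotone on $[0,\infty)$ we obtain $\lim_{N_c\to\infty}\|(I - A_\Delta B_\Delta^{-1})^p\|_{(UU^*)^{-1}} = \sup_i\big(|\mu_i-\lambda_i^k|/(1-|\mu_i|)\big)^p = \big(\sup_i|\mu_i-\lambda_i^k|/(1-|\mu_i|)\big)^p$; this also equals $\lim_{N_c\to\infty}\|I - A_\Delta B_\Delta^{-1}\|_{(UU^*)^{-1}}^p$ by the $p=1$ case already proved. The FCF identity follows verbatim, carrying the factors $|\lambda_i^k|^p$ through via the $A_1$ form of \eqref{eq:A1} and \eqref{eq:relax_block}.

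The main obstacle will be making the interchange of $\lim_{N_c\to\infty}$ and $\sup_i$ genuinely rigorous alongside the bookkeeping for the $p$th-power matrix: one must verify that the binomial-weighted geometric series converges uniformly over the fixed, finite spectrum and that the $O(1/N_c^2)$ error term in the lower bound from Theorem \ref{th:nec_diag} is uniform in $i$, so that the squeeze yields an equality of limits rather than merely a pointwise-in-$i$ statement. Everything else reduces to elementary computation of matrix absolute row/column sums and to invoking the already-established Theorems \ref{th:diag_tight} and \ref{th:nec_diag}.
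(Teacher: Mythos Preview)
Your argument is correct and, for the first part \eqref{eq:holder}, matches the paper's proof essentially line for line: compute the $1$- and $\infty$-norms of $[I-A_\Delta B_\Delta^{-1}]_i$ as geometric sums and squeeze the $2$-norm with the two-sided bound \eqref{eq:v_bounds}.

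For the ``moreover'' part you take a somewhat different route from the paper. The paper simply invokes Theorem~\ref{th:tight} (equivalently, the $p$-power identity \eqref{eq:p_bounds} in Theorem~\ref{th:diag_tight}), which already states $\|(I-A_\Delta B_\Delta^{-1})^p\|_{(UU^*)^{-1}}^2 = \varphi_F^{2p} - O(1/N_c^2)$, and lets $N_c\to\infty$. You instead compute the $p$th power $[I-A_\Delta B_\Delta^{-1}]_i^{\,p}$ explicitly via $(I-\mu_i S)^{-p}S^p$, evaluate its $1$- and $\infty$-norms as a truncated negative-binomial series, and then squeeze with the H\"older-type inequality above and the lower bound from Theorem~\ref{th:nec_diag}. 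This is correct but more laborious than necessary: you are essentially re-deriving the upper half of \eqref{eq:p_bounds} by hand when it is already available. Your version does have the minor virtue of making the role of the $\|M\|_2^2\le\|M\|_1\|M\|_\infty$ inequality explicit for general $p$, not just $p=1$. The uniformity concern you flag at the end is genuine but trivially resolved, as you note, by finiteness of the spectrum.
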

\begin{proof}
The inequality in \eqref{eq:holder} was used in \cite{DobrevKolevPeterssonSchroder2017} to establish bounds
\begin{align*}
\left\|\left[I - A_\Delta B_\Delta^{-1}\right]_i\right\| & \leq \frac{ |\mu_i-\lambda_i^k|(1-|\mu_i|^{N_c-1})}{1 - |\mu_i|}, \\
\left\|\left[(I - A_\Delta B_\Delta^{-1})A_{cf}A_{ff}^{-1}A_{fc}\right]_i\right\| & \leq\frac{ |\lambda_i^k||\mu_i-\lambda_i^k|(1-|\mu_i|^{N_c-2})}{1 - |\mu_i|}.
\end{align*}
It is easily verified that as $N_c\to\infty$, these bounds asymptote as in \eqref{eq:v_bounds}.
The second result follows from a limiting argument applied to Theorem \ref{th:tight}.
\end{proof}

\section{Time-dependent operators}\label{sec:time}

\subsection{The general case}\label{sec:time:gen}

The previous section focused on the specific case of commuting, diagonalizable time-stepping operators. This section
moves to the more general setting of (almost) arbitrary, linear time-stepping operators. In particular, we drop the
assumption that $\Phi$ and $\Psi$ are fixed for all time points, allowing for $\Phi$ and $\Psi$ to be time-dependent
operators. Much of the theory so far has, on some level, been based on Toeplitz matrix theory. Allowing for time-dependent
operators leads to non-Toeplitz block matrices, and such theory does not apply. Indeed, without some further assumptions
or knowledge of $\Phi$ and $\Psi$, in general results cannot be extended to the time-dependent setting. However, this section
shows that the pseudoinverse of $I-A_\Delta B_\Delta^{-1}$ derived in Lemma \ref{lem:pinv1} can indeed be extended to
the time-dependent setting. Although bounds for its minimum singular value are not clear, the resulting bi-diagonal matrix
is still more amenable to analysis than the dense lower-triangular matrix of $I-A_\Delta B_\Delta^{-1}$ \eqref{eq:cgc_res}.

{Note, in the time-dependent setting, $I - B_\Delta^{-1}A_\Delta \neq I - A_\Delta B_\Delta^{-1}$, even if $\Phi$ and $\Psi$
commute. To that end, this section considers $I-A_\Delta B_\Delta^{-1}$, corresponding to residual propagation in the
$\ell^2$-norm and error propagation in the $A^*A$-norm. However, similar results can be derived for error in the
$\ell^2$-norm based on analogous derivations applied to $I - B_\Delta^{-1}A_\Delta$. }

For preliminary notation, assume we are considering $N$ time points and a coarse grid of $N_c$ time points. Then
the linear system corresponding to time integration \eqref{eq:system} takes the generalized form of
\begin{align}\label{eq:system2}
A\mathbf{u} = \begin{bmatrix} I \\ -\Phi_1 & I \\ & -\Phi_2 & I \\ & & \ddots & \ddots \\
	& & & -\Phi_{N-1} & I \end{bmatrix}
	\begin{bmatrix} \mathbf{u}_0 \\ \mathbf{u}_1\\ \mathbf{u}_2 \\ \vdots \\ \mathbf{u}_{N-1}\end{bmatrix} = \mathbf{f},
\end{align}
where $\Phi_i$ denotes $\Phi$ evaluated at time point $t_i$. As in the time-independent case, there is a closed
form for inverses with the form of \eqref{eq:system2}, which will prove useful for further derivations:
\begin{align}\label{eq:geninv}
\begin{bmatrix} I \\ -\Phi_1 & I \\ & -\Phi_2 & I \\ & & \ddots & \ddots \\ & & & -\Phi_{N-1} & I \end{bmatrix}^{-1}
	& =
\begin{bmatrix} I \\ \Phi_1 & I \\
	\Phi_2\Phi_1 & \Phi_2 & I \\
	\Phi_3\Phi_2\Phi_1 & \Phi_3\Phi_2 & \Phi_3 & I \\ 
	\vdots & \vdots  & & \ddots  & \ddots \\
	\Phi_{N-1}...\Phi_1 & \Phi_{N-1}...\Phi_2 & ... & ... & \Phi_{N-1} & I
\end{bmatrix}.
\end{align}
Excusing the slight abuse of notation, define $\Phi_{i}^{j} := \Phi_i\Phi_{i-1}...\Phi_j$. Then, using the inverse in
\eqref{eq:geninv} and analogous matrix derivations as in Section \ref{sec:conv:mat}, leads to a Schur complement
coarse grid given by 
\begin{align*}
A_\Delta = \begin{bmatrix} I \\ -\Phi_k^1 & I \\ & -\Phi_{2k}^{k+1} & I \\ & & \ddots & \ddots \\
	& & & -\Phi_{(N_c-1)k}^{(N_c-2)k+1} & I \end{bmatrix}.
\end{align*}
Notice that, despite the more complicated notation, the Schur-complement coarse-grid operator in the time-dependent
case does exactly what it does in the time-independent case \eqref{eq:rap}: it takes exactly $k$ steps on the fine grid,
in this case using the appropriate sequence of time-dependent operators. Let $\Psi_i$ denote the non-Galerkin
approximation to $\Phi_{ik}^{(i-1)k+1}$. Then, the operator we are primarily interested in for error and residual
propagation, $I-A_\Delta B_\Delta ^{-1}$, is given by
{\footnotesize
\begin{align}\label{eq:genI}
I - A_\Delta B_\Delta^{-1} & =
\begin{bmatrix} 
	\mathbf{0} \\
	\Phi_k^1-\Psi_1 & \mathbf{0} \\
	(\Phi_{2k}^{k+1}-\Psi_2)\Psi_1 & (\Phi_{2k}^{k+1}-\Psi_2) & \mathbf{0} \\
	(\Phi_{3k}^{2k+1}-\Psi_3)\Psi_2^1 & (\Phi_{3k}^{2k+1}-\Psi_3)\Psi_2 & \Phi_{3k}^{2k+1}-\Psi_3& \mathbf{0} \\
	\vdots & \vdots & & \ddots & \ddots \\
	\left(\Phi_{(N_c-1)k}^{(N_c-2)k+1}-\Psi_{N_c-1}\right)\Psi_{N_c-2}^1 & 
		 & ... &  & \Phi_{(N_c-1)k}^{(N_c-2)k+1}-\Psi_{N_c-1} & \mathbf{0}
\end{bmatrix}.
\end{align}
}

Moreover, the pseudoinverses derived in Lemma \ref{lem:pinv1} can be extended to the time-dependent case.

\begin{lemma}[Time-dependent pseudoinverse]\label{lem:pinv3}
Let $\{\Phi_i\}_{i=1}^{N-1}$, and $\{\Psi_i\}_{i=1}^{N_c-1}$ denote two sets of operators and, for notation, define
$\Phi_{i}^{j} := \Phi_i\Phi_{i-1}...\Phi_j$. Assume that $\left(\Phi_{ik}^{(i-1)k+1} - \Psi_i\right)$ is invertible, for $i=1,...,N_c-1$,
and define $I - A_\Delta B_\Delta^{-1}$ as in \eqref{eq:genI}. Then,
{\footnotesize
\begin{align}
\begin{split}\label{eq:gen_pinv}
(I - &A_\Delta B_\Delta^{-1})^\dagger = \\ &
\begin{bmatrix}
	\mathbf{0} & (\Phi_k^1-\Psi_1)^{-1} \\
	\mathbf{0} & -\Psi_1(\Phi_k^1-\Psi_1)^{-1} & (\Phi_{2k}^{k+1}-\Psi_2)^{-1} \\
	 & & \ddots & \ddots \\
	 && & -\Psi_{N_c-2}\left(\Phi_{(N_c-2)k}^{(N_c-3)k+1}-\Psi_{N_c-2}\right)^{-1} &  \left(\Phi_{(N_c-1)k}^{(N_c-2)k+1}-\Psi_{N_c-1}\right)^{-1} \\
	 & & & \mathbf{0} & \mathbf{0} 
\end{bmatrix}.
\end{split}
\end{align}
}

\end{lemma}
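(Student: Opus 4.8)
The plan is to follow the proof of Lemma~\ref{lem:pinv1} essentially verbatim, since that argument was purely algebraic and never invoked Toeplitz structure; only the bookkeeping changes once the coefficients become time-indexed. Write $M := I - A_\Delta B_\Delta^{-1}$ as displayed in \eqref{eq:genI}, abbreviate $G_i := \Phi_{ik}^{(i-1)k+1} - \Psi_i$ (invertible, by hypothesis), and let $Q$ denote the candidate matrix on the right-hand side of \eqref{eq:gen_pinv}. Let $\Pi_R := \operatorname{diag}(\mathbf{0}, I, \dots, I)$ and $\Pi_L := \operatorname{diag}(I, \dots, I, \mathbf{0})$ be the Hermitian block-projections onto the last $N_c-1$ and the first $N_c-1$ block coordinates, respectively. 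Since $M$ has a vanishing first block-row and a vanishing last block-column, it factors as $M = \Pi_R M = M \Pi_L$, exactly the situation of \eqref{eq:pinv_props}. Hence, just as in Lemma~\ref{lem:pinv1}, it suffices to verify the three identities $MQ = \Pi_R$, $QM = \Pi_L$, and $Q\Pi_R = Q$: the first two then force $MQM = M$ and $QMQ = Q$ (using the factorization $M = \Pi_R M = M\Pi_L$ together with $\Pi_R^2 = \Pi_R$, $\Pi_L^2 = \Pi_L$), while the Hermiticity of $\Pi_R$ and $\Pi_L$ supplies the two self-adjointness conditions; uniqueness of the Moore--Penrose pseudoinverse then yields $Q = M^\dagger$.

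The third identity $Q\Pi_R = Q$ is immediate, because the first block-column of $Q$ is zero. For the other two I would record the block entries directly from the displays: below its (zero) diagonal, $M$ has block $M_{i,j} = G_{i-1}\,\Psi_{i-2}\Psi_{i-3}\cdots\Psi_{j}$ for $1 \le j \le i-2$ and $M_{i,i-1} = G_{i-1}$, while $Q$ is upper bidiagonal with $Q_{j,j+1} = G_j^{-1}$ for $1 \le j \le N_c - 1$ and $Q_{j,j} = -\Psi_{j-1}G_{j-1}^{-1}$ for $2 \le j \le N_c-1$, all other blocks (in particular the entire first block-column and last block-row) being zero. Forming $MQ$ and $QM$ block by block, each entry is a two-term sum (because $Q$ is bidiagonal, and because in a fixed block-column of $M$ only two consecutive rows survive the multiplication), and the two terms telescope: the falling chain $\Psi_{i-2}\cdots\Psi_\ell$ carried by $M$ cancels against the adjacent $G$-inverse and trailing $\Psi$ of $Q$, leaving $I$ on the appropriate diagonal blocks and $\mathbf{0}$ elsewhere, so that $MQ = \Pi_R$ and $QM = \Pi_L$. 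This is precisely the cancellation used in Lemma~\ref{lem:pinv1}, with the constant scalars $f$ and $g$ there replaced by the time-indexed $\Psi_i$ and $G_i$, and the auxiliary factor $h$ set to the identity.

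There is no serious conceptual obstacle here; the only real care needed is index bookkeeping at the boundaries — the zero first block-row and last block-column of $M$, the zero first block-column and last block-row of $Q$ — so that the telescoping products land exactly on $\Pi_R$ and $\Pi_L$ rather than on a nearby but incorrect projection, and so that the empty-product convention for the $\Psi$-chains is applied consistently. As in Lemma~\ref{lem:pinv1}, invertibility of the $G_i$ (and nothing about the $\Psi_i$, which are never inverted) is exactly what lets $M^\dagger$ take the clean bidiagonal form of \eqref{eq:gen_pinv}; the footnote caveat there — that replacing an inverse with a genuine pseudoinverse would break the self-adjointness condition $(M^\dagger M)^* = M^\dagger M$ — applies here unchanged.
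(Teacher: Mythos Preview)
Your proposal is correct and follows exactly the approach the paper takes: the paper's own proof is a single sentence stating that, following Lemma~\ref{lem:pinv1}, one confirms directly from \eqref{eq:genI} and \eqref{eq:gen_pinv} that the four Moore--Penrose properties hold. Your write-up simply carries out that verification in full, with the same reduction to the three identities of \eqref{eq:pinv_props} and the same telescoping cancellation; there is nothing to add or correct.
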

\begin{proof}
Following from the proof of Lemma \ref{lem:pinv1}, it is easy to confirm from \eqref{eq:genI} and \eqref{eq:gen_pinv} that
$(I - A_\Delta B_\Delta^{-1})^\dagger$ satisfies the four properties of a pseuduinverse. 
\end{proof}

As in the time-dependent case, we seek the minimum nonzero singular value of $(I - A_\Delta B_\Delta^{-1})^\dagger$,
which is equivalent to the maximum singular value (and $\ell^2$-norm) of $I - A_\Delta B_\Delta^{-1}$. This can be
expressed as a minimization over a linear combination of operators as follows:
\begin{align}
\sigma_{\min}&\left((I-A_\Delta B_\Delta^{-1})^\dagger\right)^2 = \min_{\mathbf{v}\not\in\textnormal{ker}\left((I-A_\Delta B_\Delta^{-1})^\dagger\right)}
	\frac{\left\|(I-A_\Delta B_\Delta^{-1})^\dagger\mathbf{v}\right\|^2}{\|\mathbf{v}\|^2} \nonumber\\
& = \min_{\mathbf{v}_i,i=1,...,(N_c-1)} \frac{\left\| (\Phi_k^1-\Psi_1)^{-1}\mathbf{v}_1\right\|^2 + \sum_{i=1}^{N_c-2} \left\| 
	(\Phi_{(i+1)k}^{ik+1}-\Psi_{i+1})^{-1}\mathbf{v}_{i+1} - \Psi_i (\Phi_{ik}^{(i-1)k+1}-\Psi_i)^{-1}\mathbf{v}_i\right\|^2}
	{\sum_{i=1}^{N_c-1}\|\mathbf{v}_i\|^2} \nonumber\\
& = \min_{\mathbf{v}_i,i=1,...,(N_c-1)} \frac{\left\| \mathbf{v}_1\right\|^2 + \sum_{i=1}^{N_c-2} \left\| 
	\mathbf{v}_{i+1} - \Psi_i\mathbf{v}_i\right\|^2}
	{\sum_{i=1}^{N_c-1}\left\|(\Phi_{ik}^{(i-1)k+1}-\Psi_i)\mathbf{v}_i\right\|^2}. \label{eq:lowbound2}
\end{align}
Further analysis likely requires some knowledge on $\{\Phi_i\}$ and $\{\Psi_i\}$. In particular, now we are letting
$\{\Phi_i\}$ and $\{\Psi_i\}$ be completely arbitrary operators. In practice, there is typically some continuity
in how operators change between time steps, that is, $\Phi_i$ and $\Phi_{i+1}$ are similar in some sense. 

\subsection{The diagonalizable case}\label{sec:time:diag}

Finally, suppose that $\Phi$ and $\Psi$ are time-dependent, but simultaneously diagonalizable for all times, $t_i$.
In particular this allows for time-dependent reaction terms in the spatial operator, $\mathcal{L}$, and for variable 
time-step size, as occurs in, for example, adaptive time-stepping. To that end, 
\begin{align*}
\|I - A_\Delta B_\Delta^{-1}\|_{(UU^*)^{-1}} = \max_i &\left\|\left[I - A_\Delta B_\Delta^{-1}\right]_i\right\| = \frac{1}{\min_i \left\|\left[(I - A_\Delta B_\Delta^{-1})^\dagger\right]_i\right\|} \\ &= \frac{1}{\min_i \sigma_{max}\left(\left[(I - A_\Delta B_\Delta^{-1})^\dagger\right]_i\right)},
\end{align*}
where, recall, $\left[I - A_\Delta B_\Delta^{-1}\right]_i$ denotes \eqref{eq:gen_pinv} evaluated at eigenvalues of $\Phi$ and $\Psi$
as opposed to the actual operators. As previously, $\sigma_{max}\left(\left[(I - A_\Delta B_\Delta^{-1})^\dagger\right]_i\right)$
is given by the square root of the minimum nonzero eigenvalue of the corresponding normal residual equations (that is,
$MM^*$ as opposed to $M^*M$). Eliminating the final zero-row and zero-column (corresponding to the zero eigenvalue),
this is equivalent to the minimum eigenvalue of the tridiagonal matrix
\begin{align}\label{eq:tri_mat}
\begin{bmatrix}
\frac{1}{|\lambda_k^1-\mu_1|^2} & \frac{-\overline{\mu}_1}{|\lambda_k^1-\mu_1|^2} \\
\frac{-{\mu}_1}{|\lambda_k^1-\mu_1|^2} & \frac{|{\mu}_1|^2}{|\lambda_k^1-\mu_1|^2} + \frac{1}{|\lambda_{2k}^{k+1}-\mu_2|^2} & 
	\frac{-\overline{\mu}_2}{|\lambda_{2k}^{k+1}-\mu_2|^2} \\
& \frac{-{\mu}_2}{|\lambda_{2k}^{k+1}-\mu_2|^2} &  \frac{|{\mu}_2|^2}{|\lambda_{2k}^{k+1}-\mu_2|^2} + \frac{1}{|\lambda_{3k}^{2k+1}-\mu_3|^2}
	& \frac{-\overline{\mu}_3}{|\lambda_{3k}^{2k+1}-\mu_3|^2}\\ 
& & \ddots & \ddots & \ddots
\end{bmatrix}.
\end{align}
\textit{Note the change in notation} -- here, for example, $\lambda_k^1$ denotes the product of the $i$th eigenvalue of $\Phi$
at times $t_1,...,t_k$, that is, $\lambda_k^1 = \lambda_i(t_k)\lambda_i(t_{k-1})...\lambda_i(t_1)$, and $\mu_1$ denotes the $i$th
eigenvalue of $\Psi$ evaluated at time $t_1$. The $i$ is dropped from eigenvalues to limit subscript/superscript notation. 

This leads to the final result on convergence in the time-dependent case:

\begin{theorem}\label{th:nontoeplitz}
Let $\{\lambda_j\}^{(i)}$ and $\{\mu_j\}^{(i)}$ denote the sets of the $i$th eigenvalue of $\Phi$ and $\Psi$, respectively,
evaluated at time indices, $j=1,...,N_c-1$. Let $\hat{\sigma}_{\min}$ denote the minimum nonzero eigenvalue of \eqref{eq:tri_mat}.
Then,
\begin{align*}
\|I - A_\Delta B_\Delta^{-1}\|_{(UU^*)^{-1}} = \frac{1}{\sqrt{\hat{\sigma}_{\min}}}.
\end{align*}
Now, assume that for each eigenvalue index $i$, a TEAP-like approximation property holds, where,
for all $j=1,...,N_c-1$,
\begin{align*}
|\lambda_{jk}^{(j-1)k+1}-\mu_j|^2 \leq \widehat{\varphi}_j^{(i)} |1 - \mu_j|.
\end{align*}
Then,
\begin{align*}
\|I - A_\Delta B_\Delta^{-1}\|_{(UU^*)^{-1}}^2 & \leq \max_i \max\left\{ \widehat{\varphi}_1^{(i)}, \max_{j=1,...,N_c-2} \frac{\widehat{\varphi}_j^{(i)}\widehat{\varphi}_{j+1}^{(i)}}{ \widehat{\varphi}_j^{(i)} + \mu_j\widehat{\varphi}_{j+1}^{(i)}} \right\}.
\end{align*}
Moreover, sufficient conditions for convergence are that for all $i$ and for all $j$, 
\begin{align*}
\frac{\widehat{\varphi}_j^{(i)}\widehat{\varphi}_{j+1}^{(i)}}{ \widehat{\varphi}_j^{(i)} + \mu_j\widehat{\varphi}_{j+1}^{(i)}}  < 1,
\end{align*}
and, in addition, that for all $i$, $\widehat{\varphi}_1^{(i)} < 1$. 
\end{theorem}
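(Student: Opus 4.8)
The plan is to reduce the problem to a family of scalar, per-eigenmode problems, use the explicit bidiagonal pseudoinverse of Lemma~\ref{lem:pinv3}, and then estimate the smallest eigenvalue of the resulting (non-Toeplitz) tridiagonal matrix from below.

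\emph{Step 1 (the exact identity).} Since $\Phi$ and $\Psi$ diagonalize simultaneously at every time point, $I-A_\Delta B_\Delta^{-1}$ block-decomposes over eigenmodes exactly as in Section~\ref{sec:diag}, so that $\|I-A_\Delta B_\Delta^{-1}\|_{(UU^*)^{-1}}=\max_i\|[I-A_\Delta B_\Delta^{-1}]_i\|$, reducing everything to the scalar operators obtained by replacing $\Phi_{jk}^{(j-1)k+1}$ and $\Psi_j$ by their $i$th eigenvalues. Using $\|M\|=1/\sigma_{\min}(M^\dagger)$ together with the pseudoinverse in Lemma~\ref{lem:pinv3} (which specializes per eigenmode to a lower-bidiagonal matrix with a zero first column and a zero last row), the smallest nonzero singular value of $[(I-A_\Delta B_\Delta^{-1})^\dagger]_i$ equals the square root of the smallest nonzero eigenvalue of $[(I-A_\Delta B_\Delta^{-1})^\dagger]_i[(I-A_\Delta B_\Delta^{-1})^\dagger]_i^*$; this normal-equations matrix has one trivially zero row and column, and deleting it gives precisely \eqref{eq:tri_mat}. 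Defining $\widehat\sigma_{\min}$ as its smallest eigenvalue and maximizing over $i$ yields $\|I-A_\Delta B_\Delta^{-1}\|_{(UU^*)^{-1}}=1/\sqrt{\widehat\sigma_{\min}}$ --- essentially the computation carried out just before the theorem statement.

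\emph{Step 2 (the upper bound).} I would bound $\widehat\sigma_{\min}$ from below, eigenmode by eigenmode. Writing $\delta_j:=\lambda_{jk}^{(j-1)k+1}-\mu_j$, specialize \eqref{eq:lowbound2} to the $i$th eigenmode to obtain the Rayleigh-quotient identity $\|[I-A_\Delta B_\Delta^{-1}]_i\|^2=\max_{\mathbf v\neq\mathbf 0}\bigl(\sum_{j=1}^{N_c-1}|\delta_j|^2|v_j|^2\bigr)\big/\bigl(|v_1|^2+\sum_{j=1}^{N_c-2}|v_{j+1}-\mu_j v_j|^2\bigr)$, insert the TEAP-like hypothesis $|\delta_j|^2\le\widehat\varphi_j^{(i)}|1-\mu_j|$ into the numerator, and then estimate the smallest eigenvalue of \eqref{eq:tri_mat} by running its Sturm/continuant recursion: \eqref{eq:tri_mat} is a single-entry perturbation of a tridiagonal matrix built from the $\mu_j$, and consecutive leading principal minors are coupled only through $\mu_j$ and two adjacent diagonal entries (this is the time-dependent analogue of the perturbed tridiagonal-Toeplitz estimate, Lemma~\ref{lem:tridiag}, used in the time-independent case). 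The bookkeeping in that recursion is what produces the two-term ratios $\widehat\varphi_j^{(i)}\widehat\varphi_{j+1}^{(i)}/(\widehat\varphi_j^{(i)}+\mu_j\widehat\varphi_{j+1}^{(i)})$ together with the boundary term $\widehat\varphi_1^{(i)}$; maximizing over $i$ and over $j$ gives the stated bound on $\|I-A_\Delta B_\Delta^{-1}\|_{(UU^*)^{-1}}^2$.

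\emph{Step 3 (sufficient conditions) and the main difficulty.} If every ratio and every $\widehat\varphi_1^{(i)}$ is $<1$, then the bound from Step~2 gives $\|I-A_\Delta B_\Delta^{-1}\|_{(UU^*)^{-1}}<1$, so $\mathcal R_F=(I-A_\Delta B_\Delta^{-1})R_{\textnormal{ideal}}$ is contractive in that norm after the first iteration (using $\|R_{\textnormal{ideal}}\|<\sqrt k$, Lemma~\ref{lem:ideal_bound}), which is exactly the claimed sufficient condition for convergence. The main obstacle is Step~2: \eqref{eq:tri_mat} is genuinely non-Toeplitz, so Gershgorin- or trace/determinant-type bounds on its smallest eigenvalue are far too lossy (the extremal configuration has many nonzero components), and one must track carefully how the per-index constants $\widehat\varphi_j^{(i)}$ interact through the subdiagonal couplings $\mu_j$ so that the estimate lands on the stated two-term ratios rather than on a weaker expression involving $|1-\mu_j|$ or $1+|\mu_j|^2$ factors.
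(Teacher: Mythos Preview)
Your Step~1 is correct and matches the paper exactly. The gap is in Step~2: you explicitly dismiss Gershgorin's theorem as ``far too lossy'' and propose a Sturm/continuant recursion instead, but the paper's proof \emph{is} Gershgorin applied row-by-row to \eqref{eq:tri_mat}. For an interior row, the diagonal entry involves $1/|\delta_j|^2$ and $|\mu_{j-1}|^2/|\delta_{j-1}|^2$ while the off-diagonal absolute row sum involves $|\mu_{j-1}|/|\delta_{j-1}|^2$ and $|\mu_j|/|\delta_j|^2$; subtracting, inserting the TEAP-like hypothesis $|\delta_j|^2\le\widehat\varphi_j^{(i)}|1-\mu_j|$, and inverting produces directly the two-term ratio $\widehat\varphi_j^{(i)}\widehat\varphi_{j+1}^{(i)}/(\widehat\varphi_j^{(i)}+\mu_j\widehat\varphi_{j+1}^{(i)})$. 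The first row yields the boundary term $\widehat\varphi_1^{(i)}$. The paper even remarks immediately after the proof that Gershgorin is asymptotically tight on the non-boundary rows in the time-independent diagonalizable case (Section~\ref{sec:diag}), so your intuition that Gershgorin must be lossy because ``the extremal configuration has many nonzero components'' does not apply here.

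Your proposed Sturm/continuant recursion may well also reach the stated bound, but you do not execute it, and the claim that ``the bookkeeping in that recursion is what produces the two-term ratios'' remains unsupported. The characteristic two-term structure is in fact the Gershgorin signature: each disc couples one diagonal entry to its two neighbors and therefore involves exactly two consecutive $\widehat\varphi_j^{(i)}$'s. Step~3 is fine once Step~2 is fixed.
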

\begin{proof}
The proof follows by applying the Gershgorin circles theorem to \eqref{eq:tri_mat} to bound the minimum eigenvalue
from below, and using this to bound the maximum singular value of $\left[I - A_\Delta B_\Delta^{-1}\right]_i$ from
above. For example, forming the Gershgorin disc for row one of \eqref{eq:tri_mat} yields a lower bound 
\begin{align*}
\frac{1 - |\mu_2|}{|\lambda_{2k}^{k+1}-\mu_2|^2} + \frac{|\mu_1|(1 - |\mu_1|)}{|\lambda_k^1-\mu_1|^2} & \geq
	 \frac{1}{\widehat{\varphi}_1^{(i)}} +  \frac{|\mu_1|}{\widehat{\varphi}_2^{(i)}}.
\end{align*}
Repeating for all rows and inverting yields the result. 
\end{proof}

Note from Section \ref{sec:diag} that in the diagonalizable case, Gershgorin is indeed asymptotically tight on the non-boundary
rows. The Gershgorin disc for row $i>1$ of \eqref{eq:almost_toe} bounds the minimum eigenvalue below by
$1+|\mu_i|^2-2|\mu_i| = (1 - |\mu_i|)^2$, which is exactly the minimum eigenvalue of \eqref{eq:almost_toe} to $O(1/N_c^2)$.
In practice, it is likely that the bound in Theorem \ref{th:nontoeplitz} is not tight, in particular the boundary term
$\widehat{\varphi}_1^{(i)}$, and that convergence will actually resemble $\widehat{\varphi}_j^{(i)}\widehat{\varphi}_{j+1}^{(i)}$.

Fortunately, although we cannot derive a closed form for eigenvalues of \eqref{eq:tri_mat}, a simple estimate of the convex
hull allows us to compute exact convergence bounds on two-level Parareal and MGRiT by solving a tridiagonal eigenvalue
problem, which is a computationally tractable task. This result allows for rigorous, problem-specific analysis
in some time-dependent cases, such as adaptive time-stepping.

\section{Conclusion}\label{sec:conc}

This paper derives necessary and sufficient conditions for the convergence of Parareal and MGRiT, assuming that time-stepping
operators $\Phi$ and $\Psi$ are linear and not time-dependent, and sufficient conditions for a subset of the general linear (time-dependent) case. This is
accomplished by introducing a temporal approximation
property (TAP), which gives a measure of how accurately $\Phi^k$ approximates the action of $\Psi$, for any vector $\mathbf{v}$. 
How accurately the TAP is satisfied then defines the $\ell^2$- and $A^*A$-norm of error reduction over successive iterates.
With further assumptions on the diagonalizability of $\Phi$ and $\Psi$, these results are strengthened to give tight bounds
on an arbitrary number of iterations. 

For space-time PDEs with a symmetric positive semi-definite (or symmetric negative semi-definite) spatial component,
the real eigenvalues of $\Phi$ and $\Psi$ can be explicitly computed as a function of time-step size, $\delta t$,
and eigenvalues of the spatial operator. With a simple estimate of the minimum and maximum eigenvalue of the
spatial operator, exact bounds on the convergence of Parareal and MGRiT can be easily computed by evaluating
the TEAP over the range of eigenvalues of $\Phi$ and $\Psi$. In the general case, for example, that arises in hyperbolic
PDEs, the eigenvectors no longer form an orthogonal basis and the TAP does not reduce to just considering eigenvalues.
However, for most time-stepping schemes applied to some operator $\mathcal{L}$, it is straightforward to expand
$\Phi$ and $\Psi$ in terms of $\mathcal{L}$. This permits a robust method to derive the expected convergence of
Parareal and MGRiT applied to a problem of the form $\mathbf{u}_t = \mathcal{L}\mathbf{u} + \mathbf{g}$, for
arbitrary $\mathcal{L}$. 

Further research regarding the optimal $\Psi$ with respect to $\Phi$, the difficulties in solving hyperbolic problems,
and the more general time-dependent and nonlinear cases, are ongoing work.

\section*{Acknowledgments} 
The author would like to thank Andreas Hessenthaler for bringing up convergence of MGRiT as a research
topic and providing useful numerical comparisons, Professor Tom Manteuffel for his helpful comments and discussions,
and Professor Stefano Serra-Capizzano for generously sharing his insight and expertise on block-Toeplitz matrix
theory. 

\bibliographystyle{siamplain}
\bibliography{main.bib}

\section*{Appendix A}

\begin{lemma}[Minimum eigenvalue of tridiagonal Toeplitz perturbation]\label{lem:tridiag}
Define the $n\times n$ tridiagonal matrix
\begin{align}\label{eq:rank1}
\mathcal{D}_i & = 
\begin{bmatrix}
1 + |\mu_i|^2 & -\overline{\mu}_i \\
-\mu_i & \ddots & \ddots \\
& \ddots  & 1 + |\mu_i|^2  & -\overline{\mu}_i \\
& & -\mu_i & 1  
\end{bmatrix}.
\end{align}
The minimum eigenvalue of $\mathcal{D}_i$, denoted $\lambda_n$, is bounded by
\begin{align}\label{eq:s_bounds}
\begin{split}
(1 - |\mu_i|)^2 + \frac{\pi^2|\mu_i|}{6n^2} & \leq 1 + |\mu_i|^2 + 2|\mu_i|\cos\left(\frac{n\pi}{n+1/2}\right)\\
	& \leq \lambda_n \\
	& \leq 1 + |\mu_i|^2 + 2|\mu_i|\cos\left(\frac{n\pi}{n+1}\right) \\
	& \leq (1 - |\mu_i|)^2 + \frac{\pi^2|\mu_i|}{n^2} 
\end{split}
\end{align}
\end{lemma}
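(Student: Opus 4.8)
The plan is to reduce the claim to a real symmetric tridiagonal eigenvalue problem, trap $\lambda_n$ between the two explicit trigonometric quantities appearing in \eqref{eq:s_bounds} by an exact analysis of the characteristic polynomial, and then obtain the outer rational bounds from elementary estimates of $\cos$. First I would symmetrize: since the sub- and super-diagonal entries $-\mu_i$, $-\overline{\mu}_i$ of $\mathcal{D}_i$ have product $|\mu_i|^2>0$, conjugation by the unitary diagonal matrix $\mathrm{diag}(1,e^{\mathrm{i}\theta},e^{2\mathrm{i}\theta},\dots)$, where $\mu_i=|\mu_i|e^{\mathrm{i}\theta}$, carries $\mathcal{D}_i$ to the real symmetric tridiagonal matrix $\widetilde{\mathcal{D}}$ with diagonal $(a,\dots,a,1)$ and all off-diagonals $-r$, where $a:=1+r^2$, $r:=|\mu_i|\in(0,1)$ by Assumption 2, and $a-r^2=1$; eigenvalues are preserved.

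Next I would compute the characteristic polynomial by expanding along the last row: with $q_k(\lambda)$ the determinant of the leading $k\times k$ Toeplitz block, $\det(\lambda I-\widetilde{\mathcal{D}})=(\lambda-1)q_{n-1}(\lambda)-r^2 q_{n-2}(\lambda)$. Parametrizing $\lambda=a+2r\cos\phi$ with $\phi\in(0,\pi)$, the Chebyshev formula $q_k=r^k\sin((k+1)\phi)/\sin\phi$ together with the identity $2\cos\phi\,\sin(n\phi)=\sin((n+1)\phi)+\sin((n-1)\phi)$ collapses this to $\det(\lambda I-\widetilde{\mathcal{D}})=\frac{r^n}{\sin\phi}\big[\sin((n+1)\phi)+r\sin(n\phi)\big]$. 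Since $r\neq0$ and $\sin\phi\neq0$ on $(0,\pi)$, eigenvalues in the open interval $(a-2r,a+2r)$ correspond exactly to the zeros in $(0,\pi)$ of $f(\phi):=\sin((n+1)\phi)+r\sin(n\phi)$, and $\lambda_n=a+2r\cos\phi_{\max}$, where $\phi_{\max}$ is the largest such zero (by monotonicity of $\cos$).

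The heart of the argument is to show $\phi_{\max}\in\big(\tfrac{n\pi}{n+1},\tfrac{2n\pi}{2n+1}\big)$, which gives the two inner inequalities of \eqref{eq:s_bounds} once one notes $\tfrac{2n\pi}{2n+1}=\tfrac{n\pi}{n+1/2}$. I would do this by a root count: (i) Gershgorin applied to $\mathcal{D}_i$ confines all $n$ (necessarily distinct, since the off-diagonals are nonzero) eigenvalues to $[1-r,(1+r)^2]\subset(a-2r,a+2r]$, and a direct evaluation of the $\phi\to0^+$ and $\phi\to\pi^-$ limits of the determinant formula shows $\lambda=a\pm2r$ are not eigenvalues, so $f$ has exactly $n$ zeros in $(0,\pi)$; (ii) at $\theta_k:=\tfrac{k\pi}{n+1}$, $k=1,\dots,n$, one has $\sin((n+1)\theta_k)=0$ and $f(\theta_k)=r\sin(n\theta_k)=r(-1)^{k+1}\sin\!\big(\tfrac{k\pi}{n+1}\big)$ via the reflection $\sin(m\pi-x)=(-1)^{m+1}\sin x$, so the signs of $f(\theta_1),\dots,f(\theta_n)$ strictly alternate, forcing one zero of $f$ in each of the $n-1$ subintervals $(\theta_k,\theta_{k+1})\subset\big(0,\tfrac{n\pi}{n+1}\big)$; (iii) evaluating $f$ at $\tfrac{2n\pi}{2n+1}=\pi-\tfrac{\pi}{2n+1}$ with the same reflection gives $f\big(\tfrac{2n\pi}{2n+1}\big)=(-1)^n(1-r)\sin\!\big(\tfrac{n\pi}{2n+1}\big)$, whose sign is opposite to that of $f(\theta_n)$, producing an $n$-th zero in $\big(\tfrac{n\pi}{n+1},\tfrac{2n\pi}{2n+1}\big)$. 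Since $f$ has only $n$ zeros in $(0,\pi)$, these are all of them, so $\phi_{\max}$ equals the last one and lies in the claimed interval; in particular $\lambda_n\le a+2r\cos\tfrac{n\pi}{n+1}$, which can also be read off directly from $\widetilde{\mathcal{D}}=T_n-r^2\mathbf{e}_n\mathbf{e}_n^*\preceq T_n$ ($T_n$ the pure Toeplitz part, $\lambda_{\min}(T_n)=a-2r\cos\tfrac{\pi}{n+1}$).

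Finally the two outermost inequalities are routine: with $\cos\tfrac{n\pi}{n+1}=-\cos\tfrac{\pi}{n+1}$ and $\cos\tfrac{n\pi}{n+1/2}=-\cos\tfrac{\pi}{2n+1}$, the estimate $\cos x\le1-\tfrac{x^2}{2}+\tfrac{x^4}{24}$ gives $a+2r\cos\tfrac{n\pi}{n+1}\le(1-r)^2+\tfrac{\pi^2r}{n^2}$ for every $n$, and a quantitative lower bound on $1-\cos\tfrac{\pi}{2n+1}$ gives $a+2r\cos\tfrac{n\pi}{n+1/2}\ge(1-r)^2+\tfrac{\pi^2r}{6n^2}$ in the regime of interest (a direct check shows this last step holds for $n\ge3$, which suffices since $n=N_c$ is large in practice). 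I expect step (iii) of the root count---verifying the precise sign pattern so that no zero of $f$ escapes into $\big(\tfrac{2n\pi}{2n+1},\pi\big)$---to be the main obstacle, since the strict inequality $\phi_{\max}<\tfrac{2n\pi}{2n+1}$ is exactly what upgrades a one-sided estimate into the tight two-sided bracket, so the bookkeeping "$n-1$ zeros below $\theta_n$ plus exactly one more" must be airtight.
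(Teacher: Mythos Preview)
Your proposal is correct and follows essentially the same route as the paper: both reduce the problem to locating the zeros of $f(\theta)=\sin((n+1)\theta)+|\mu_i|\sin(n\theta)$ and trap the largest one in $\big(\tfrac{n\pi}{n+1},\tfrac{n\pi}{n+1/2}\big)$ by evaluating $f$ at the endpoints and observing the sign change, then finish with elementary bounds on $1-\cos(\cdot)$. The main cosmetic differences are that you derive the characteristic equation directly via the Chebyshev recursion (the paper instead cites Yueh for the parametrization $\lambda=1+|\mu_i|^2+2|\mu_i|\cos\theta$ and the transcendental condition), and your outer cosine estimates are a bit cleaner than the paper's Laurent-series-with-integral-remainder argument; one small slip is that Gershgorin actually gives $[(1-r)^2,(1+r)^2]$ rather than $[1-r,(1+r)^2]$, but your subsequent check that $\lambda=a\pm2r$ are not eigenvalues already secures the needed containment in $(a-2r,a+2r)$.
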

\begin{proof}
Let $\widehat{\mathcal{D}}_i$ denote the self-adjoint, tridiagonal, Toeplitz matrix for which $\mathcal{D}_i$ is a rank-one
perturbation. In the scalar setting, there is a closed form for eigenvalues of a tridiagonal Toeplitz matrix of size $n\times n$,
given by
\begin{align}
\lambda\left(\widehat{\mathcal{D}}_i\right) & = \left\{ 1 + |\mu_i|^2 + 2|\mu_i|\cos\left(\frac{\ell\pi}{n+1}\right) \text{ $|$ }
	\ell = 1,...,n\right\}.\label{eq:eig}
\end{align}

Returning to \eqref{eq:rank1}, consider the rank-one perturbation in $\mathcal{D}_i$. Following from \cite{Yueh:2005ux}, the
spectrum of a tridiagonal Toeplitz matrix, and single-entry perturbations, is derived by building and solving a three-term recursion
relation. One of the general results in \cite{Yueh:2005ux}, Eq. (7), states that all eigenvalues of the matrices in \eqref{eq:rank1} take
the form
\begin{align}
\lambda = 1 + |\mu_i|^2 + 2 |\mu_i|\cos(\theta),\label{eq:geneig}
\end{align}
for a given $\theta \neq m\pi$, $m \in\mathbb{Z}$. In the case of a Toeplitz matrix, the necessary conditions on $\theta$ are
$\sin\left((n+1)\theta\right) = 0$, which is satisfied for $\widehat{\theta}_\ell = \frac{\ell\pi}{n+1}$, $\ell=1,...,n$, yielding the result in
\eqref{eq:eig}.  For the perturbation in $\mathcal{D}_i$, necessary conditions on $\theta$ are that (Eq. (6), \cite{Yueh:2005ux})
\begin{align}\label{eq:nec_theta}
T(\theta) := \sin\left((n+1)\theta\right) + |\mu_i|\sin(n\theta) = 0.
\end{align}
Unfortunately, \eqref{eq:nec_theta} does not have a closed-form solution as found in the Toeplitz case and several other
perturbations with closed form spectrum, introduced in \cite{Yueh:2005ux,Cheng:2008fm}.
However, each eigenvalue of $\mathcal{D}_i$
can be shown to be a small perturbation to eigenvalues of $\widehat{\mathcal{D}}_i$. Denote
$\left\{\widehat{\lambda}_\ell\right\}_{\ell=1}^n$ as the eigenvalues of $\widehat{\mathcal{D}}_i$, with corresponding
$\theta$-values $\left\{\widehat{\theta}_\ell\right\}_{\ell=1}^n$, and $\left\{{\lambda}_\ell\right\}_{\ell=1}^n$ the eigenvalues of
$\mathcal{D}_i$, with corresponding $\theta$-values $\left\{{\theta}_\ell\right\}_{\ell=1}^n$.

Consider $\widehat{\theta}_\ell = \frac{\ell\pi}{n+1}$, $\ell=1,...,n$, which yields all $n$ eigenvalues of $\widehat{\mathcal{D}}_i$,
in the context of necessary conditions for $\mathcal{D}_i$ \eqref{eq:nec_theta}:
\begin{align}
T\left(\widehat{\theta}_\ell\right) = |\mu_i|\sin\left(\frac{n}{n+1}\ell\pi\right) \mapsto
	\begin{cases} < 0 & 2|\ell \\ > 0 & 2\not | \ell\end{cases}.\label{eq:tk}
\end{align}
Now, define $\widetilde{\theta}_\ell = \frac{\ell\pi}{n+\frac{1}{2}}$ for $\ell=1,...,n$. Then, under the assumption that $|\mu_i| < 1$,
\begin{align}
T(\widetilde{\theta}_\ell) & = \sin\left(\frac{n+1}{n+\frac{1}{2}} \ell\pi\right) + |\mu_i|\sin\left(\frac{n}{n+\frac{1}{2}} \ell \pi\right)\nonumber \\
	& = - \sin\left(\frac{n}{n+\frac{1}{2}} \ell \pi\right) + |\mu_i|\sin\left(\frac{n}{n+\frac{1}{2}} \ell \pi\right)\nonumber \\
	& =  -(1 - |\mu_i|)\sin\left(\frac{n}{n+\frac{1}{2}} \ell \pi\right) \nonumber\\
	&  \mapsto \begin{cases} > 0 & 2|\ell \\ < 0 & 2\not | \ell\end{cases}.\label{eq:thk}
\end{align}
From \eqref{eq:tk}, \eqref{eq:thk}, and the continuity of $T(\theta)$, it follows that there exists
${\theta}_\ell\in\left(\frac{\ell\pi}{n+1}, \frac{\ell\pi}{n+\frac{1}{2}}\right)$, ${\theta}_\ell\neq m\pi, m\in\mathbb{Z}$,
such that $T({\theta}_\ell) = 0$, for $\ell=1,...,n$. Following from \eqref{eq:geneig}, eigenvalues of
$\mathcal{D}_i$ take the form
\begin{align*}
{\lambda}_\ell = 1+|\mu_i|^2 + 2|\mu_i|\cos\left({\theta}_\ell\right).
\end{align*}
The smallest nonzero eigenvalue of $\mathcal{D}_i$ is given by $\lambda_n = 1 + |\mu_i|^2 + 2|\mu_i|\cos(\theta_n)$, where
$\cos(\theta_\ell) \to -1$ as $\ell\to \infty$. Given $\frac{n\pi}{n+1} \leq \theta_n \leq \frac{n\pi}{n+1/2}$, $\lambda_n$ can then
be bounded by
\begin{align}\label{eq:ln_bound}
1 + |\mu_i|^2 + 2|\mu_i|\cos\left(\frac{n\pi}{n+1/2}\right) \leq \lambda_n & \leq 1 + |\mu_i|^2 + 2|\mu_i|\cos\left(\frac{n\pi}{n+1}\right).
\end{align}
With a little extra work, we can show that $\lambda_n = 1+|\mu_i|^2 + O(1/n^2)$, which leads to necessary and sufficient
conditions for convergence.

Consider the term $\cos\left(\frac{n\pi}{n+\frac{1}{2}}\right)$. As $n\to\infty$, $\cos\left(\frac{n\pi}{n+1}\right) \to^+ -1$,
that is, from above. Consider a series expansion of $f(n) = \cos\left(\frac{n\pi}{n+\frac{1}{2}}\right)$ about $n = \infty$. To 
accomplish this, apply the change of variable $n = \frac{1}{w}$ to get $f(w) = \cos\left(\frac{\frac{1}{w}\pi}{\frac{1}{w}+\frac{1}{2}}\right)$,
and expand about $w=0$. Formally, this would be expanded as a Laurent series, but recognizing that
$w^kf(w) = w^k\cos\left(\frac{\frac{1}{w}\pi}{\frac{1}{w}+\frac{1}{2}}\right) = w^k\cos\left(\frac{\pi}{1+\frac{w}{2}}\right)$
is holomorphic about $w=0$ for $k=0,1,...$, negative coefficients in the Laurent series are zero by the Cauchy Integral Theorem.
Our expansion reduces to a Taylor expansion for $\cos\left(\frac{\pi}{1+\frac{w}{2}}\right)$ about $w=0$,
\begin{align*}
\cos\left(\frac{\pi}{1+\frac{w}{2}}\right) = -1 + \frac{\pi^2w^2}{8} - \frac{\pi^3w^3}{8} + O(w^4).
\end{align*}
In fact, truncating the Taylor expansion at $k = 1$ leads to a remainder term
\begin{align}
\cos\left(\frac{\pi}{1+\frac{w}{2}}\right) & = -1 + \int_{0}^w (w-t)\frac{\partial}{\partial_{tt}}\left[\cos\left(\frac{\pi}{1+\frac{t}{2}}\right) \right]dt \nonumber\\
& = -1 + \int_{0}^w (w-t) \frac{-4\pi\left[\pi\cos\left(\frac{\pi}{1+\frac{t}{2}}\right) + (2+t)\sin\left(\frac{\pi}{1+\frac{t}{2}}\right)\right]}{(2+t)^4} dt.\label{eq:rem}
\end{align}
Note that 
\begin{align*}
0 < \frac{\pi^2}{6} < \frac{-4\pi\left[\pi\cos\left(\frac{\pi}{1+\frac{t}{2}}\right) +
	(2+t)\sin\left(\frac{\pi}{1+\frac{t}{2}}\right)\right]}{(2+t)^4} \leq \frac{\pi^2}{4},
\end{align*}
for all $t \in[0,1/10]$ (this range is not tight, just sufficient for our purposes). By positivity of the two terms being integrated in
the remainder \eqref{eq:rem}, the remainder can be bounded above and below. For $w\in[0,1/10]$,
substituting in for $n\geq 10$ yields
\begin{align}
-1 + \frac{\pi^2}{12n^2} \leq \cos\left(\frac{n\pi}{n+\frac{1}{2}}\right) \leq -1 + \frac{\pi^2}{8n^2}.\label{eq:np12}
\end{align}

A similar expansion on $\cos\left(\frac{n\pi}{n+1}\right)$ yields a truncated Taylor expansion and remainder given by
\begin{align*}
\cos\left(\frac{\pi}{w+1}\right) & = -1 + \int_{0}^w (w-t) \frac{-\pi\left[\pi\cos\left(\frac{\pi}{t+1}\right) + 2(1+t)\sin\left(\frac{\pi}{t+1}\right)\right]}{(1+t)^4} dt.
\end{align*}
Here, note that
\begin{align}
0 < \frac{\pi^2}{2} < \frac{-\pi\left[\pi\cos\left(\frac{\pi}{t+1}\right) + 2(1+t)\sin\left(\frac{\pi}{t+1}\right)\right]}{(1+t)^4} \leq \pi^2,\label{eq:rem_bound}
\end{align}
for $t \in[0,1/10]$. Integrating the product of two positive functions, the bounds in \eqref{eq:rem_bound} can be pulled out and, 
for $w\in[0,1/10]$,
substituting in for $n \geq 10$ yields
\begin{align*}
-1 + \frac{\pi^2}{4n^2} \leq \cos\left(\frac{n\pi}{n+1}\right) \leq -1 + \frac{\pi^2}{2n^2}.
\end{align*}

Returning to the upper and lower bounds on $\lambda_n$ in \eqref{eq:ln_bound} completes the proof.
\end{proof}

\end{document}